%Typeset with LaTex2e
%Preamble
%
%Style section
\documentclass[12pt]{amsart}
\usepackage{amssymb}
\usepackage{enumerate}

        \usepackage{graphicx}
        \usepackage[usenames,dvipsnames,svgnames,table]{xcolor}
        \usepackage[a4paper]{geometry}

\textwidth144mm
\textheight240mm
\oddsidemargin7.5mm

\evensidemargin7.5mm
\topmargin-6mm
\parskip4pt plus2pt minus2pt
\parindent0mm

    % uncomment if necessary
    %\usepackage{amssymb}
    % \usepackage{epsfig}
    %\usepackage{graphicx}
%\renewcommand{\baselinestretch}{1.5}

\newcommand{\N}{{\mathbb N}}
\newcommand{\C}{{\mathbb C}}

\newcommand{\wt}{\widetilde}
\newcommand{\wh}{\widehat}
\newcommand{\wand}{wandering domain}

\newcommand{\tef}{transcendental entire function}

\newcommand{\nhd}{neighbourhood}
\newcommand{\sconn}{simply connected}
\newcommand{\mconn}{multiply connected}

\newcommand{\sw}{spider's web}

\newcommand{\spl}{strongly polynomial-like}
\newcommand\qfor{\;\;\text{for }}

\newcommand\qas{\quad\text{as }}
\newcommand\qand{\quad\text{and}\quad}

\newcommand{\kfc}{I^{\!+\!}(f)}

\newcommand{\vfp}{V^{\!+\!}(f)}
\newcommand{\vfpr}{V_R^{\!+\!}(f)}

\newcommand{\ifm}{I^{\!+\!}(f, (m^n(r)))}
\newcommand{\kfm}{K(f, (m^n(r)))}
\newcommand{\ifmd}{I^{\!+\!}(f, (m^n(r')))}
\newcommand{\kfmd}{K(f, (m^n(r')))}
\newcommand{\ifmh}{\bigcup_{\ell \in \N_0} f^{- \ell}(\ifm)}
\newcommand{\mt}{\widetilde{m}}

%Declaration section

\theoremstyle{plain}
\newtheorem{theorem}{Theorem}[section]

\newtheorem*{theorem*}{Theorem}
\newtheorem*{proposition*}{Proposition}

%\declaretheorem[numbered = no, name = Straightening Theorem]{straighten}

\newtheorem{lemma}[theorem]{Lemma}
\theoremstyle{definition}

\theoremstyle{remark}
\newtheorem*{remark*}{Remark}
\newtheorem*{remarks*}{Remarks}

\theoremstyle{problem}

\theoremstyle{example}
\newtheorem{example}[theorem]{Example}
\newtheorem*{example*}{Example}
\theoremstyle{question}
\newtheorem{question}[theorem]{Question}
\theoremstyle{questions}
\newtheorem*{questions*}{Questions}

{\begin{list}{}%
         {\setlength{\leftmargin}{#1}}%
         \item[]%
}
{\end{list}}

\begin{document}
%Topmatter

%Two authors

\title[Iterating the minimum modulus]{The iterated minimum modulus \\and conjectures of Baker and Eremenko}

\author{J.W. Osborne, P.J. Rippon \and G.M. Stallard}
\address{Department of Mathematics and Statistics \\
   The Open University \\
   Walton Hall\\
   Milton Keynes MK7 6AA\\
   UK}
\email{john.osborne@open.ac.uk, phil.rippon@open.ac.uk, gwyneth.stallard@open.ac.uk}

%\thanks{The second author <... thanks>}

%End Two authors

%\keywords{}
\thanks{2010 {\it Mathematics Subject Classification.}\; Primary 37F10, Secondary 30D05.\\The last two authors were supported by the EPSRC grant EP/K031163/1.}
%\subjclass{30D05, 37F10}
%\date{1999}

%End topmatter

\begin{abstract}
In transcendental dynamics significant progress has been made by studying points whose iterates escape to infinity at least as fast as iterates of the maximum modulus. Here we take the novel approach of studying points whose iterates escape at least as fast as iterates of the {\it minimum} modulus, and obtain new results related to Eremenko's conjecture and Baker's conjecture, and the rate of escape in Baker domains. To do this we prove a result of wider interest concerning the existence of points that escape to infinity under the iteration of a positive continuous function.
\end{abstract}
\maketitle
\begin{center}
{\it For Walter Hayman on the occasion of his ninetieth birthday.}\\
\end{center}

\section{Introduction}
\label{intro}
\setcounter{equation}{0}

Denote the $ n $th iterate of a function $ f $ by $ f^n $, for $ n \in \N$. If $f$ is a transcendental entire function then the \textit{Fatou set} $ F(f) $ is the set of points $ z \in \C $ such that the family of functions $ \lbrace f^n : n \in \N \rbrace $ is normal in some \nhd\ of $ z $ and the \textit{Julia set} $ J(f) $ is the complement of $ F(f) $. We refer to \cite{aB, wB93, CG, Mil} for the fundamental properties of these sets and an introduction to complex dynamics.

The \emph{escaping set} $ I(f) = \lbrace z \in \C : f^n(z) \to \infty \text{ as } n \to \infty \rbrace $ was first studied for a general \tef\ $ f $ by Eremenko \cite{E}. In recent years, the \emph{fast escaping set} $ A(f) $ has played a significant role in transcendental dynamics, for example, in progress on \emph{Eremenko's conjecture}, that all the components of $ I(f) $ are unbounded, and on \emph{Baker's conjecture}, that if $ f $ has order at most~$ 1/2, $ minimal type, then all the components of $ F(f) $ are bounded. Despite many partial results, both conjectures remain open.

%We refer to Sections \ref{Functions} and \ref{setV} for definitions of the terminology used here and further discussion of these conjectures.

The set $A(f)$ was introduced in~\cite{BH99} and consists of those points whose iterates under $ f $ eventually grow at least as fast as iterates of the maximum modulus, $ M(r) = \max_{|z|=r} |f(z)| $. It can be defined as follows:
\[
A(f) = \bigcup_{\ell \in \N_0} f^{-\ell} (A_R(f)), \, \, \textrm{where} \, A_R(f) = \{ z: |f^n(z)| \geq M^n(R), \textrm{ for } n \in \N\}.
\]
Here  $ \N_0 = \N \cup \{0\}, $  $ M^n(r) $ denotes the $ n $th iterate of the function $ r \mapsto M(r) $, and $ R > 0 $ is such that $ M^n(R) \to \infty $ as $ n \to \infty. $ Note that there always exists $ R>0 $ such that, for $ r \geq R, $ we have $ M(r)>r $ and hence $ M^n(R) \to \infty $ as $ n \to \infty $, and the definition of $ A(f) $ is independent of the choice of such an $ R $.

The set $A(f)$ has many strong properties \cite{RS05, RS10a} and was used in \cite{RS14} to show that the escaping set $I(f)$ is either connected or has infinitely many unbounded components. See \cite{R07} and \cite{R3S} for other partial results on Eremenko's conjecture.

In this paper, we study those points whose iterates under $ f $ eventually grow at least as fast as iterates of the \emph{minimum} modulus, $  m(r) = \min_{|z|=r} |f(z)| $. Replacing $ M(r) $ by $ m(r) $ in the definition of $ A(f) $ does not in general yield a subset of $I(f)$.  Indeed, if the function $ m(r) $ is bounded, then its iterates tell us nothing about~$ I(f) $; this is the case, for example, when $ f $ is in the Eremenko-Lyubich class $\mathcal{B}$ of \tef s with a bounded set of singular values (that is, critical values and asymptotic values).

It turns out, however, that iterating the minimum modulus is of significant interest for the many entire functions with the property that
\begin{equation}\label{minmodprop}
\text{there exists } r > 0 \text{ with } m^n(r) \to \infty \text{ as } n \to \infty.
\end{equation}
We introduced condition (\ref{minmodprop}) in \cite{ORS} in the context of investigating the connectedness properties of the set $ \kfc $ of points at which the iterates of $ f $ form an unbounded sequence.  In this paper, we make a deeper study of the condition~\eqref{minmodprop}, and in this way obtain new results related to Eremenko's conjecture and Baker's conjecture, and about the rate of escape in Baker domains (defined in Section~\ref{unbFat}).

In order to work with~\eqref{minmodprop} and, in particular, to identify \tef s for which (\ref{minmodprop}) holds, it is useful to introduce the maximal function
\[ \mt(r) := \max_{0 \leq s \leq r} m(s), \qfor r \in [0, \infty).\]
We show that~(\ref{minmodprop}) is true if and only if
\begin{equation}\label{mtequiv}
\text{there exists } R > 0 \text{ such that } \mt(r) >r, \text{ for } r \geq R.
\end{equation}
In fact, in Section~\ref{Real} we prove a general result about escaping points of positive continuous functions, which is of wider interest.
%\begin{theorem}
%\label{equiv}
%Let $ \varphi: [0, \infty) \to [0, \infty) $ be continuous and define $\widetilde{\varphi}$ by
%\[ \widetilde{\varphi}(t) := \max_{0 \leq s \leq t} \varphi(s), \qfor t \in [0, \infty).\]
%Then the following statements are equivalent.
%\begin{enumerate}[(a)]
%\item There exists $ t > 0 $ with $ \varphi^n(t) \to \infty $ as $ n \to \infty. $
%\item There exists $ T>0 $ such that $ \widetilde{\varphi}(t) > t $, for $ t \geq T $.
%\end{enumerate}
%\end{theorem}

Using the condition~\eqref{mtequiv} and several classical results about the size of the minimum modulus, we are able to show that condition (\ref{minmodprop}) holds for many classes of entire functions.  The terminology used  in the following result  is explained in Section~\ref{Functions}.
\begin{theorem}
\label{funcs}
Let $ f $ be a \tef. Then (\ref{minmodprop}) holds if
\begin{enumerate}[(a)]
\item $ f $ is of order less than $ 1/2 $, or
\item $ f $ has finite order and Fabry gaps, or
\item $ f $ has Hayman gaps, or
\item $ f $ exhibits the pits effect, as defined by Littlewood and Offord, or
\item $ f $ has a multiply connected Fatou component.
\end{enumerate}
\end{theorem}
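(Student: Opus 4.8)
The key tool is the equivalence of \eqref{minmodprop} with \eqref{mtequiv}, which reduces everything to showing that $\mt(r) > r$ for all sufficiently large $r$, equivalently that there exist arbitrarily large values of $r$ with $m(r) > r$. So in each case (a)--(e) the strategy is to invoke a classical lower bound on the minimum modulus $m(r) = \min_{|z|=r}|f(z)|$ that holds on a sufficiently large set of radii, and check that this bound eventually exceeds $r$.

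For \textbf{(a)}, I would use the classical $\cos\pi\rho$ theorem (Wiman, or the Hadamard--Borel--Carath\'eodory circle of ideas): if $f$ has order $\rho < 1/2$ then $\log m(r) \sim \cos(\pi\rho)\log M(r)$ along a sequence $r_n \to \infty$ (indeed outside a set of finite logarithmic measure), and since $\log M(r)/\log r \to \infty$ for a transcendental entire function while $\cos\pi\rho > 0$, we get $m(r_n) > r_n$ for large $n$. More precisely one should cite the form of the $\cos\pi\rho$ theorem giving $\log m(r) > (\cos\pi\rho - \eps)\log M(r)$ off a small exceptional set, then pick a large $r$ avoiding that set. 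For \textbf{(b)}, functions with Fabry gaps satisfy $\log m(r) \sim \log M(r)$ outside a set of density zero (a theorem in the Fabry--P\'olya--Fuchs line, e.g.\ via the work on gap series), so $m(r) > r$ for suitable large $r$; the finite-order hypothesis is there to make the relevant gap theorem applicable. For \textbf{(c)}, Hayman gaps are precisely the gap condition under which Hayman proved $\log m(r) \sim \log M(r)$ as $r \to \infty$ (no exceptional set needed), so the conclusion is immediate. For \textbf{(d)}, the pits effect of Littlewood and Offord gives, by definition, that outside a small union of ``pits'' one has $|f(z)|$ comparable to $M(|z|)$; in particular on a large circle avoiding the pits $m$-type estimates hold and $m(r) > r$ eventually.

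For \textbf{(e)}, I would appeal to the structure theory of multiply connected Fatou components of transcendental entire functions: by results of Baker and of Bergweiler--Rippon--Stallard, if $f$ has a multiply connected Fatou component $U$ then $U$ is bounded, $f^n \to \infty$ locally uniformly on $U$, and for large $n$ the component $U_n$ containing $f^n(U)$ is a ``round'' annulus-like region surrounding $0$ whose inner and outer boundary radii $r_n, R_n$ satisfy $R_n/r_n \to \infty$; moreover $f$ maps a circle $|z| = \rho$ inside $U_n$ onto a curve winding around a much larger circle, which forces $m(\rho) \geq c R_{n+1} \gg \rho$ for appropriate $\rho$. Thus one obtains arbitrarily large $r$ with $m(r) > r$, giving \eqref{minmodprop}.

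\textbf{Main obstacle.} The routine parts are the order-less-than-$1/2$ case and the gap cases, which are direct citations once the equivalence \eqref{mtequiv} is in hand. The delicate point is case (e): one must extract from the (nontrivial) structure theory of multiply connected Fatou components a genuine lower bound on the \emph{minimum} modulus on a specific circle, not merely on the maximum modulus or on ``most'' of a circle; the argument needs the fact that $f$ restricted to a suitable circle inside $U_n$ has image surrounding a large circle, so that \emph{every} point of that image is large. Care is also needed in (d) to turn the probabilistic/typical "pits effect" description into an honest statement about $m(r)$ for a concrete large $r$, i.e.\ to locate a circle of the right radius that misses the pits.
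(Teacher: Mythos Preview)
Your plan is essentially the paper's: for (a)--(d) one quotes a classical lower bound of the form $\log m(r)>c\log M(r)$ on a large set of radii, and then uses the equivalence of \eqref{minmodprop} with \eqref{mtequiv} (the paper packages this via the intermediate Lemma~\ref{suffcon}, which asks for $m(s)\ge M(r)$ for some $s\in(r,r^C)$, but that is immediate from the same estimates). One factual slip: Hayman's gap theorem does \emph{not} eliminate the exceptional set; it gives the same conclusion as Fuchs' theorem (an exceptional set of zero logarithmic density) but for functions of arbitrary order under the stronger gap hypothesis. This does not affect your argument, since you only need one good radius in each interval.

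For (e) your route and the paper's diverge slightly. The paper does not go via $\mt(r)>r$ at all: it quotes a lemma from \cite{RS15} giving a sequence of round annuli $A'_n$ with $f(A'_n)\subset A'_{n+1}$ and inner radii tending to $\infty$, and then observes directly that if $r\in A'_0$ then $m^n(r)\in A'_n$, so $m^n(r)\to\infty$. Your proposal instead locates a full circle $\{|z|=\rho\}$ inside such an annulus and reads off $m(\rho)\ge r_{n+1}>\rho$. Both work and rest on the same structural input (that the $U_n$ eventually contain genuine round annuli, so a whole circle lies in the Fatou set and its image surrounds~$0$); the paper's version is a shade cleaner because it bypasses the $\mt$ equivalence, while yours has the virtue of treating all five cases uniformly through \eqref{mtequiv}. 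Your identification of the ``main obstacle'' in (e)---that one needs the image of an \emph{entire} circle to be large, not merely most of it---is exactly the point, and is precisely what the round-annulus structure supplies.
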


In Section~\ref{examples} we give several further examples of familiar entire functions that satisfy the condition \eqref{minmodprop}, such as $f(z)=2z(1+e^{-z})$.

It is reasonable to expect that, for functions satisfying (\ref{minmodprop}), the behaviour of $ m^n(r) $ will depend on the choice of $ r $.  Indeed, we shall see in Section~\ref{Real} that if \eqref{minmodprop} holds, then $ m^n(r) $ can tend to infinity arbitrarily slowly.

%\begin{theorem}
%\label{slow}
%Let $ f $ be a \tef.  If (\ref{minmodprop}) holds and $ a = (a_n) $ is a positive sequence such that $ a_n \to \infty $ as $ n \to \infty $, then there exist $ r_a > 0 $ and $ N_a \in \N $ such that $ m^n(r_a) \to \infty $ as $ n \to \infty $ and
%\[ m^n(r_a) \leq a_n,  \quad \textrm{  for  } n \geq N_a.\]
%\end{theorem}

In contrast, it is clear that the \emph{fastest} rate at which $ m^n(r) $ can tend to infinity must be related to the growth of $ \mt^n(r) $; indeed, this rate is always attained.

\begin{theorem}
\label{fast}
Let $ f $ be a \tef\ such that (\ref{minmodprop}) holds. Then there  exists  $R>0$ such that
\begin{equation}\label{mtcond1}
\mt^n(R) \to \infty \textrm{ as } n \to \infty,
\end{equation}
 and, for any such $R$, there exists  $ r \geq R $ such that
\begin{equation}
\label{mtcond}
 m^n(r) \geq \mt^n(R), \textrm{ for } n \in \N.
\end{equation}
Moreover, for every $ r $ and $ R $ satisfying  (\ref{mtcond1}) and (\ref{mtcond}),  the  set
\begin{align*}
\bigcup_{\ell \in \N_0} f^{-\ell} \left( \{ z: |f^n(z)| \geq \mt^n(R), \textnormal{ for } n \in \N\} \right)
\end{align*}
is independent of the choice of $R$ and is equal to the set
\begin{align*}
\bigcup_{\ell \in \N_0} f^{-\ell} \left( \{ z: |f^n(z)| \geq m^n(r), \textnormal{ for } n \in \N\} \right).
\end{align*}
\end{theorem}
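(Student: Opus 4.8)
The plan is to treat the three assertions in turn, the middle one being the crux. For~\eqref{mtcond1}: by the equivalence of~\eqref{minmodprop} and~\eqref{mtequiv} there is $R_0>0$ with $\mt(r)>r$ for all $r\ge R_0$, and since $\mt$ is continuous and non-decreasing — being the running maximum of the continuous function $m$ — the sequence $\bigl(\mt^n(R_0)\bigr)$ is strictly increasing and cannot converge (a limit would be a fixed point of $\mt$ in $[R_0,\infty)$), so it tends to infinity. Conversely, if $\mt^n(R)\to\infty$ but $\mt(r_0)\le r_0$ for some $r_0\ge R$, then picking $k$ with $\mt^k(R)\le r_0<\mt^{k+1}(R)$ and using monotonicity gives $\mt^{k+1}(R)=\mt\bigl(\mt^k(R)\bigr)\le\mt(r_0)\le r_0$, a contradiction; hence every $R$ with $\mt^n(R)\to\infty$ satisfies $\mt(r)>r$ for $r\ge R$, so that $\bigl(\mt^n(R)\bigr)$ is strictly increasing with limit infinity.

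For the existence of $r\ge R$ with $m^n(r)\ge\mt^n(R)$ for all $n$, I would invoke the general real-variable result on escaping points of positive continuous functions from Section~\ref{Real}, applied to $m$; the mechanism behind it, which I expect to be the main obstacle, is the following interval-covering observation. Write $a_n=\mt^n(R)$, so $a_0<a_1<a_2<\cdots\to\infty$ and $a_{n+1}=\max_{0\le s\le a_n}m(s)$, and set $K_n=[a_n,a_{n+1}]$. Then $m$ maps $K_n$ over $K_{n+1}$: on the one hand $m(a_n)\le\max_{0\le s\le a_n}m(s)=a_{n+1}$; on the other hand the maximum $a_{n+2}=\max_{0\le s\le a_{n+1}}m(s)$ cannot be attained in $[0,a_n]$, where $m\le a_{n+1}<a_{n+2}$, so it is attained at a point of $(a_n,a_{n+1}]$; hence the compact interval $m(K_n)$ contains both $m(a_n)\le a_{n+1}$ and the value $a_{n+2}$, and therefore $m(K_n)\supseteq[a_{n+1},a_{n+2}]=K_{n+1}$. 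A standard nested-intervals argument then yields compact intervals $\widehat K_0\supseteq\widehat K_1\supseteq\cdots$ with $\widehat K_n\subseteq K_0$ and $m^n(\widehat K_n)=K_n$, and any $r\in\bigcap_n\widehat K_n\subseteq K_0=[R,\mt(R)]$ satisfies $m^n(r)\in K_n$, so $r\ge R$ and $m^n(r)\ge a_n=\mt^n(R)$ for every $n$. The delicate point is that, even though $m$ oscillates wildly, the strict increase of its running maxima $a_{n+1}$ forces this covering, and this is precisely the feature exploited by the result of Section~\ref{Real}.

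For the last assertion, write $S_R=\{z:|f^n(z)|\ge\mt^n(R)\text{ for }n\in\N\}$ and $S'_r=\{z:|f^n(z)|\ge m^n(r)\text{ for }n\in\N\}$, and $\Sigma_R=\bigcup_{\ell\in\N_0}f^{-\ell}(S_R)$, $\Sigma'_r=\bigcup_{\ell\in\N_0}f^{-\ell}(S'_r)$. First $m^n\le\mt^n$ pointwise, by induction from $m\le\mt$ and the monotonicity of $\mt$; hence~\eqref{mtcond} gives $S'_r\subseteq S_R$ and so $\Sigma'_r\subseteq\Sigma_R$. For the reverse inclusion I would use a shift: if $w\in S_R$, choose $N$ with $\mt^N(R)\ge r$ (possible by~\eqref{mtcond1}); then for every $k\in\N$,
\[
|f^{N+k}(w)|\ge\mt^{N+k}(R)=\mt^k\bigl(\mt^N(R)\bigr)\ge\mt^k(r)\ge m^k(r),
\]
so $f^N(w)\in S'_r$, whence $w\in f^{-N}(S'_r)\subseteq\Sigma'_r$; thus $S_R\subseteq\Sigma'_r$, and since $\Sigma'_r$ is backward invariant ($f^{-1}(\Sigma'_r)\subseteq\Sigma'_r$) we obtain $\Sigma_R\subseteq\Sigma'_r$, hence $\Sigma_R=\Sigma'_r$. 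The same shift argument shows that if $R_1\le R_2$ both satisfy~\eqref{mtcond1} then $\Sigma_{R_1}=\Sigma_{R_2}$ (one inclusion because $\mt^n(R_1)\le\mt^n(R_2)$ forces $S_{R_2}\subseteq S_{R_1}$, the other by choosing $N$ with $\mt^N(R_1)\ge R_2$); so $\Sigma_R$ is independent of the admissible $R$, and since $\Sigma'_r=\Sigma_R$ for every admissible pair $(r,R)$, the set is the same for all of them.
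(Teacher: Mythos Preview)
Your argument is correct and follows the same route as the paper: the existence of $R$ and $r$ comes from the real-variable result of Section~\ref{Real} (you spell out the interval-covering mechanism that underlies the implication (c)$\Rightarrow$(d) of Theorem~\ref{fullequiv}, which the paper simply cites), and the independence and equality of the two unions are proved by the same shift argument. Your treatment is in fact slightly more careful than the paper's in making explicit that every $R$ with $\mt^n(R)\to\infty$ satisfies $\mt(r)>r$ for $r\ge R$, which is what makes the ``for any such $R$'' clause go through.
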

It follows that, if (\ref{minmodprop}) holds, then the set
\[ V(f) = \bigcup_{\ell \in \N_0} f^{-\ell} (V_R(f)), \,\, \,  \textrm{where} \, V_R(f) = \{ z: |f^n(z)| \geq \mt^n(R), \textrm{ for } n \in \N\},\]
is  well defined  and independent of $ R $, provided $R$ is so large that $ \mt(r) > r $ for $ r \geq R $. Moreover, $V(f)$ is completely invariant under~$f$.

Since $ M^n(r) \geq \mt^n(r) $ for $ n \in \N$, we always have $ A(f) \subset V(f) $ and thus $ V(f) \neq \emptyset. $ Perhaps surprisingly, there are many classes of functions for which $V(f) = A(f)$. In fact, as we show in Section~\ref{setV}, all the functions listed in Theorem~\ref{funcs} have this property provided they satisfy a certain regularity condition.

The following result shows that the conclusions of both Eremenko's conjecture and Baker's conjecture hold for any function for which $V(f) = A(f)$.

\begin{theorem}
\label{erembak}
Let $ f $ be a \tef\ such that (\ref{minmodprop})  holds, so that the set $ V(f) $ is well defined.
\begin{enumerate}[(a)]
\item We have $ V(f) = A(f) $ if and only if there exist $ r \geq R > 0 $ such that
\begin{align*}
m^n(r) \geq M^n(R) \textrm{ for } n \in \N, \qand M^n(R) \to \infty \textrm{ as } n \to \infty.
\end{align*}
\item If the equivalent conditions in part~(a) hold, then
\begin{enumerate}[(i)]
\item $ V(f) $ and $ I(f) $ are spiders' webs, and
\item $ F(f) $ has no unbounded components.
\end{enumerate}
\end{enumerate}
\end{theorem}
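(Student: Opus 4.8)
The plan is to deduce everything from the description of $V(f)$ in terms of the iterated minimum modulus given by Theorem~\ref{fast}, together with known structural results for the fast escaping set $A(f)$.

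\emph{Part (a), the implication $(\Leftarrow)$.} Suppose $r\geq R>0$ satisfy $m^n(r)\geq M^n(R)$ for $n\in\N$ and $M^n(R)\to\infty$. Since $m^N(r)\geq M^N(R)\to\infty$, I may replace $r$ by $m^N(r)$ and $R$ by $M^N(R)$ for large $N$ and so assume in addition that $\mt(s)>s$ for $s\geq R$ (using~(\ref{mtequiv})); the inequality $m^n(r)\geq M^n(R)$ persists and $\mt^n(R)\to\infty$. As $M$ is increasing and $M\geq m$ pointwise, induction gives $M^n\geq\mt^n$, so $m^n(r)\geq M^n(R)\geq\mt^n(R)$; thus $(r,R)$ satisfies the hypotheses~(\ref{mtcond1}) and~(\ref{mtcond}) of Theorem~\ref{fast}. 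That theorem now gives
\[
V(f)=\bigcup_{\ell\in\N_0}f^{-\ell}\bigl(\{z:|f^n(z)|\geq m^n(r)\ \text{for}\ n\in\N\}\bigr)\subseteq\bigcup_{\ell\in\N_0}f^{-\ell}\bigl(\{z:|f^n(z)|\geq M^n(R)\ \text{for}\ n\in\N\}\bigr)=A(f),
\]
the last equality because $M^n(R)\to\infty$; combined with the inclusion $A(f)\subseteq V(f)$ noted before the theorem, this gives $V(f)=A(f)$.

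\emph{Part (a), the implication $(\Rightarrow)$.} It suffices to produce $R'\geq R''>0$ with $\mt^n(R')\geq M^n(R'')$ for all $n$ and $M^n(R'')\to\infty$: Theorem~\ref{fast} applied with $R'$ then yields $r\geq R'$ with $m^n(r)\geq\mt^n(R')\geq M^n(R'')$, which is the stated condition (with $R:=R''$). So assume $V(f)=A(f)$; then $V_R(f)\subseteq V(f)=A(f)$ for all $R$ large enough that $\mt(s)>s$ for $s\geq R$ (and we may take $R$ larger than a fixed radius $R_1$ with $M(s)>s$ for $s\geq R_1$). The key step is to construct a point $z^*\in V_R(f)$ whose forward orbit is controlled \emph{from above} by iterates of $\mt$, say $|f^n(z^*)|\leq\mt^{n+c}(R)$ for all $n$ and some fixed $c\in\N$. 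I would do this by a compactness argument: for each $N$ construct, by a nested-continuum argument in the spirit of Rippon and Stallard, a point $z_N$ with $\mt^n(R)\leq|f^n(z_N)|\leq\mt^{n+c}(R)$ for $1\leq n\leq N$ and with $|z_N|$ bounded, choosing sub-arcs of the relevant circles so that the orbit threads between these two levels and avoids the places where the minimum modulus is small; then pass to a limit $z^*$ along a subsequence and use continuity of the iterates. Since $z^*\in V_R(f)\subseteq A(f)$, there is $k$ with $|f^{n+k}(z^*)|\geq M^n(R_1)$ for all $n\geq 0$; combining the two estimates, $M^n(R_1)\leq|f^{n+k}(z^*)|\leq\mt^{n+k+c}(R)=\mt^n(\mt^{k+c}(R))$ for all $n$, so $R':=\mt^{k+c}(R)$ and $R'':=R_1$ work. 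Constructing the points $z_N$ is the main obstacle, since the non-monotonicity of $m$ rules out the use of round circles.

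\emph{Part (b).} Assume the conditions of~(a); by the reformulation above, $\mt^n(R')\geq M^n(R'')$ for some $R'\geq R''$, so $V_{R'}(f)\subseteq A_{R''}(f)\subseteq A(f)=V(f)$. I would first prove that $A(f)$ is a spider's web. Writing $\rho_n:=m^n(r)\geq M^n(R)$ and using that $f$ has infinitely many zeros (a consequence of~(\ref{minmodprop})), Rouch\'e's theorem gives $f(\{|z|\leq\rho_n\})\supseteq\{|w|\leq m(\rho_n)\}=\{|w|\leq\rho_{n+1}\}$ for all large $n$, and hence $f^k(\{|z|\leq\rho_n\})\supseteq\{|w|\leq\rho_{n+k}\}\supseteq\{|w|\leq M^{n+k}(R)\}$ for $k\geq 0$; from this ``disks map over larger disks'' property I would extract a nested sequence of bounded simply connected domains $G_n$ with $G_n\subseteq G_{n+1}$, $\bigcup_n G_n=\C$ and $\partial G_n\subseteq A(f)$, by choosing suitable components of the sets $f^{-k}(\{|w|<M^k(R)\})$ and verifying that their boundaries consist of fast escaping points, paralleling the spider's web structure of $A(f)$ for a function with a multiply connected Fatou component (alternatively, by checking a known sufficient condition of Rippon and Stallard). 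Granting this, $V(f)=A(f)$ is a spider's web; by the results of Rippon and Stallard on the fast escaping set, $A(f)$ being a spider's web implies that $I(f)$ and $J(f)$ are spiders' webs, which is~(i). For~(ii), every complementary component of a spider's web is bounded, so applying this to the spider's web $J(f)$ and using $F(f)=\C\setminus J(f)$ shows $F(f)$ has no unbounded component.

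\emph{The main obstacle} is the construction in~(a)$(\Rightarrow)$ of a point of $V(f)$ whose orbit escapes no faster than the iterates of $\mt$, and, relatedly, the construction of the spider's web loops in~(b): both require nested-continuum arguments carefully adapted to the possible non-monotonicity of the minimum modulus, kept large throughout by the inequality $m^n(r)\geq M^n(R)$.
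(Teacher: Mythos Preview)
Your argument for part~(a), $(\Leftarrow)$, is correct and is essentially the paper's proof.

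The genuine gap is in part~(a), $(\Rightarrow)$. You need a point $z^*\in V_R(f)$ whose orbit is bounded \emph{above} by $\mt^{n+c}(R)$ for \emph{all} $n$ and some \emph{fixed} $c$, and you correctly identify this as the main obstacle --- but you do not actually carry it out, and the sketch you give (threading sub-arcs of circles between levels $\mt^n(R)$ and $\mt^{n+c}(R)$) is not obviously feasible, since the minimum modulus need not be attained on any controllable arc. The paper avoids this difficulty entirely by proving the contrapositive and using a much weaker upper bound. Assuming (\ref{C3}) fails, one invokes an existing result (\cite[Theorem~1.4]{RS15}, stated in the paper as Lemma~\ref{annit}) which, given any admissible sequence $(a_n)$ with $a_{n+1}\le M(a_n)$, produces a point $z$ with $|f^n(z)|\ge a_n$ for all $n$ and $|f^{n_j}(z)|\le M^2(a_{n_j})$ along a subsequence. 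Taking $a_n=m^n(r)$ with $r$ as in Theorem~\ref{fast} gives $z\in V(f)$; on the other hand, the negation of (\ref{C3}) says that for \emph{every} $\ell\in\N_0$ one eventually has $m^{n+\ell}(r)<M^n(r)$, and combining this with the subsequence upper bound $|f^{n_j}(z)|\le M^2(m^{n_j}(r))$ shows that $z\notin A(f)$. The point is that a subsequence upper bound suffices, and such a bound is already available in the literature; your all-$n$ bound is stronger than needed and correspondingly harder to obtain.

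For part~(b), your alternative (``checking a known sufficient condition of Rippon and Stallard'') is exactly what the paper does: condition~(\ref{C3}) is precisely the hypothesis of \cite[Corollary~8.2]{RS10a} with $s_n=m^n(r)$, so $A_R(f)$ is a spider's web, and then \cite[Theorems~1.4 and~1.5]{RS10a} immediately give both~(i) and~(ii). Your primary route via Rouch\'e is shakier: the claim that (\ref{minmodprop}) forces $f$ to have infinitely many zeros is not justified (Iversen's theorem only gives $\liminf_{r\to\infty} m(r)=0$, which is compatible with (\ref{minmodprop})), and without a zero inside $\{|z|\le\rho_n\}$ the inclusion $f(\{|z|\le\rho_n\})\supseteq\{|w|\le m(\rho_n)\}$ can fail. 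There is no need for any of this once you cite the existing spider's web criterion.
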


Here, a set $ E $ is a \emph{\sw} if it is connected and there exists a sequence $ (G_n) $ of bounded, \sconn\ domains such that
\[ G_n \subset G_{n+1}, \,\, \partial G_n \subset E, \,\, \textrm {for }  n \in \N, \qand \bigcup_{n \in \N} G_n = \C. \]
We prove part~(b) of Theorem \ref{erembak} by showing that if the conditions in part~(a) hold, then the set $A_R(f)$, defined earlier, is a \sw,                                                                                                                                                                                                                                                                                                                                                                                                                                                                                                                                                                                                                                        a property that has several strong consequences \cite{RS10a}. This is, in essence, the approach used to prove all partial results on Baker's conjecture prior to the recent papers \cite{NRS,RS13}; see \cite{iB81,AH, HM, RS09a} and the discussion in \cite[Introduction]{RS13}.

In view of Theorem~\ref{erembak}, it is natural to ask the following.

\begin{question}\label{Qn}
Let $ f $ be a \tef\ such that (\ref{minmodprop}) holds.  Do the following conclusions hold under a weaker hypothesis than $V(f) = A(f)$?
\begin{enumerate}
\item $ V(f) $ and $ I(f) $ are spiders' webs;
\item $ F(f) $ has no unbounded components.
\end{enumerate}
\end{question}

Various recent results suggest that a significant weakening of the hypothesis here that $V(f) = A(f)$ is indeed plausible. For example, by \cite{RS13b} there exist entire functions of order less than $1/2$ (so \eqref{minmodprop} holds) for which $ A_R(f) $ is not a \sw. Thus, by the discussion after Theorem~\ref{erembak}, we have $  V(f) \neq A(f), $ and yet it can be shown using \cite[Theorems~1.1 and 2.3]{RS13} that properties~(1) and~(2) hold.  Also, the most recent work on Baker's conjecture \cite[Corollary~1.2]{NRS} shows that property~(2) holds whenever $ f $ is a real function of order at most $ 1/2 $, minimal type, with only real zeros.  Note, however, that there are \tef s of order greater than $ 1/2 $ for which (\ref{minmodprop}) holds and for which the Fatou set has an unbounded component (see Example~\ref{Exfirst}), so some hypothesis is needed in addition to \eqref{minmodprop}.

As a step towards making progress on Question~\ref{Qn}, we prove two main results, the first of which is a refinement of \cite[Theorem 1.2]{ORS}. There we showed that if \eqref{minmodprop} holds, then $ \kfc $ is connected, where $ \kfc $ is the set of points at which the iterates of~$ f $ form an unbounded sequence.   We now  define $ \vfp $ to be the set derived from $ V(f) $ in the same way that $ \kfc $ is derived from $ I(f) $ \---\ that is, by adding those points for which only a \emph{subsequence} of iterates satisfies its defining property. Thus
\[ \vfp = \bigcup_{\ell \in \N_0} f^{-\ell} (\vfpr), \]
where
\[ \vfpr = \{ z: \exists \,\, (n_j) \textrm{ such that } |f^{n_j}(z)| \geq \mt^{n_j}(R), \textrm{ for } j \in \N\}.\]
Here, $ (n_j) $ is a strictly increasing sequence of positive integers that in general depends on~$ z $, and $ R>0 $ is such that $ \mt(r) > r $ for $ r \geq R. $

The following is our refinement of \cite[Theorem 1.2]{ORS}.

%We prove the following main result, which gives partial progress on Question~\ref{Qn}, part~(1). Note that if we could replace $\kfc$ by $ I(f) $ in this result (or $\vfp$ by $ V(f) $), then this would give Eremenko's conjecture whenever \eqref{minmodprop} holds.

\begin{theorem}
\label{weaksw}
Let $ f $ be a \tef\ such that (\ref{minmodprop}) holds. Then the set $\vfp$, and also the set $\kfc$, is a weak spider's web.
\end{theorem}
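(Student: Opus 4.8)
The plan is to show that $\vfpr$ contains arbitrarily large topological "loops" around the origin, which is precisely what is needed for a weak spider's web (a connected set $E$ for which there is a sequence of bounded simply connected domains $G_n$ with $\overline{G_n}\subset G_{n+1}$, $\partial G_n\subset E$ and $\bigcup G_n=\C$, but without requiring $E$ itself to be connected). First I would fix $R>0$ with $\mt(r)>r$ for $r\ge R$, and recall the basic observation that drives everything: for each $s>0$ the circle $\{|z|=s\}$ has image $f(\{|z|=s\})$ that surrounds (in the sense of winding around) the circle $\{|w|=m(s)\}$, and in particular $f$ maps $\{|z|=s\}$ onto a curve that meets every radius at modulus at least $m(s)$; passing to $\mt$, for every $r\ge R$ there is some $s\le r$ with $m(s)=\mt(r)>r$. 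This lets one run a standard ``blow-up'' / covering argument: starting from a large round annulus $\{R\le|z|\le \rho\}$, one pulls back nested round circles to obtain, inside the annulus, a nested sequence of closed curves each of which surrounds the origin and each of whose points has $|f^n(z)|\ge\mt^n(R)$ for all $n$.

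Concretely, I would proceed as in the proof of \cite[Theorem~1.2]{ORS}, replacing $m$ by $\mt$ throughout. Step one: using \eqref{mtcond1} and \eqref{mtcond}, and the fact (from Theorem~\ref{fast}) that $V(f)$ is well defined and completely invariant, reduce to showing that $V_R(f)$ — equivalently, the set $\{z:|f^n(z)|\ge\mt^n(R)\text{ for }n\in\N\}$ enlarged by adding the points where only a subsequence works, which is exactly $\vfpr$ — contains, for each $n$, a closed curve $\gamma_n$ surrounding $0$, with $\gamma_n$ inside the bounded component of $\C\setminus\gamma_{n+1}$ and with the bounded components exhausting $\C$. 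Step two: construct the $\gamma_n$ by backward induction on continua. Pick $\rho_0$ large; on $\{|z|=\rho_0\}$ the image curve reaches modulus $\ge\mt(\rho_0)>\mt(R)$, so one can find a subcontinuum $\sigma_0$ of $\{|z|=\rho_0\}$ mapping onto an arc joining $|w|=\mt(R)$ to $|w|=\mt(\rho_0)$; iterate this pull-back, using at each stage that on any round circle of radius $\ge R$ the $f$-image surrounds the corresponding minimum-modulus circle, to get nested continua whose forward orbits stay above the $\mt^n(R)$ thresholds. Then take closures/unions to produce the surrounding curves $\gamma_n\subset\vfpr$. Step three: deduce the weak-spider's-web structure for $\vfp$ — connectedness of the individual loops is enough, one does not need $\vfp$ connected — and then observe $\vfp\subset\kfp$ with the same loops, so $\kfp$ is a weak spider's web as well. (Here $\kfp$ denotes $\kfc$.)

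The main obstacle — and the reason this is a genuine refinement rather than a restatement of \cite[Theorem~1.2]{ORS} — is that one must locate the surrounding curves inside $\vfpr$, a set defined by a \emph{subsequence} condition $|f^{n_j}(z)|\ge\mt^{n_j}(R)$, and do so uniformly enough to nest them and exhaust the plane. The difficulty is that $m(s)$, unlike $M(s)$, need not be monotonic or large on \emph{every} circle of radius $\ge R$; only $\mt(r)=\max_{s\le r}m(s)$ is well behaved, so at each pull-back stage one has the freedom to choose the radius $s$ (in the relevant range) at which $m(s)$ is large, but must then verify that the continua obtained at different radii can still be organised into a nested, exhausting sequence of loops. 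Handling this bookkeeping — essentially, that one can always retreat to a good radius while keeping the topology under control — is where the real work lies; once it is done, the passage from ``nested surrounding loops in $\vfpr$'' to ``$\vfp$ (and hence $\kfc$) is a weak spider's web'' is routine, using complete invariance of $V(f)$ and the pull-back behaviour of surrounding curves under $f$.
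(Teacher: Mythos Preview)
Your plan rests on a misreading of the definition. In this paper a \emph{weak spider's web} is a \emph{connected} set whose complement contains no unbounded \emph{closed} connected set; the word ``weak'' refers to the fact that only closed sets in the complement are excluded, not to any relaxation of connectedness. Your parenthetical definition (loops $\partial G_n\subset E$ exhausting the plane, ``without requiring $E$ itself to be connected'') is neither the paper's definition nor equivalent to it. In particular, your Step~3 claim that ``connectedness of the individual loops is enough, one does not need $\vfp$ connected'' is simply false for the statement being proved.

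More seriously, your strategy of producing nested closed curves in $\vfpr$ surrounding~$0$ would, once combined with connectedness, show that $\vfp$ is an actual spider's web, not merely a weak one. That is a strictly stronger conclusion than Theorem~\ref{weaksw}; the paper does not prove it, and Theorem~\ref{VandI}(c) together with Question~\ref{Qn} indicate it is treated as open. Your sketch does not overcome the obstacle you yourself identify: on a given circle $\{|z|=\rho\}$ the image has minimum modulus $m(\rho)$, not $\mt(\rho)$, and there is no mechanism in your outline that turns the freedom to ``retreat to a good radius'' into a closed curve all of whose points satisfy the subsequence inequality. The pull-back/covering heuristic you invoke is the one that works for $M(r)$ (monotone, larger than $r$ on every circle); for $m(r)$ it breaks down exactly here.

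The paper's argument goes in the opposite direction. It works with the \emph{complement} $\kfm$ and shows by a forward-orbit construction (Lemma~\ref{genctm} and Theorem~\ref{notlong}) that any continuum $K\subset\kfm$ containing a given point $\alpha$ must have $f^n(K)\subset D_n=\{|z|<m^n(r)\}$ for all large~$n$, and hence must be bounded; this immediately rules out unbounded closed connected sets in the complement. Connectedness of $\ifm$ is then obtained separately, via Rempe's characterisation (Lemma~\ref{rempe}): a hypothetical separating closed connected set $E\subset\kfm$ is bounded by the previous step, and a bounded complementary component of~$E$ meeting $\ifm$ yields, after pushing forward, a continuum on which one can run the same inductive construction to manufacture a point of $\ifm$ inside a set assumed to lie in $\kfm$, a contradiction. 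No loops in $\vfp$ are ever produced.
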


By a \emph{weak \sw} we mean a connected set whose complement contains no  unbounded closed connected sets. The name arises from the fact that a \sw\ has the stronger property that its complement contains no  unbounded connected sets; see Figure~\ref{weakSWpic} for an illustration of a set that is a weak spider's web but not a spider's web.

\begin{figure}[htb]
\begin{center}
\includegraphics[width=10cm]{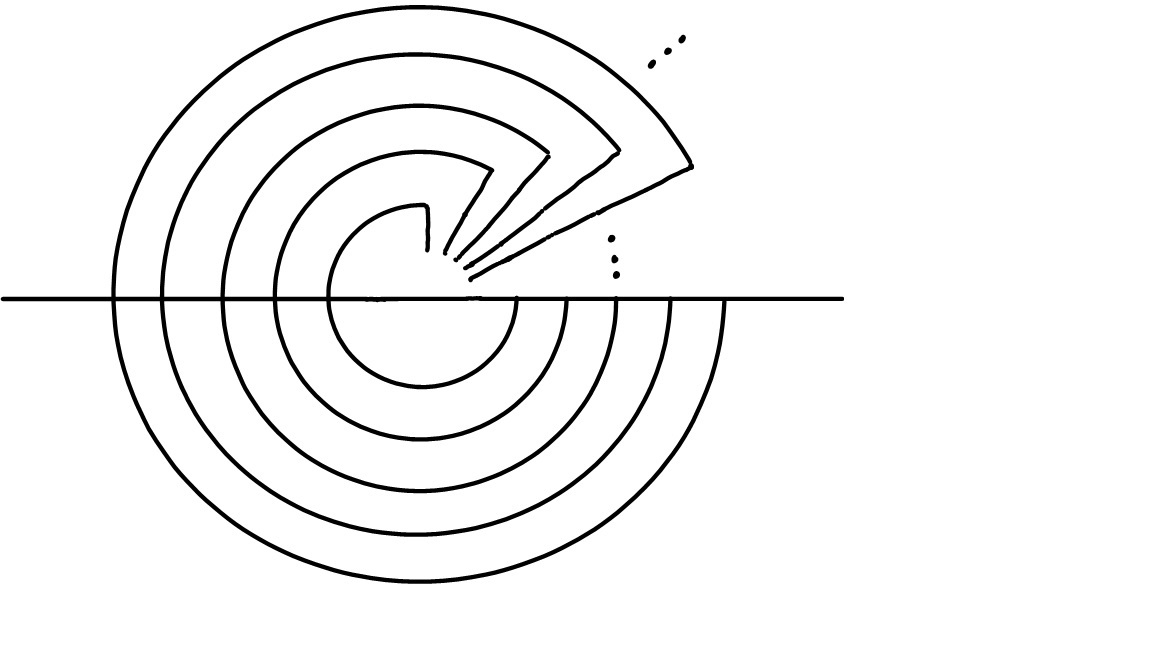}
\caption{A weak spider's web that is not a spider's web}
\label{weakSWpic}
\end{center}
\end{figure}

{\it Remark} \;If we could replace $\vfp$ by $V(f)$, or $\kfc$ by $I(f)$, in Theorem~\ref{weaksw}, then this would show that the conclusion of Eremenko's conjecture holds for all entire functions satisfying \eqref{minmodprop}. Indeed, if $V(f)$ is connected, then $I(f)$ is connected; see Theorem~\ref{VandI}.

In order to prove Theorem~\ref{weaksw}, we show that a major consequence of the condition \eqref{minmodprop} is that certain sufficiently long continua must contain a point of $\vfp$; see Theorem~\ref{notlong} for a detailed statement. We also use this fact about long continua meeting $V^+(f)$ to obtain our second result related to Question~\ref{Qn}. This second result strengthens several results of Zheng \cite{Z0}, which relate the dynamical behaviour of a \tef\ $f$ in an unbounded  component of $ F(f) $  to the size of the minimum modulus of~$f$, and hence are related to Baker's conjecture.
\newpage
First we restate the results of Zheng below, using the notation of this paper. (See Section~\ref{unbFat} for the definition of a wandering domain.)

\begin{itemize}
\item[(a)] If
\[m(r)>r \text{ for an unbounded sequence of values of } r\]
and~$U$ is an unbounded component of~$F(f)$, then $U\subset \kfc$; see \cite[Theorem~2]{Z0}.
\item[(b)] If \[\limsup_{r\to\infty}\dfrac{m(r)}{r}=\infty\] and $U$ is an unbounded component of $F(f)$, then $U$ is a wandering domain and $U\subset \kfc$; see \cite[Theorem~1]{Z0}.
\item[(c)] If $f$ has order less than 1/2 and~$U$ is an unbounded component of $F(f)$, then~$U$ is a wandering domain and $U\subset Z^+(f)$; see \cite[Theorem~3]{Z0}.
\end{itemize}

The set $Z^+(f)$ in result~(c) is defined as follows:
\[ Z^+(f):= \left\{ z  :  \exists \,\, (n_j) \textrm{ such that } \dfrac{\log^+ \log^+|f^{n_j}(z)| }{n_j} \to \infty \textrm{ as } j \to \infty \right\},\]
where $ (n_j) $ is a strictly increasing sequence of positive integers. The set $ Z^+(f) $ is formed from the set $ Z(f) $ of points whose iterates `zip' to $ \infty $ (for which, see \cite{RS00}) by adding those points at which only a subsequence of iterates has this property.

It is clear that Zheng's result~(c) is related to Baker's conjecture. To see that~(a) and~(b) are also related to this conjecture, note that the hypotheses about $m(r)$ in~(a) and~(b) both hold whenever~$f$ has order at most 1/2, minimal type, by a theorem of Heins \cite{mH48}. Also, note that the condition on $m(r)$ in~(a) ensures that~$f$ is a strongly polynomial-like function; see \cite{O12a}, where a generalisation of Zheng's result~(a) to all strongly polynomial-like functions is given.

We use the property of long continua meeting $\vfp$ to prove the following theorem, which gives more detailed information than Zheng's results~(a) and~(b) whenever \eqref{minmodprop} holds, and strengthens Zheng's result~(c).
\newpage
\begin{theorem}
\label{fatou}
Let $ f $ be a \tef\ and let $U$ be an unbounded component of $F(f)$.
\begin{itemize}
\item[(a)] If \eqref{minmodprop} holds, then $U \cap \vfp \neq \emptyset$, and hence $U$ is either a Baker domain (or preimage of a Baker domain) or a wandering domain. \\
If, in addition,
\[
\liminf_{r \to \infty} \frac{\mt(r)}{r} > 1,
\]
then $U \subset \vfp$.
\item[(b)] If
\[
\lim_{r \to \infty} \frac{\mt(r)}{r}  = \infty,
\]
then $U$ is a wandering domain and $U\subset \vfp$.
\item[(c)] If
\[
\lim_{r \to \infty} \frac{\log \mt(r)}{\log r} =\infty,
\]
then $U$ is a wandering domain and $U\subset \vfp\subset Z^+(f)$.
\end{itemize}
\end{theorem}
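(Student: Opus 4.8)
The three parts rest on a common engine: Theorem~\ref{notlong}, that sufficiently long continua must meet $\vfp$, together with two elementary observations, namely that $\vfp\subset\kfc$ (the subsequence in the definition of $\vfpr$ forces $|f^{n_j}(z)|\to\infty$, since $\mt^n(R)\to\infty$) and that $\vfp$ is completely invariant. First I prove the opening assertion of~(a). As $U$ is an unbounded component of $F(f)$, from a fixed $p\in U$ there is a curve in $U$ tending to $\infty$, hence a sub-continuum of $U$ joining $\{|z|=|p|\}$ to $\{|z|=T\}$ for each large $T$; taking $T$ large enough for Theorem~\ref{notlong} to apply, such a continuum meets $\vfp$, so $U\cap\vfp\neq\emptyset$. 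Since $\vfp\subset\kfc$, the component $U$ contains a point with unbounded orbit, so $U$ is not contained in an attracting or parabolic basin, nor in a Siegel disc, as those carry only bounded orbits, and entire functions have no Herman rings; by the classification of periodic Fatou components, $U$ is a Baker domain, a preimage of a Baker domain, or a wandering domain.

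For the inclusions $U\subset\vfp$, I must propagate membership of $\vfp$ from one point of $U$ to all of $U$. Fix $z_0\in U\cap\vfp$ and a subsequence $(n_j)$ with $|f^{n_j}(z_0)|\geq\mt^{n_j}(R)$; passing to a further subsequence, $f^{n_j}\to g$ locally uniformly on $U$, and since limit functions on a Fatou component are constant and $f^{n_j}(z_0)\to\infty$, we get $g\equiv\infty$. Given $w\in U$, choose a continuum $K\subset U$ with $z_0,w\in K$ and set $E_j=f^{n_j}(K)$, a continuum lying in a single Fatou component $U_{n_j}$, with $\min_{z\in E_j}|z|\to\infty$ and $\max_{z\in E_j}|z|\geq\mt^{n_j}(R)$. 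It suffices to show that, along a subsequence, $\min_{z\in E_j}|z|\geq\mt^{\,n_j-\ell}(R)$ for some fixed $\ell=\ell(w)$, for then $|f^{n_j}(w)|\geq\mt^{\,n_j-\ell}(R)$, which after reindexing says $f^\ell(w)\in\vfpr$, whence $w\in\vfp$. By the Schwarz--Pick inequality, $f^{n_j}\colon U\to U_{n_j}$ contracts the hyperbolic metric, so the hyperbolic diameter of $E_j$ in $U_{n_j}$ is bounded by that of $K$ in $U$; if the hyperbolic density of $U_{n_j}$ satisfies $\lambda_{U_{n_j}}(z)\gtrsim 1/|z|$ near $E_j$, this bounds the multiplicative spread of $|z|$ on $E_j$ by a constant $C$, so $\min_{z\in E_j}|z|\geq\mt^{n_j}(R)/C$. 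The growth hypotheses turn this into a bounded number of $\mt$-steps: when $\liminf_{r\to\infty}\mt(r)/r>1$ we have $\mt(r)\geq cr$ for large $r$, hence $\mt^{n_j}(R)\geq c^\ell\,\mt^{\,n_j-\ell}(R)$, and it is enough to take $\ell$ with $c^\ell\geq C$. This gives the second assertion of~(a), and also the inclusion $U\subset\vfp$ in~(b) and~(c) once we know $U$ is a wandering domain there.

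For part~(b), if $\mt(r)/r\to\infty$, then picking $s(r)\leq r$ with $\mt(r)=m(s(r))$ forces $s(r)\to\infty$ and $m(s(r))/s(r)\geq m(s(r))/r\to\infty$, so $\limsup_{r\to\infty}m(r)/r=\infty$; by Zheng's result~(b), restated above, $U$ is then a wandering domain, and $U\subset\vfp$ by the previous paragraph. For part~(c), the hypothesis $\log\mt(r)/\log r\to\infty$ gives $\mt(r)/r\to\infty$, so~(b) applies; and $\vfp\subset Z^{+}(f)$ follows by a direct estimate, since the hypothesis yields $\log\mt^{n}(R)\geq K^{\,n-n_0}\log\mt^{n_0}(R)$ for every $K$ and all large $n$, whence $\log^{+}\log^{+}|f^{n_j}(z)|/n_j\geq(\log\log\mt^{n_j}(R))/n_j\to\infty$ for $z\in\vfp$.

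The crux is the propagation step: converting ``$f^{n_j}\to\infty$ locally uniformly on $U$'' into ``$f^{n_j}$ attains the scale $\mt^{n_j}(R)$ throughout $U$''. The delicate ingredient is the lower bound $\lambda_{U_{n_j}}(z)\gtrsim 1/|z|$ on the hyperbolic density near the orbit, which may fail if $U_{n_j}$ is multiply connected (the component $U$ itself cannot be, being unbounded, by Baker's theorem, but its images might) or is a large component whose boundary lies far from the orbit relative to its modulus; ruling this out, or otherwise controlling the geometry of the continua $E_j$, is where Theorem~\ref{notlong} is used more carefully. This is also where the Baker-domain case of~(a) requires extra care, since $U\subset\vfp$ genuinely fails for general $f$ satisfying only~\eqref{minmodprop}, as Example~\ref{Exfirst} shows.
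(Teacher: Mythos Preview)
Your overall architecture matches the paper's: Theorem~\ref{notlong} for the first assertion of~(a), a distortion estimate for the second assertion of~(a), Zheng's result~(b) for the wandering-domain conclusion in~(b), and the growth of $\mt^n$ for $\vfp\subset Z^+(f)$ in~(c).

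The gap you flag in the propagation step is real in your argument but misdiagnosed. You route the distortion through the hyperbolic density of the \emph{image} components $U_{n_j}$ and then worry about whether those are simply connected or have boundary near the orbit. None of that is needed. Since $U$ is unbounded, $U$ itself is simply connected (as you note), and Lemma~\ref{dist} applies directly to~$U$: for any compact $K\subset U$ there is $C=C(K)$ with $|f^n(z_2)|/(|f^n(z_1)|+1)\le C$ for all $z_1,z_2\in K$ and all $n\in\N$. The proof of that lemma (from~\cite{RS00}) uses only a Riemann map of~$U$ together with the fact that each $f^n$ omits two values; no geometric information about $U_{n_j}$ enters. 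This gives precisely your estimate $\min_{z\in E_j}|z|\ge \mt^{n_j}(R)/C$ with $C$ independent of~$j$, and then your $c^\ell\ge C$ step finishes. In the paper this is packaged as Theorem~\ref{Vprops}(c): under $\liminf_{r\to\infty}\mt(r)/r>1$, any Fatou component meeting $\vfp$ lies entirely in $\vfp$. Theorem~\ref{notlong} is \emph{not} invoked again at this point, contrary to your speculation.

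Two smaller points. Your appeal to Example~\ref{Exfirst} in the last sentence is misplaced: that function satisfies $\liminf_{r\to\infty}\mt(r)/r>1$ (indeed $m(2n\pi)\ge 3n\pi$), so its Baker domain falls under the second assertion of~(a) and in fact lies in $V(f)\subset\vfp$; it does not exhibit failure of $U\subset\vfp$. And in~(c), the paper obtains $\log^+\log^+\mt^n(r)/n\to\infty$ by citing the final implication in Lemma~\ref{suffcon} rather than by your direct iteration; either route works.
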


The hypothesis on $\mt$ in part~(c) of this result holds for many classes of entire functions, including those of order less than 1/2;  see Section \ref{Functions} and, in particular, Lemma~\ref{suffcon}.

Finally, we also provide new information about the rate of escape that occurs in certain Baker domains.

\begin{theorem}
\label{BD}
Let $f$ be a \tef\ and let $U$ be an invariant Baker domain of~$f$.
\begin{itemize}
\item[(a)] If \eqref{minmodprop} holds, then there exists $R>0$ and, for each $z \in U$, a constant $C(z) > 1$ such that
\[
|f^n(z)| \geq \mt^n(R) / C(z),\;\; \text{for } n \in \N,
\]
and
\[
\mt^n(R)\to\infty \;\;\text{as } n\to \infty.
\]
\item[(b)] If, in addition,
\[
\liminf_{r \to \infty} \frac{\mt(r)}{r} > 1,
\]
then $U \subset V(f)$.
\end{itemize}
\end{theorem}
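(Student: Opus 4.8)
The plan is to exploit the fact, used in the proof of Theorem~\ref{fatou}, that certain long continua must meet $\vfp$, combined with the standard observation that in an invariant Baker domain $U$ the iterates $f^n$ on any compact set are comparable in the hyperbolic (and hence Euclidean, away from $\infty$) sense.

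For part~(a): fix $z_0 \in U$ and, since $U$ is an invariant Baker domain, the orbit $f^n(z_0) \to \infty$ within $U$. I would first choose $R>0$ so large that $\mt(r) > r$ for $r \ge R$; by Theorem~\ref{fast} such $R$ exists and $\mt^n(R) \to \infty$. Now I want to locate, for each $n$, a point on level $|f(w)| = \mt(s)$ that lies in $U$, and then propagate this forward. The cleanest route is to use the characterisation of $\mt$: for $s$ with $\mt(s) = m(\sigma)$ for some $\sigma \le s$, the whole circle $|z| = \sigma$ maps into $\{|z| \ge \mt(s)\}$. So if $U$ contains a curve crossing $|z| = \sigma$ for suitable $\sigma$, its image under $f$ is a connected subset of $U$ lying in $\{|z| \ge \mt(s)\}$. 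Iterating, and using that $\mt^n(R) \to \infty$ forces the relevant circles to be eventually large, one builds a nested family of continua in $U$ escaping at least as fast as $\mt^n(R)$. Finally, the hyperbolic metric on $U$ contracts under $f$, so any two orbits in $U$ stay at bounded hyperbolic distance; translating this into a Euclidean estimate (using a standard comparison of the hyperbolic metric of a hyperbolic domain with $1/\mathrm{dist}(\cdot,\partial U)$, or more simply Koebe-type distortion along the orbit) yields a constant $C(z) > 1$, depending only on the hyperbolic distance from $z$ to a fixed reference point, with $|f^n(z)| \ge \mt^n(R)/C(z)$ for all $n$.

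For part~(b): assume in addition $\liminf_{r\to\infty} \mt(r)/r > 1$, say $\mt(r) \ge \lambda r$ for $r \ge R'$ with $\lambda > 1$. Then from part~(a), $|f^n(z)| \ge \mt^n(R)/C(z) \ge \lambda^{n}R/C(z)\cdot(\text{correction})$, which grows geometrically; more to the point, I would feed this lower bound back into the definition of $V_R(f)$. The point is that the bound $|f^n(z)| \ge \mt^n(R)/C(z)$ differs from the defining inequality $|f^n(z)| \ge \mt^n(R)$ of $V_R(f)$ only by the bounded factor $C(z)$, and under the condition $\liminf \mt(r)/r > 1$ one absorbs that factor: there is $k = k(z) \in \N$ with $|f^k(z)| \ge \mt^k(R_1)$ for an appropriate $R_1 \ge R$ (because passing from $\mt^n(R)/C(z)$ up to $\mt^n$ of a slightly smaller starting radius costs only finitely many steps when $\mt$ is expanding by a definite factor), so $f^k(z) \in V_{R_1}(f) \subset V(f)$, whence $z \in V(f)$ by complete invariance of $V(f)$. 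Since $z \in U$ was arbitrary, $U \subset V(f)$.

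The main obstacle I anticipate is the transition in part~(a) from ``some orbit, or some continuum, in $U$ escapes as fast as $\mt^n(R)$'' to ``\emph{every} point $z\in U$ does, with an explicit constant $C(z)$.'' The first statement should follow fairly directly from the level-set/minimum-modulus argument together with the long-continua machinery behind Theorem~\ref{fatou}; the quantitative comparison between different orbits in $U$ is where care is needed, because one must control the Euclidean distortion along an escaping orbit, not just the hyperbolic distance. I expect this to be handled by a standard estimate: the hyperbolic distance between $z$ and the reference point is preserved (non-increased) by $f$, and combining this with the lower bound on $|f^n(w)|$ for the reference orbit $w$ and the two-sided comparison of hyperbolic and quasi-hyperbolic metrics in $U$ gives the factor $C(z)$, provided one checks that $\partial U$ stays far enough from the relevant orbit points — which is exactly what the already-established escape of the reference orbit guarantees.
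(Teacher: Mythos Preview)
Your overall plan matches the paper's: for part~(a), find a reference object in $U$ whose iterates outpace $\mt^n(R)$, then use a distortion estimate to transfer this to every $z\in U$; for part~(b), absorb the constant $C(z)$ using the expansion hypothesis on~$\mt$. Part~(b) is essentially correct and is exactly what the paper does: from $|f^n(z)|\ge \mt^n(R)/C(z)$ and the existence of $k$ with $\mt^{n+k}(R)\ge C(z)\,\mt^n(R)$ one gets $|f^{n+k}(z)|\ge \mt^n(R)$, so $z\in V(f)$.

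In part~(a), however, there is a concrete gap. You write that if $U$ contains a curve crossing the circle $\{|z|=\sigma\}$, then ``its image under $f$ is a connected subset of $U$ lying in $\{|z|\ge\mt(s)\}$''. This is false: only the \emph{points of the curve lying on the circle} are guaranteed to map into $\{|z|\ge m(\sigma)\}$; the rest of the curve may map much closer to~$0$. So your proposed iteration, producing a nested family of continua each lying entirely in some large annulus, does not get started. The long-continua machinery behind Theorem~\ref{fatou} does not help directly either, since Theorem~\ref{notlong} yields only a point of $\vfp$ (a subsequence bound), not the full-sequence estimate $|f^n(\cdot)|\ge \mt^n(R)/C$ required here.

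The paper's device is simple but decisive: pick $\alpha\in U$ and a compact curve $\Gamma\subset U$ from $\alpha$ to $f(\alpha)$. Because $f(\alpha)\in\Gamma\cap f(\Gamma)$, the images $\Gamma,f(\Gamma),f^2(\Gamma),\dots$ form a chain of overlapping continua. With $r$ chosen via Theorem~\ref{fullequiv} so that $m^n(r)$ is strictly increasing and $|\alpha|\le r<|f(\alpha)|$, let $n_k$ be the largest $n$ with $f^k(\Gamma)\cap\{|z|=m^n(r)\}\ne\emptyset$. The minimum-modulus property forces $f^{k+1}(\Gamma)$ to meet $\{|z|\ge m^{n_k+1}(r)\}$, while the overlap $f^{k+1}(\alpha)\in f^k(\Gamma)$ forces it also to meet $\{|z|<m^{n_k+1}(r)\}$; hence $n_{k+1}\ge n_k+1\ge k+1$. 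Thus for every $k$ there is \emph{some} $w_k\in\Gamma$ with $|f^k(w_k)|\ge m^k(r)\ge\mt^k(R)$. Your hyperbolic-comparison step is then precisely Lemma~\ref{dist}: apply it first to the fixed compact set $\Gamma$ (upgrading ``some $w_k\in\Gamma$'' to ``all $z\in\Gamma$, up to a constant $C_\Gamma$''), and then again to pass from a point of $\Gamma$ to an arbitrary $z\in U$.
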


The organisation of this paper is as follows.  In Section \ref{Real}, we prove our results about the existence of escaping points of positive continuous functions; in particular, Theorem~\ref{fullequiv} gives several equivalent conditions for the existence of such points.  In Section~\ref{Functions}, we use Theorem \ref{fullequiv} to deduce  Theorem  \ref{funcs} on classes of functions for which condition (\ref{minmodprop}) holds. In Sections~\ref{setV} and~\ref{VequalsA}, we prove Theorems \ref{fast} and \ref{erembak} concerning the set $ V(f) $ and show that $V(f)=A(f)$ for the functions listed in Theorem~\ref{funcs} provided they satisfy a mild regularity condition.  The proofs of Theorem \ref{weaksw} and related results are given in Section \ref{WeakSWRes}, and the proofs of Theorems~\ref{fatou} and~\ref{BD} are given in Section~\ref{unbFat}.  Finally, in Section~\ref{examples}, we give some examples to illustrate our results.

\section{Escaping points of positive continuous functions}
\label{Real}
\setcounter{equation}{0}

%In this section we prove Theorem~\ref{equiv}, Theorem \ref{slow} and related results on the iteration of the minimum modulus and other continuous real functions.
%
%Theorem~\ref{equiv} shows, in particular, that conditions (\ref{minmodprop}) and (\ref{mtequiv}) on the minimum modulus are equivalent.  In the next section, we use this equivalence to deduce  Theorem  \ref{funcs}, which gives several classes of \tef s that satisfy these conditions.   Theorem \ref{slow} shows that, if $ f $ is a \tef\ such that (\ref{minmodprop}) holds, then the iterated minimum modulus can tend to infinity more slowly than an arbitrary positive sequence that tends to infinity.

Let $ \varphi: [0, \infty) \to [0, \infty) $ be a continuous function and define the maximal function~$\widetilde{\varphi}$ by
\[ \widetilde{\varphi}(t) := \max_{0 \leq s \leq t} \varphi(s), \qfor t \in [0, \infty). \]
The following result gives a number of conditions equivalent to the existence of escaping points of $\varphi$. In particular, this theorem justifies our assertion in the introduction that
\[
\text{there exists } r > 0 \text{ with } m^n(r) \to \infty \text{ as } n \to \infty
\]
if and only if
\[
\text{there exists } R > 0 \text{ such that } \mt(r) >r, \text{ for } r \geq R.
\]

\begin{theorem}
\label{fullequiv}
Let $ \varphi: [0, \infty) \to [0, \infty) $ be continuous. Then the following statements are equivalent.
\begin{enumerate}[(a)]
\item There exists $ t > 0 $ such that $ \varphi^n(t) \to \infty $ as $ n \to \infty. $
\item There exists $ t' > 0 $ such that the set $ \lbrace \varphi^n(t'): n \in \N_0 \rbrace $ is unbounded.
\item There exists $ T>0 $ such that $ \widetilde{\varphi}(t) > t $, for $ t \geq T $.
\item There exist $ t \geq T > 0 $ such that
\[ \varphi^n(t) \textrm{ and } \widetilde{\varphi}^{\, n}(T) \textrm{ increase strictly with } n \textrm{ to } \infty, \]
and
\[ \varphi^n(t) \in [ \, \widetilde{\varphi}^{\, n}(T) , \widetilde{\varphi}^{\, n+1}(T) \, ], \qfor n \in \N_0. \]
\item There exists a sequence $ (t_n) $ of positive real numbers such that $ t_n \to \infty $ as $ n \to \infty $ and
\begin{equation*}
\varphi(t_n) \geq t_{n+1}, \quad \textrm{ for }  n \in \N_0.
\end{equation*}
\end{enumerate}
\end{theorem}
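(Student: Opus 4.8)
The plan is to prove the equivalences via the cycle (a)$\Rightarrow$(b)$\Rightarrow$(c)$\Rightarrow$(d)$\Rightarrow$(e)$\Rightarrow$(a), since (d)$\Rightarrow$(a) and (e)$\Rightarrow$(a) are each immediate. The implications (a)$\Rightarrow$(b) (take $t'=t$) and (d)$\Rightarrow$(e) (take $t_n=\widetilde{\varphi}^{\,n}(T)$, using $\widetilde{\varphi}(\widetilde{\varphi}^{\,n}(T))=\widetilde{\varphi}^{\,n+1}(T)\geq\widetilde{\varphi}^{\,n+1}(T)$ and the fact that $\varphi\le\widetilde\varphi$ is not what is needed — rather we use $\widetilde{\varphi}$ itself, noting $\widetilde\varphi$ is also continuous and $\widetilde{\widetilde\varphi}=\widetilde\varphi$, so $\widetilde\varphi$ satisfies (c) as well) are short bookkeeping. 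The implications (b)$\Rightarrow$(c) and (c)$\Rightarrow$(d) carry the real content.

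For (b)$\Rightarrow$(c), I would argue by contradiction. Suppose for every $T>0$ there is some $t\geq T$ with $\widetilde{\varphi}(t)\leq t$. First observe that $\widetilde{\varphi}$ is non-decreasing and continuous, and that $\varphi(s)\leq\widetilde{\varphi}(t)$ whenever $0\le s\le t$; hence if $\widetilde{\varphi}(t)\le t$ then the interval $[0,t]$ is forward-invariant under $\varphi$, and indeed under $\widetilde{\varphi}$. The key step is to show that the set $S=\{t>0:\widetilde{\varphi}(t)\le t\}$ is unbounded and to extract from it a genuine barrier: if $S$ is unbounded then for any starting point $t'$ we can pick $t\in S$ with $t\ge t'$ (using $\widetilde\varphi(t')\ge t'$ would contradict... — more carefully, pick $t\in S$, $t\geq\max(t',\widetilde\varphi(t'))$ is possible since $S$ is unbounded), whence $[0,t]$ is forward-invariant and $\varphi^n(t')\in[0,t]$ for all $n$, contradicting (b). So it remains to rule out that $S$ is bounded. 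If $S$ is bounded, say $S\subset[0,T_0]$, then $\widetilde{\varphi}(t)>t$ for all $t>T_0$, which is exactly (c). So in fact (b)$\Rightarrow$(c) reduces to: \emph{either} (c) holds \emph{or} $S$ is unbounded and then (b) fails. I expect this dichotomy to be the main obstacle — one has to handle carefully the possibility that $\{t:\widetilde\varphi(t)>t\}$ and $\{t:\widetilde\varphi(t)\le t\}$ interleave arbitrarily far out, and the resolution is precisely that a single point $t$ with $\widetilde{\varphi}(t)\le t$ traps all orbits started below it.

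For (c)$\Rightarrow$(d), assume $\widetilde{\varphi}(t)>t$ for $t\geq T$. Then $\widetilde{\varphi}^{\,n}(T)$ is strictly increasing; it must tend to infinity, since a finite limit $L\ge T$ would satisfy $\widetilde{\varphi}(L)=L$ by continuity and monotonicity, contradicting $\widetilde{\varphi}(L)>L$. Now I need to find the starting point $t$ for $\varphi$ itself. The idea is: since $\widetilde{\varphi}(T)=\max_{0\le s\le T}\varphi(s)$, there is some $s_0\in[0,T]$ with $\varphi(s_0)=\widetilde{\varphi}(T)$; but I want a point in $[T,\widetilde{\varphi}(T)]$. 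One shows that on $[T,\widetilde\varphi(T)]$ the maximal function agrees with $\widetilde\varphi$ and that $\varphi$ attains the value $\widetilde\varphi(T)$ at a point $t\in[0,\widetilde\varphi(T)]$; by replacing $t$ with a suitable preimage or by choosing $t$ in the range $[T,\widetilde\varphi(T)]$ achieving $\varphi(t)\ge T$ (again via an intermediate value argument, since $\varphi$ is continuous, $\varphi(s_0)=\widetilde\varphi(T)\ge T$ for some $s_0\le T$, and $\varphi\ge 0$) one secures $t\in[\widetilde\varphi(T),\widetilde\varphi^2(T)]$ with $\varphi(t)\ge\widetilde\varphi(T)$. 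Iterating, and using the semiconjugacy-type inequalities $\varphi(x)\le\widetilde\varphi^{\,n+1}(T)$ for $x\le\widetilde\varphi^{\,n}(T)$ together with $\varphi(x)\ge$ (lower bound from the previous step), one obtains inductively that $\varphi^n(t)\in[\widetilde\varphi^{\,n}(T),\widetilde\varphi^{\,n+1}(T)]$ and that $\varphi^n(t)$ is strictly increasing to infinity (the strictness because $\varphi^n(t)\ge\widetilde\varphi^{\,n}(T)>\widetilde\varphi^{\,n-1}(T)\ge\varphi^{n-1}(t)$). The bracketing property in (d) then follows directly from this induction. Finally (d)$\Rightarrow$(a) and (e)$\Rightarrow$(a) are trivial: in (d) the orbit $\varphi^n(t)$ already tends to infinity, and in (e) we have $\varphi^n(t_1)\ge t_{n+1}\to\infty$ by an easy induction on the chain $\varphi(t_n)\ge t_{n+1}$. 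I would present the whole argument as the chain (a)$\Rightarrow$(b)$\Rightarrow$(c)$\Rightarrow$(d)$\Rightarrow$(e)$\Rightarrow$(a), inserting the trivial (d)$\Rightarrow$(a) only as a remark to close any gaps.
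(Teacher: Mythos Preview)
Your overall cycle and the arguments for (a)$\Rightarrow$(b) and (b)$\Rightarrow$(c) are fine (your contrapositive for (b)$\Rightarrow$(c) is essentially equivalent to the paper's direct argument). But two steps have genuine gaps, both stemming from the same oversight: \emph{$\varphi$ is not assumed monotone}.

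First, your (e)$\Rightarrow$(a) is wrong. You claim that from $\varphi(t_n)\ge t_{n+1}$ one gets $\varphi^n(t_1)\ge t_{n+1}$ by ``an easy induction''. But the induction step would need $\varphi(t_1)\ge t_2 \implies \varphi(\varphi(t_1))\ge \varphi(t_2)$, which fails when $\varphi$ is not increasing. There is no reason the single orbit starting at $t_1$ (or $t_0$) should escape. The paper does \emph{not} prove (e)$\Rightarrow$(a) directly; it proves (e)$\Rightarrow$(c) instead: given $t\ge t_0$, choose $n$ with $t_n\le t<t_{n+1}$ and note $\widetilde\varphi(t)\ge\varphi(t_n)\ge t_{n+1}>t$.

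Second, your (c)$\Rightarrow$(d) is missing the key mechanism. You try to pick a single $t\in[T,\widetilde\varphi(T)]$ and then argue ``inductively'' that $\varphi^n(t)\in[\widetilde\varphi^{\,n}(T),\widetilde\varphi^{\,n+1}(T)]$, quoting a ``lower bound from the previous step''. There is no such lower bound: knowing $\varphi^{n-1}(t)\ge\widetilde\varphi^{\,n-1}(T)$ gives no control on $\varphi^n(t)$ from below, again because $\varphi$ need not be monotone. The paper's argument is a nested-compact-sets construction: one first shows (their Lemma~2.3) that with $E_n=[\widetilde\varphi^{\,n}(T),\widetilde\varphi^{\,n+1}(T)]$ one has $\varphi(E_n)\supset E_{n+1}$, because $\varphi(\widetilde\varphi^{\,n}(T))\le\widetilde\varphi^{\,n+1}(T)$ and there exists $s\in E_n$ with $\varphi(s)=\widetilde\varphi^{\,n+2}(T)$. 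Then one applies the standard lemma that a chain of compact sets with $\varphi(E_n)\supset E_{n+1}$ yields a point $t\in E_0$ with $\varphi^n(t)\in E_n$ for all $n$ (via a nested intersection of preimages). The point $t$ is produced at the end, not at the start; no forward induction is possible.

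Minor point: for (d)$\Rightarrow$(e) simply take $t_n=\varphi^n(t)$; then $\varphi(t_n)=t_{n+1}$ and $t_n\to\infty$ by (d). Your detour through $\widetilde\varphi$ is unnecessary and, as you noticed, does not work.
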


{\it Remarks }\;1. It follows from a recent result of Short and Sixsmith \cite[Theorem~1.6]{SS} that if $ t>0 $ satisfies the condition in Theorem \ref{fullequiv} part~(a), then every open interval containing $ t $ includes uncountably many escaping points.

2. The condition in part~(d) of Theorem~\ref{fullequiv} shows that there always exist points that escape at the fastest possible rate. We use this fact in Section~\ref{setV} in the proof of Theorem~\ref{fast}.

3. In Section~\ref{WeakSWRes}, we use Theorem~\ref{fullequiv} to show that if a \tef\ $f$ satisfies \eqref{minmodprop}, then any curve that tends to~$\infty$ and is invariant under~$f$ must meet $I(f)$.

In the introduction we also stated that if \eqref{minmodprop} holds, then $m^n(r)$ can tend to $\infty$ arbitrarily slowly. This follows from our next result because any \tef\ either has infinitely many zeros or has asymptotic value~0, by Iversen's theorem, so
\[
\liminf_{r\to\infty} m(r)=0.
\]

\begin{theorem}
\label{realslow}
Let $ \varphi: [0, \infty) \to [0, \infty) $ be continuous and suppose there exists $ t > 0 $ such that $ \varphi^n(t) \to \infty $ as $ n \to \infty. $  Suppose also that
\begin{align}
\label{limf}
\liminf_{t \to \infty} \varphi(t) < \infty.
\end{align}
If $ a = (a_n) $ is a positive sequence such that $ a_n \to \infty $ as $ n \to \infty $, then there exist $ t_a > 0 $ and $ N_a \in \N $ such that $ \varphi^n(t_a) \to \infty $ as $ n \to \infty $ and
\begin{align}
\label{slowseq}
\varphi^n(t_a) \leq a_n,  \quad \textrm{  for  } n \geq N_a.
\end{align}
\end{theorem}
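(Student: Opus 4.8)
The plan is to build the point $t_a$ as a nested intersection of intervals, using the fact that the escape condition is robust (by Theorem~\ref{fullequiv}, escaping points are dense in a suitable sense, and from part~(e) they can be prescribed along a sequence $t_n$ with $\varphi(t_n)\ge t_{n+1}$) together with the hypothesis~\eqref{limf} that $\varphi$ dips back down to bounded values infinitely often. Roughly speaking, I want to arrange the orbit of $t_a$ so that at certain stages it is pushed high (to guarantee it escapes), but just before each such push it is made very small (to beat the slowly growing bound $a_n$). The interplay of these two requirements is what makes the orbit escape slowly.

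First I would fix, using Theorem~\ref{fullequiv}(c), a value $T>0$ with $\widetilde\varphi(t)>t$ for $t\ge T$; so from any starting point at height $\ge T$ we can, by following a point that realises the maximum, reach arbitrarily large heights in finitely many steps while staying above $T$. Next, using~\eqref{limf}, I would pick an unbounded increasing sequence of radii $s_1<s_2<\dots$ with $\varphi(s_k)\le C$ for some fixed constant $C$ (in fact $C=\liminf$ will do up to an $\eps$). The construction is inductive: having controlled the orbit on a long initial block of iterates, I use the $\widetilde\varphi(t)>t$ property to drive the current iterate up past some large threshold, then continue until the orbit lands near one of the $s_k$ — here I need to choose $s_k$ larger than the relevant $a_n$-value, which is possible because the $s_k\to\infty$ while at the chosen step $n$ the bound $a_n$ is some fixed finite number — and at the step after that the iterate drops to $\varphi(s_k)\le C$, comfortably below $a_{n+1}$ once $n$ is large. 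Pulling these finitely-many constraints back through $\varphi$ at each stage (using continuity to get genuine open intervals, and using the standard nested-interval / diameter-shrinking argument as in the proof of Theorem~\ref{fullequiv}) produces a decreasing sequence of compact intervals whose intersection contains the desired $t_a$; the escape of $\varphi^n(t_a)$ follows because infinitely often the orbit is driven above an increasing sequence of thresholds, and then part~(e) or~(d) of Theorem~\ref{fullequiv} keeps it escaping thereafter. Choosing $N_a$ large enough that all the "landing near $s_k$" steps occur after the $a_n$ have exceeded the relevant constants yields~\eqref{slowseq}.

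The main obstacle, and the point requiring real care, is the bookkeeping that reconciles the two competing demands: the orbit must revisit the region where $\varphi$ is bounded (to undercut $a_n$) infinitely often, yet must also be pushed arbitrarily high infinitely often (to escape), and the "pushing high" necessarily makes the next few iterates potentially much larger than $a_n$ for the small indices $n$ involved — this is why one only claims~\eqref{slowseq} for $n\ge N_a$, and why the sequence $a_n\to\infty$ is essential: at the finitely many "push" steps before $N_a$ one simply does not care about the bound, and after $N_a$ one must verify that between consecutive visits to the bounded region the orbit, though growing, has not yet outrun $a_n$. Making this quantitative requires choosing the thresholds and the indices of the $s_k$ adaptively with respect to the given sequence $(a_n)$, so the construction of the intervals and the choice of $N_a$ must be interleaved rather than done in sequence. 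A secondary technical point is ensuring the nested intervals have lengths tending to $0$ so that the intersection is a single point (or at least non-empty); this is handled exactly as in the monotone/continuity arguments underlying Theorem~\ref{fullequiv}, by always selecting, at each stage, a closed subinterval on which the relevant iterate of $\varphi$ is injective or at least has controlled oscillation.
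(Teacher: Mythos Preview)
Your proposal contains a genuine gap that breaks the argument. You want the orbit of $t_a$ to visit, infinitely often, a point close to some $s_k$ with $\varphi(s_k)\le C$; but then the \emph{next} iterate is forced below $C+\varepsilon$, so the sequence $(\varphi^n(t_a))$ has a subsequence bounded by a fixed constant. That is incompatible with the conclusion $\varphi^n(t_a)\to\infty$: you obtain at best an unbounded orbit (i.e.\ $\limsup=\infty$), not an escaping one. The sentence ``the escape of $\varphi^n(t_a)$ follows because infinitely often the orbit is driven above an increasing sequence of thresholds, and then part~(e) or~(d) of Theorem~\ref{fullequiv} keeps it escaping thereafter'' does not rescue this: parts~(d) and~(e) assert the existence of \emph{some} escaping orbit, not that the particular orbit you have constructed escapes; and your construction explicitly forces infinitely many returns below $C$.

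The paper's proof avoids this by a different mechanism: rather than letting the orbit oscillate, it makes the orbit \emph{wait}. With $E_n=[\widetilde\varphi^{\,n}(T),\widetilde\varphi^{\,n+1}(T)]$ one always has $\varphi(E_n)\supset E_{n+1}$, and the hypothesis $\liminf_{t\to\infty}\varphi(t)<\infty$ yields a subsequence $(E_{n(k)})$ with the self-covering property $\varphi(E_{n(k)})\supset E_{n(k)}$ (since on $E_{n(k)}$ the function $\varphi$ attains both a value below $\widetilde t_{n(0)}$ and a value at least $\widetilde t_{n(k)+2}$). One then builds a sequence $(F_m)$ by listing the $E_n$ in order but repeating each $E_{n(k)}$ as many times as necessary so that $F_m\subset[0,a_m]$ for large $m$; the covering relations give $\varphi(F_m)\supset F_{m+1}$, and the standard nested-preimage lemma (Lemma~\ref{RSlemm}) produces $t_a$ with $\varphi^m(t_a)\in F_m$ for all $m$. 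This orbit climbs through the intervals $E_n$ monotonically and hence genuinely tends to infinity, but as slowly as one likes. The point you were missing is that the $\liminf$ hypothesis should be used to create \emph{self-covering} intervals (allowing the orbit to stall at a fixed height), not to drop the orbit back to a bounded level.
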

%Note that if $ \varphi $ is the minimum modulus of a \tef\ then (\ref{limf}) holds, so Theorem \ref{slow} follows from Theorem \ref{realslow}.

The key techniques used in the proofs of Theorems~\ref{fullequiv} and \ref{realslow} are given in the following lemma.
\begin{lemma}
\label{key}
Let $ \varphi $ and $ \widetilde{\varphi} $ be as defined before the statement of Theorem \ref{fullequiv}, and suppose there exists $ T>0 $ such that $ \widetilde{\varphi}(t) > t, $ for $ t \geq T $.  Then
\begin{enumerate}[(a)]
\item the sequence $ \widetilde{t}_n : = \widetilde{\varphi}^{\, n}(T), \, n \in \N_0, $ is strictly increasing and tends to $ \infty $;
\item if $ E_n=[\, \widetilde{t}_n, \widetilde{t}_{n+1} \, ], $ for $ n \in \N_0, $ then
\[ \varphi(E_n) \supset E_{n+1}, \qfor n \in \N_0; \]
\item if, in addition,
\[ \liminf_{t \to \infty} \varphi(t) < \infty, \]
then there is a subsequence $ (E_{n(k)}), \, k \in \N_0, $ such that
\[ \varphi(E_{n(k)}) \supset \bigcup_{n = n(0)}^{n(k)} E_n, \qfor k \in \N_0. \]
\end{enumerate}
\end{lemma}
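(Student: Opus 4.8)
The plan is to treat the three parts in sequence; parts (b) and (c) both hinge on a single elementary observation about where the running maximum attains its value. For part~(a) I would first note that $\widetilde{\varphi}$ is non-decreasing and continuous on $[0,\infty)$ (the running maximum of a continuous function being continuous), and then argue by induction: since $\widetilde{t}_0 = T \ge T$, the hypothesis gives $\widetilde{t}_1 = \widetilde{\varphi}(T) > T = \widetilde{t}_0$, and if $\widetilde{t}_n > \widetilde{t}_{n-1} \ge T$ then $\widetilde{t}_n \ge T$, so $\widetilde{t}_{n+1} = \widetilde{\varphi}(\widetilde{t}_n) > \widetilde{t}_n$. Thus $(\widetilde{t}_n)$ increases strictly with all terms in $[T,\infty)$; were it bounded, with limit $L$, continuity of $\widetilde{\varphi}$ would force $\widetilde{\varphi}(L) = L$, contradicting $\widetilde{\varphi}(L) > L$ (as $L \ge T$). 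Hence $\widetilde{t}_n \to \infty$, and in particular $\bigcup_n E_n = [T,\infty)$, a fact I shall use in part~(c).

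For part~(b), the key point is that $\widetilde{\varphi}(\widetilde{t}_{n+1}) = \widetilde{t}_{n+2}$ is the maximum of $\varphi$ over $[0,\widetilde{t}_{n+1}]$, and that this maximum is attained \emph{inside} $E_n = [\widetilde{t}_n, \widetilde{t}_{n+1}]$: if it were attained at some $s^* \le \widetilde{t}_n$, then $\varphi(s^*) \le \widetilde{\varphi}(\widetilde{t}_n) = \widetilde{t}_{n+1} < \widetilde{t}_{n+2}$, which is impossible. On the other hand $\varphi(\widetilde{t}_n) \le \widetilde{\varphi}(\widetilde{t}_n) = \widetilde{t}_{n+1}$, so on the interval $E_n$ the continuous function $\varphi$ takes a value $\le \widetilde{t}_{n+1}$ and a value $= \widetilde{t}_{n+2}$; by the intermediate value theorem, $\varphi(E_n) \supset [\widetilde{t}_{n+1}, \widetilde{t}_{n+2}] = E_{n+1}$.

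For part~(c), set $\ell := \liminf_{t\to\infty}\varphi(t) < \infty$ and fix $L > \ell$; then $\{t \ge T : \varphi(t) < L\}$ is unbounded, so, since the $E_n$ cover $[T,\infty)$ by part~(a), there are infinitely many indices $m$ for which $E_m$ contains a point $\tau_m$ with $\varphi(\tau_m) < L$. Using $\widetilde{t}_m \to \infty$, let $n(0)$ be the least such index with $\widetilde{t}_{n(0)} \ge L$, and let $n(0) < n(1) < n(2) < \cdots$ enumerate all such indices that are $\ge n(0)$. For each $k$ the set $\varphi(E_{n(k)})$ is connected and contains $\varphi(\tau_{n(k)}) < L \le \widetilde{t}_{n(0)}$, and, by part~(b), it also contains the point $\widetilde{t}_{n(k)+1} \in E_{n(k)+1} \subset \varphi(E_{n(k)})$; since $n(0) \le n(k)$ it follows that
\[ \varphi(E_{n(k)}) \supset [\widetilde{t}_{n(0)}, \widetilde{t}_{n(k)+1}] = \bigcup_{n=n(0)}^{n(k)} E_n, \]
as required, the case $k=0$ being covered precisely because $n(0)$ was itself chosen among the indices at which $\varphi$ dips below $L$.

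The crux of the lemma — and the step I expect to need the most care — is the observation in part~(b): that the new maximum $\widetilde{t}_{n+2} = \widetilde{\varphi}(\widetilde{t}_{n+1})$ cannot have been achieved at any argument $\le \widetilde{t}_n$ and so must occur on $E_n$ itself. Once this is secured, parts (b) and (c) reduce to the intermediate value theorem; the only remaining delicacy is the bookkeeping in the choice of the subsequence in part~(c), namely arranging that $n(0)$ satisfies \emph{both} $\widetilde{t}_{n(0)} \ge L$ \emph{and} the property that $\varphi$ takes a value below $L$ on $E_{n(0)}$, which is what makes the $k=0$ instance $\varphi(E_{n(0)}) \supset E_{n(0)}$ valid.
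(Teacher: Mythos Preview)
Your proof is correct and follows essentially the same approach as the paper's. The paper is considerably more terse---it dismisses part~(a) in one sentence and proves part~(c) by simply saying ``choose $n(0)$ so that $\widetilde{t}_{n(0)} > \liminf_{t\to\infty}\varphi(t)$''---so your treatment of the $k=0$ case in part~(c), where you ensure that $n(0)$ is chosen among the indices at which $\varphi$ dips below $L$, is a genuine filling-in of detail that the paper leaves implicit; but the underlying idea (the new running maximum must be achieved on $E_n$, combined with the intermediate value theorem) is exactly the same.
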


\begin{proof} The hypothesis of the lemma clearly implies part~(a).
It follows from the definitions of $ \wt{\varphi} $ and $ \wt{t}_n $ that
\begin{align}
\label{T1}
\varphi(\wt{t}_n) \leq \wt{\varphi}(\wt{t}_n) = \wt{t}_{n + 1}, \qfor n \in \N_0,
\end{align}
and
\begin{align}
\label{T2}
\wt{t}_{n + 2} = \wt{\varphi}(\wt{t}_{n+1}) = \varphi(t), \qfor \textrm{some } t \leq \wt{t}_{n + 1}.
\end{align}
Now we cannot have $ t < \wt{t}_n, $ for otherwise, using part (a),
\[ \varphi(t) \leq \wt{\varphi}(t) \leq \wt{\varphi}(\wt{t}_n) = \wt{t}_{n + 1} < \wt{t}_{n + 2},\]
which contradicts (\ref{T2}).  Thus $ t \in [\, \widetilde{t}_n, \widetilde{t}_{n+1} \, ] = E_n $ and, together with (\ref{T1}), this shows that
\[ \varphi(E_n) \supset E_{n+1}, \qfor n \in \N_0, \]
which proves part (b).

Part (c) follows  by choosing  $ n(0) $ so that $ \wt{t}_{n(0)} > \liminf_{t \to \infty} \varphi(t). $
\end{proof}

We now give the proofs of Theorems~\ref{fullequiv} and \ref{realslow}. The proof of Theorem \ref{fullequiv} makes use of a simple topological result, which is widely used, going back at least to \cite{E}, and stated explicitly in \cite[Lemma 1]{RS09}.   We quote here a version of this result which we also need later in the paper; compare \cite[Lemma 3.1]{Sixsmithmax}.
\begin{lemma}
\label{RSlemm}
Let $(E_j)_{j \in \N_0}$ be a sequence of compact sets in $ \C $, $(m_j)_{j \in \N_0}$ be a sequence of positive integers and $f: \C \to \wh{\C} $ be a continuous function such that
\[ f^{m_j}(E_j ) \supset E_{j+1} , \qfor  j \in \N_0. \]
Then there exists $\zeta\in E_0$ such that
\begin{equation*}
\label{feq}
f^{m_0+m_1+\cdots +m_n}(\zeta) \in E_{n+1}, \qfor n \in \N_0.
\end{equation*}
\end{lemma}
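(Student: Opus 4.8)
The plan is the standard ``shrinking sequence of non-empty compact sets'' argument. Set $s_{-1}:=0$ and $s_k:=m_0+m_1+\cdots+m_k$ for $k\in\N_0$, so that the asserted conclusion is the existence of $\zeta\in E_0$ with $f^{s_n}(\zeta)\in E_{n+1}$ for all $n\in\N_0$. For $N\in\N_0$ I would put
\[
B_N := \{ z \in E_0 : f^{s_k}(z) \in E_{k+1} \textrm{ for } 0 \le k \le N \}.
\]
Then $B_0\supseteq B_1\supseteq\cdots$, and once we know that every $B_N$ is non-empty and compact, the elementary fact that a decreasing sequence of non-empty compact sets has non-empty intersection (Cantor's intersection theorem) supplies a point $\zeta\in\bigcap_{N\in\N_0}B_N$, and any such $\zeta$ has exactly the required property.

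To see that $B_N\neq\emptyset$, I would work backwards along the hypotheses. Fix $N$, pick any $w_{N+1}\in E_{N+1}$, and then, using $E_{j+1}\subseteq f^{m_j}(E_j)$, choose successively $w_N,w_{N-1},\dots,w_0$ with $w_j\in E_j$ and $f^{m_j}(w_j)=w_{j+1}$, for $j=N,N-1,\dots,0$. A one-line induction on $k$ then gives $f^{s_k}(w_0)=w_{k+1}\in E_{k+1}$ for $0\le k\le N$, so $w_0\in B_N$. (Note that all the sets $E_j$ must be non-empty, else the conclusion fails, so this selection is possible.)

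For compactness, since $B_N\subseteq E_0$ and $E_0$ is compact it suffices to show $B_N$ is closed: if $z_l\in B_N$ and $z_l\to z$, then $z\in E_0$, and at each of the selected times $s_k\le s_N$ we have $f^{s_k}(z_l)\in E_{k+1}$, a compact subset of $\C$; passing to the limit along $l$ then gives $f^{s_k}(z)\in E_{k+1}$, so $z\in B_N$. When $f$ is entire this last step is immediate, since every iterate of $f$ is continuous on all of $\C$; for a general continuous $f\colon\C\to\wh\C$ the point that needs care — and this is really the only delicate step — is ruling out that an intermediate iterate of the limit point $z$ reaches $\infty$ before time $s_N$, which is precisely what the membership $f^{s_k}(z_l)\in E_{k+1}$ at the selected times, together with the continuity of $f$, is used to prevent.
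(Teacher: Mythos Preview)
The paper does not actually prove this lemma; it is stated without proof and attributed to \cite[Lemma~1]{RS09} (with origins going back to \cite{E}), so there is no ``paper's own proof'' to compare against. Your argument is the standard one and is correct: the nested sequence $B_0\supset B_1\supset\cdots$ of non-empty compact sets has non-empty intersection, and any point in that intersection does the job.

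One small remark on your handling of the $\wh{\C}$ issue. The cleanest way to dispose of it is to note that the hypothesis $f^{m_j}(E_j)\supset E_{j+1}$ implicitly requires $f^{m_j}$ to be defined (and hence continuous) as a map $E_j\to\wh{\C}$. Then the closedness of $B_N$ follows by induction on the stages: from $f^{m_0}(z_l)\in E_1$ and continuity of $f^{m_0}$ on $E_0$ you get $f^{m_0}(z)\in E_1$; from $f^{m_1}$ being continuous on $E_1$ you then get $f^{s_1}(z)\in E_2$; and so on. This avoids having to worry about intermediate iterates between the selected times $s_k$, which is where your final sentence is slightly imprecise (membership at the selected times alone does not directly control intermediate iterates, but the implicit well-definedness of $f^{m_j}$ on $E_j$ does).
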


\begin{proof}[Proof of Theorem \ref{fullequiv}]
We show that (a) $ \Rightarrow $ (b) $ \Rightarrow $ (c) $ \Rightarrow $ (d) $ \Rightarrow $ (e), and then that (e) $ \Rightarrow $ (c).  Since it is clear that (d) $ \Rightarrow $ (a), this will prove the theorem.

It is obvious that (a) $ \Rightarrow $ (b) and that (d) $ \Rightarrow $ (e).

Suppose~(b) holds, and that $ t' > 0 $ is such that the set $ \lbrace \varphi^n(t'): n \in \N_0 \rbrace $ is unbounded.  Then, for $ t \geq t' $, there is an integer $ N = N(t) \in \N $ such that $ \varphi^{N-1}(t') \leq t $ and $ \varphi^N(t') > t. $  It follows that
\[
\wt{\varphi}(t) \geq \wt{\varphi}(\varphi^{N-1}(t')) \geq \varphi^N(t') > t,
\]
which proves~(c) with $ T = t' $.

Now suppose (c) holds, and that $ T>0 $ is such that $ \widetilde{\varphi}(t) > t $ for $ t \geq T $.  Define
\[ E_n=[\, \widetilde{\varphi}^{\, n}(T), \widetilde{\varphi}^{\, n+1}(T) \, ] , \qfor n \in \N_0. \]
Then it follows by Lemma \ref{RSlemm} and Lemma~\ref{key} part~(b) that there is a point $ t \in E_0 $ such that $ \varphi^n(t) \in E_n, $ for $ n \in \N_0, $ and therefore $ \varphi^n(t) \geq \wt{\varphi}^{\, n}(T) \to \infty, \textrm{  as  } n \to \infty. $  Moreover, the sequence $ ( \varphi^n(t) ) $, $ n \in \N_0, $ is strictly increasing, since otherwise it would eventually be constant.  This proves~(d).

It remains to prove that (e) $ \Rightarrow $ (c).  Suppose (e) holds, and let $ (t_n) $ be a positive sequence that tends to $ \infty $ as $ n \to \infty $, with $ \varphi(t_n) \geq t_{n+1} $ for $  n \in \N_0.$  Set $ T = t_0. $  If $ t \geq T $, then $ t \in [ \, t_n, t_{n+1} \, ) $ for some $ n \in \N_0 $, so
\[
\wt{\varphi}(t) \geq \varphi(t_n) \geq t_{n+1} > t,
\]
which proves (c) and completes the proof of the theorem.
\end{proof}

\begin{proof}[Proof of Theorem \ref{realslow}]
 By Theorem~\ref{fullequiv},  there exists $T>0$ such that $\widetilde{\varphi}(t) > t$ for $t \geq T$. Thus it follows from Lemma~\ref{key} that the sequence  $\wt{t}_n = \widetilde{\varphi}^n(T)$  is strictly increasing and that, if $E_n = [\, \widetilde{t}_n, \widetilde{t}_{n+1} \, ] $ for $ n \in \N_0$, then
\begin{align}
\label{P1}
\varphi(E_n) \supset E_{n+1}, \qfor n \in \N_0.
\end{align}
Furthermore, since (\ref{limf}) holds, it follows from Lemma~\ref{key} part~(c) that there is a subsequence $ (E_{n(k)}), \, k \in \N_0, $ such that
\begin{align}
\label{P2}
\varphi(E_{n(k)}) \supset E_{n(k)}, \qfor k \in \N_0.
\end{align}
 We now construct a new sequence of intervals by selecting terms from the sequence $ (E_n) $ in such a way that we can apply Lemma \ref{RSlemm} and hence deduce the existence of $ t_a > 0 $ and $ N_a \in \N $ with the properties in the statement of the theorem.  The basic idea is that, in this new sequence, we repeat each of the intervals $ E_{n(k)} $ sufficiently often that, by using property (\ref{P2}), we `slow down' the rate at which $ \varphi^n(t_a) \to \infty $ to ensure that (\ref{slowseq}) is satisfied.

Without loss of generality, we can assume that $ a = (a_n) $ is increasing.  Let the  new  sequence $ (F_m), \, m \in \N_0, $ consist of all the intervals $ E_n $ for $ n \geq n(0) $, taken in order of increasing $ n $, but with each interval $ E_{n(k)}, \, k \in \N_0, $ repeated $ m(k) $ times, where $ m(k) $ is so large that
\[ \bigcup_{n = n(0)}^{n(k+1)} E_n \subset [ \, 0, a_{m(k)} \, ] .\]
Since the interval $ E_{n(k)} $ is repeated $ m(k) $ times in the  new  sequence $ (F_m) $, it follows that if $ F_m = E_n, $ where $n(k)+1 \leq n \leq n(k+1)$, then $ m \geq m(k). $  Thus, for such~$ n $, we have
\[ F_m = E_n \subset [ \, 0, a_{m(k)} \, ] \subset [ \, 0, a_m \, ], \]
and it follows that
\begin{align}
\label{slowed}
F_m \subset [ \, 0, a_m \, ], \qfor m \geq m(0).
\end{align}
Now by (\ref{P1}) and (\ref{P2}) we have
\[ \varphi(F_m) \supset F_{m+1}, \qfor m \in \N_0, \]
so applying Lemma \ref{RSlemm} we obtain $ t_a \in F_0 $ such that $ \varphi^m(t_a) \in F_m,$ for $ m \in \N_0$. Clearly  $ \varphi^m(t_a) \to \infty $ as $ m \to \infty $, and it follows from (\ref{slowed}) that
\begin{align*}
\varphi^m(t_a) \leq a_m,  \quad \textrm{  for  } m \geq m(0).
\end{align*}
This completes the proof.
\end{proof}

\section{Classes of functions for which condition (\ref{minmodprop}) holds}
\label{Functions}
\setcounter{equation}{0}

In this section  we prove Theorem \ref{funcs}, which lists a number of large classes of \tef s for which condition (\ref{minmodprop}) holds.
A key step in the proof of parts~(a)--(d) of Theorem~\ref{funcs} is the following simple lemma, which will also be needed in Section~\ref{unbFat}.

\begin{lemma}
\label{suffcon}
Suppose $ f $ is a \tef\ with the property that there exist $ C>1 $ and $ R_0>0 $ such that, for $ r \geq R_0 $,
\begin{equation}\label{maxmin}
\text{there exists } s  \in (r, r^C) \text{ with } m(s) \geq M(r).
\end{equation}
Then condition (\ref{minmodprop}) is satisfied and, more strongly,
\begin{equation}\label{mtgrowth}
\frac{\log \mt(r)}{\log r} \to \infty\quad \text{as } r\to \infty;
\end{equation}
in particular, for~$r$ sufficiently large, we have
\begin{equation}\label{mtzips}
\frac{\log^+\log^+ \mt^n(r)}{n}\to \infty\quad \text{as } n\to \infty.
\end{equation}
\end{lemma}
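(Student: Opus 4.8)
The plan is to show directly that the hypothesis \eqref{maxmin} forces $\mt$ to grow faster than any power of $r$, and then to extract the iterated consequences \eqref{minmodprop} and \eqref{mtzips} as essentially bookkeeping. The starting observation is that $\mt(r) \geq m(s)$ for every $s \leq r$, so \eqref{maxmin} says that for $r \geq R_0$ there is some $s < r^C$ with $\mt(s) \geq m(s) \geq M(r)$; taking $s$ as large as $r^C$ only makes $\mt$ larger, so $\mt(r^C) \geq M(r)$ for all $r \geq R_0$. Equivalently, writing $\rho = r^C$, we get $\mt(\rho) \geq M(\rho^{1/C})$ for $\rho \geq R_0^C$. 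Since $M$ is the maximum modulus of a transcendental entire function, $\log M(t)/\log t \to \infty$ as $t \to \infty$ (a standard fact, e.g.\ from the Hadamard three-circles convexity of $\log M$ together with transcendence). Therefore
\[
\frac{\log \mt(\rho)}{\log \rho} \geq \frac{\log M(\rho^{1/C})}{\log \rho} = \frac{1}{C}\cdot\frac{\log M(\rho^{1/C})}{\log \rho^{1/C}} \to \infty \qas \rho \to \infty,
\]
which is exactly \eqref{mtgrowth}. In particular $\mt(r) > r$ for all large $r$, so \eqref{mtequiv} holds, and hence by the equivalence in Theorem~\ref{fullequiv} (applied with $\varphi = m$, noting $\wt{\varphi} = \mt$), condition \eqref{minmodprop} is satisfied.

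It remains to deduce \eqref{mtzips} from \eqref{mtgrowth}. Fix $K > 1$ arbitrary; by \eqref{mtgrowth} there is $R_K$ so that $\mt(r) \geq r^K$ for $r \geq R_K$, and we may also assume (enlarging $R_K$) that $R_K \geq R_0$ and $R_K^{K-1} \geq R_K$, say $R_K > 1$. Choose $r$ so large that $r \geq R_K$; then $\mt(r) \geq r^K \geq r \geq R_K$, so by induction $\mt^n(r) \geq r^{K^n}$ for all $n \in \N$, as long as each iterate stays above $R_K$, which it does since the sequence is increasing and starts above $R_K$. Taking logarithms twice,
\[
\log^+\log^+ \mt^n(r) \geq \log\bigl(K^n \log r\bigr) = n\log K + \log\log r,
\]
provided $r$ is large enough that $\log\log r \geq 0$ and $\mt^n(r) \geq e$, which again holds for all $n$ once $r > e$. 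Dividing by $n$ gives $\liminf_{n\to\infty}\log^+\log^+\mt^n(r)/n \geq \log K$, and since $K$ was arbitrary this yields \eqref{mtzips}.

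I expect the only genuinely substantive point to be the passage from \eqref{maxmin} to the inequality $\mt(r^C) \geq M(r)$ and the invocation of $\log M(t)/\log t \to \infty$; everything after that is routine iteration of a polynomial-type lower bound. One should be slightly careful that \eqref{maxmin} is only assumed for $r \geq R_0$, so all the displayed inequalities above are asserted only for $r$ (or $\rho$) beyond an explicit threshold, and the phrase ``for $r$ sufficiently large'' in the statement of \eqref{mtzips} absorbs this. No compactness or topological input is needed here; the earlier Lemmas~\ref{key} and~\ref{RSlemm} are used only indirectly, through Theorem~\ref{fullequiv}, to convert \eqref{mtequiv} into \eqref{minmodprop}.
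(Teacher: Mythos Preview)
Your proof is correct and follows essentially the same route as the paper: derive $\mt(r^C)\ge M(r)$ from \eqref{maxmin}, combine with $\log M(r)/\log r\to\infty$ to obtain \eqref{mtgrowth}, invoke Theorem~\ref{fullequiv} for \eqref{minmodprop}, and then read off \eqref{mtzips} by iterating the bound $\mt(r)\ge r^K$. The paper's own proof is in fact terser, merely asserting that \eqref{mtzips} ``follows easily from \eqref{mtgrowth}''; your expanded argument for that step is fine, though note that as written you choose $r\ge R_K$ after fixing $K$, whereas \eqref{mtzips} asks for a single threshold for~$r$ --- this is harmless since for any $r$ with $\mt^n(r)\to\infty$ one eventually has $\mt^N(r)\ge R_K$ and the induction runs from there.
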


\begin{proof}
By \eqref{maxmin}, we have
\begin{equation}\label{mtbig}
 \mt(r^C) \geq m(s) \geq M(r),\qfor r \geq R_0.
\end{equation}
Then \eqref{mtgrowth} follows from \eqref{mtbig} together with the well known fact that if~$f$ is a \tef, then
\begin{equation}\label{growthM(r)}
\frac{\log M(r)}{\log r} \to \infty\quad \text{as } r\to \infty,
\end{equation}
and (\ref{minmodprop}) follows from \eqref{mtgrowth} and Theorem \ref{fullequiv}. Finally, \eqref{mtzips} follows easily from \eqref{mtgrowth}.
\end{proof}

To prove Theorem \ref{funcs}, we first show that each of the classes (a)--(d) in the theorem meets the condition in Lemma \ref{suffcon}.  The same arguments arose in the context of functions satisfying \cite[Corollary 8.3(a)]{RS10a} but we include a summary here for completeness.  Class~(e) in the theorem has a separate proof.

We first define some terms needed here and later in the paper.   The \textit{order} $ \rho $, \emph{lower order} $ \lambda $ and \textit{type} $ \tau $ of an entire function $ f $ are defined by
\begin{align*}
 \rho = \limsup_{r \to \infty} \dfrac{\log \log M(r,f)}{\log r},
\end{align*}
\begin{align*}
 \lambda = \liminf_{r \to \infty} \dfrac{\log \log M(r,f)}{\log r}
\end{align*}
and
\[ \tau = \limsup_{r \to \infty} \dfrac{\log M(r,f)}{r^{\rho}}.\]
If $ \tau = 0, $ then $ f $ is said to be of \textit{minimal type}.

It was proved by Baker \cite[Satz 1]{iB58} that a \tef\ $ f $ of order less than $ 1/2 $ satisfies (\ref{maxmin}) for sufficiently large values of~$ C $; this also follows from the version of the $ \cos \pi \rho $ theorem proved by Barry~\cite{Ba63}.  This establishes  Theorem ~\ref{funcs} part (a).

Next we consider functions with suitable gaps in their power series expansions. A \tef\ $ f $ is said to have \textit{Fabry gaps} if
\begin{align}
\label{pwer}
f(z) = \sum_{k = 0}^\infty a_kz^{n_k},
\end{align}
where $ n_k/k \to \infty $ as $ k \to \infty. $ It follows from a result of Fuchs \cite[Theorem 1]{Fu} that, if $ f $ has finite order and Fabry gaps, then for each $ \varepsilon>0 $,
\[ \log m(r) > (1 - \varepsilon) \log M(r), \]
for values of $ r $ outside a set of zero logarithmic density.  It is easy to check that this implies that functions of finite order with Fabry gaps satisfy (\ref{maxmin}) for $ C>1 $, which proves Theorem~\ref{funcs} part~(b).

Hayman \cite[Theorem 3]{Hay} showed that the conclusion of Fuchs' result holds for \tef s of \emph{any} order provided that a stronger gap condition is satisfied, which we call \emph{Hayman gaps}.  The condition is that, in the expansion (\ref{pwer}), we have
\[ n_k > k \log k (\log \log k)^\alpha,\]
for some $ \alpha>2 $ and sufficiently large values of $ k. $  As before, it follows that such functions satisfy (\ref{maxmin}) for $ C>1 $, which proves Theorem~\ref{funcs} part~(c).

Next, we consider functions with the pits effect.  Loosely speaking, a function exhibits the \textit{pits effect} if it has very large modulus except in small regions (pits) around its zeros.  Littlewood and Offord \cite{LO} showed that, if $ \sum_{n = 0}^\infty a_n z^n $ is a \tef\ of order $ \rho \in (0, \infty) $ and lower order $ \lambda > 0 $, and if
\[ S = \left \lbrace f : f(z) = \sum_{n = 0}^\infty \varepsilon_n a_n z^n \right \rbrace \]
where the $ \varepsilon_n $ take the values $ \pm 1 $ with equal probability then, in some precise sense, almost all functions in the set $ S $ exhibit the pits effect. For such functions, it is shown in \cite[Proof of Example 2]{RS10a} that, if $ |z| = r $, then
\[ \log |f(z)| > \frac{1}{4} \log M(r), \]
outside a set of values of~$r$ of finite logarithmic measure.  Again, this is sufficient to show that (\ref{maxmin}) holds for large values of $ C $, and this proves Theorem~\ref{funcs} part~(d).

Finally, we consider functions with multiply connected Fatou components,  that is, multiply connected components of the Fatou set.  The papers \cite{BRS10} and \cite{RS15} give a very detailed analysis of dynamical behaviour in such components, including the following result about the existence of large annuli whose union is forward invariant \cite[Lemma~3.3]{RS15}. Here we use the notation $\delta(r):=1/\sqrt{\log r}$ and also
\[
A(r,R):=\{z:r<|z|<R\},\quad 0<r<R.
\]

\begin{lemma}\label{nested}
Let $f$ be a {\tef} with a {\mconn} Fatou component. Then there exist sequences $(r_n)$ and $(k_n)$, with $r_n > 1$ and $k_n>1$, for $n\in \N_0$, such that the annuli
%\begin{equation}\label{An}
%A_n=A(r_n,r_n^{k_n}),\;\;n\ge 0,
%\end{equation}
%and
\[
A'_n=A(r_n^{1+6\pi\delta_n},r_n^{k_n(1-6\pi\delta_n)}),\;\;\text{where }\delta_n=\delta(r_n),\;n\in \N_0,
\]
have the properties that, for $n\in \N_0$,
%\begin{itemize}
%\item[(a)] For $n\ge 0$,
%\begin{equation}\label{propn-3}
%f(A_n)\supset A_{n+1}
%\end{equation}
%and
\[
f(A'_n)\subset A'_{n+1},
\]
and
%\item[(b)]
% The annuli $A'_n$, $n\ge 0$, lie in distinct {\mconn} Fatou components~$U_n$ of $f$ and $f(U_n)=U_{n+1}$, for $n\ge 0$.
%\item[(c)] For $n\ge 0$,
\[
r_{n+1}= M(r_n) > r_n^{16}.
\]
%\begin{equation}\label{propn-1a}
%r_{n+1}^{k_{n+1}}=M(r_n^{k_n-2\delta_n})^{1-2\pi\delta_n},
%\end{equation}
%and
%\begin{equation}\label{propn-2}
%1+20\delta_0\le k_0<\frac{\log r_1}{\log r_0},\;\;k_{n+1}\ge k_n(1-9\delta_n)\ge 1+20\delta_{n+1} \;\;\text{and}\;\; r_n^{k_n}<r_{n+1}.
%\end{equation}
\end{lemma}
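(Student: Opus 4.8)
The plan is to deduce Lemma~\ref{nested} from the structure theory of \mconn\ Fatou components, originating with Baker and developed in detail by Bergweiler, Rippon and Stallard in \cite{BRS10} --- which is how the lemma is established in \cite{RS15}. By Baker's theorem, a \mconn\ Fatou component $U$ of a \tef\ is a bounded \wand\ on which $f^n\to\infty$ locally uniformly; writing $U_n$ for the Fatou component containing $f^n(U)$, each $U_n$ with $n$ large surrounds the origin, the component of $\C\setminus U_n$ containing $0$ being bounded and growing without bound. In particular $f$ has no zeros in $U_n$, since otherwise $f(U_n)$, and hence $U_{n+1}$, would contain $0$. Fix $z_0\in U$ and set $s_n:=|f^n(z_0)|\to\infty$.

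\textbf{The deep input.} The essential ingredient is the detailed description of \mconn\ \wand s in \cite{BRS10}: there is $N$ such that, for $n\ge N$, the component $U_n$ contains a round annulus $C_n=A\bigl(s_n^{1-c\delta(s_n)},s_n^{1+c\delta(s_n)}\bigr)$ centred at $0$, where $\delta(r)=1/\sqrt{\log r}$ and $c$ is a fixed constant (which we may take comfortably larger than $12\pi$), and the orbit grows so fast that $\log s_{n+1}/\log s_n\to\infty$. Although $C_n$ is thin on the exponential scale, its modulus is of order $c\,\delta(s_n)\log s_n=c\sqrt{\log s_n}\to\infty$, so $C_n$ is a \emph{fat} annulus. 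Since $f(C_n)\subseteq U_{n+1}$ lies far from $0$, the function $\log|f|$ is positive and harmonic on $C_n$, and Harnack's inequality on this fat annulus shows that $\log|f|$ varies by a factor $1+o(1)$ over the middle circles of $C_n$; in particular $\log m(\rho)=(1+o(1))\log M(\rho)$ there. It is this, and not any (false) monotonicity of $m$, that supplies a uniform lower bound for $|f|$ over a whole round annulus; it also shows that $s_{n+1}$ agrees with $M(s_n)$ up to the $o(1)$ in the exponent, whence $M(s_n)>s_n^{16}$ for $n$ large.

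\textbf{The conversion.} Choose $r_0:=s_N$ with $N$ large and set $r_{n+1}:=M(r_n)$. From $\log s_{n+1}=(1+o(1))\log M(s_n)$ one verifies inductively that $\log r_n=(1+o(1))\log s_{N+n}$, so $U_{N+n}$ still contains a fat zero-free annulus straddling $|z|=r_n$, and $r_{n+1}=M(r_n)>r_n^{16}$. Write $\delta_n:=\delta(r_n)$ and choose $k_n>1$ just above $\frac{1+6\pi\delta_n}{1-6\pi\delta_n}$, small enough that
\[
A'_n:=A\bigl(r_n^{1+6\pi\delta_n},\,r_n^{k_n(1-6\pi\delta_n)}\bigr)
\]
is a nonempty annulus comfortably contained in the annulus in $U_{N+n}$ --- the room between the constant $c$ in the hypothesis and the factor $6\pi$ in the conclusion being what absorbs all the error terms. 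For $z\in A'_n$ one then estimates, using $m(\rho)=M(\rho)^{1+o(1)}$ on the relevant range, the monotonicity of $M$, and $r_{n+1}=M(r_n)$: the lower bound $|f(z)|\ge M(|z|)^{1-o(1)}\ge M\bigl(r_n^{1+6\pi\delta_n}\bigr)^{1-o(1)}\ge r_{n+1}^{1+6\pi\delta_{n+1}}$ and the upper bound $|f(z)|\le M(|z|)\le M\bigl(r_n^{k_n(1-6\pi\delta_n)}\bigr)\le r_{n+1}^{k_{n+1}(1-6\pi\delta_{n+1})}$, the latter once $k_{n+1}$ is chosen appropriately (using the rapid growth of $M$ once more). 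Hence $f(A'_n)\subseteq A'_{n+1}$, which is the assertion of the lemma.

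\textbf{Main obstacle.} Essentially all the real content lies in the quoted facts from \cite{BRS10}: that $U_n$ contains a round annulus of (almost) the widest possible kind straddling $|z|=s_n$, and that the orbit grows super-polynomially. Reproving these would require the logarithmic change of variable and the hyperbolic-metric covering estimates underlying \cite{BRS10}, which are well beyond a short sketch. Granting them, the remaining task is the careful propagation of the $\delta(r)=1/\sqrt{\log r}$ error terms through a single application of $f$ --- arranging simultaneously that $r_{n+1}=M(r_n)$, that $k_n>1$, that $A'_n$ is a nonempty subannulus of the given annulus in $U_{N+n}$, and that the two modulus bounds close up --- and this is routine bookkeeping once the Harnack-type estimate $\log m\approx\log M$ on fat zero-free annuli is in hand.
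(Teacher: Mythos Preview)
The paper does not prove this lemma: it is quoted verbatim as \cite[Lemma~3.3]{RS15}, with the remark that \cite{BRS10} and \cite{RS15} give the detailed analysis of \mconn\ Fatou components from which it comes. So there is no in-paper proof to compare your sketch against; you are effectively outlining the argument behind the cited result, and your identification of the key inputs --- Baker's theorem on \mconn\ wandering domains, the fat round annuli inside $U_n$ from \cite{BRS10}, and the Harnack-type estimate $\log m(\rho)=(1+o(1))\log M(\rho)$ on those annuli --- is correct and is indeed how \cite{RS15} proceeds.

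That said, your handling of the sequence $(k_n)$ is internally inconsistent. You first fix $k_n$ ``just above'' $\tfrac{1+6\pi\delta_n}{1-6\pi\delta_n}$ so that $A'_n$ is a thin annulus sitting inside the \cite{BRS10} annulus, but then for the upper inclusion you require $k_{n+1}$ to be ``chosen appropriately'' so that $M\bigl(r_n^{k_n(1-6\pi\delta_n)}\bigr)\le r_{n+1}^{k_{n+1}(1-6\pi\delta_{n+1})}$. These two constraints pull in opposite directions: with your thin choice the outer radius of $A'_n$ is about $r_n^{1+6\pi\delta_n}$, and its image under $f$ (even using the Harnack control, let alone the crude bound $|f(z)|\le M(|z|)$) lands near $r_{n+1}^{1+6\pi\delta_n}$, which exceeds $r_{n+1}^{1+6\pi\delta_{n+1}}$ since $\delta_n>\delta_{n+1}$. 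In the actual argument the $k_n$ are not chosen minimally but are inherited from the geometry of the \cite{BRS10} annuli themselves, which already satisfy a forward-invariance relation; the constants $6\pi$ and the exponent structure in $A'_n$ are then carved out of that with room to spare. Your sketch would be fine if you dropped the ``just above'' stipulation and instead let $k_n$ be dictated by the width of the \cite{BRS10} annulus in $U_{N+n}$.
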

It follows from this lemma that if $r>0$ and $r\in A'_0$, then $m^n(r)\in A'_n$, so
\begin{equation}\label{MCWDminmod}
m^n(r) > r_n^{1+6\pi\delta_n}\ge r_n=M^n(r_0),\qfor n\in \N_0.
\end{equation}
Hence $m^n(r)\to \infty$ as $n\to\infty$, as required.

\section{The sets $ V(f) $ and $\vfp$ : proof of Theorem~\ref{fast}}
\label{setV}
\setcounter{equation}{0}

In this section we prove a number of basic properties of the sets $ V(f) $ and $\vfp$, starting with Theorem~\ref{fast}. Recall that, for a \tef\ such that (\ref{minmodprop}) holds, we define
\[ V(f) = \bigcup_{\ell \in \N_0} f^{-\ell} (V_R(f)), \,\, \,  \textrm{where} \, \, V_R(f) = \{ z: |f^n(z)| \geq \mt^n(R), \textrm{ for } n \in \N\},\]
and $ R>0 $ is such that $ \mt(r) > r $ for $ r \geq R $.

We first show that this definition is unambiguous.  To do this, we return to the question of the rate at which $ m^n(r) $ tends to infinity for a \tef\ satisfying (\ref{minmodprop}).  We showed in Theorem \ref{realslow} that $ m^n(r) $ can tend to infinity arbitrarily slowly.  By contrast, the \emph{fastest} rate at which $ m^n(r) $ can tend to infinity must be limited by the growth of $ \mt^n(r) $, and we now prove Theorem~\ref{fast} which shows that this fastest rate is always attained.

\begin{proof}[Proof of Theorem \ref{fast}]
If $ f $ is a \tef\ such that (\ref{minmodprop}) holds, then it follows from Theorem \ref{fullequiv} that there exist $R>0$ such that
\begin{equation}\label{mtcond1rep}
\mt^n(R) \to \infty \textrm{ as } n \to \infty,
\end{equation}
and $ r \geq R $ such that
\begin{equation}
\label{mtcondrep}
 m^n(r) \geq \mt^n(R), \textrm{ for } n \in \N.
\end{equation}
We now show that the set
\[ V_1(R) = \bigcup_{\ell \in \N_0} f^{-\ell} \left( \{ z: |f^n(z)| \geq \mt^n(R), \textnormal{ for } n \in \N\} \right)\]
is independent of the choice of $ R $ satisfying (\ref{mtcond1rep}) and, for $ r \geq R $  satisfying (\ref{mtcondrep}), is equal to the set
\[ V_2(r) = \bigcup_{\ell \in \N_0} f^{-\ell} \left( \{ z: |f^n(z)| \geq m^n(r), \textnormal{ for } n \in \N\} \right). \]
Suppose $ R'>R, $ where $ R $ satisfies (\ref{mtcond1rep}).  Then since $ \mt^n(R') \geq \mt^n(R) $ for $ n \in \N, $ it is clear that $ V_1(R') \subset V_1(R). $
On the other hand, by (\ref{mtcond1rep}) there exists $ k \in \N $ such that $ \mt^k(R)\geq R', $ so for $ z \in V_1(R) $ there exists $ \ell \in \N_0 $ such that
\[ |f^{n + \ell}(z)| \geq \mt^n(R) \geq \mt^{n-k}(R'), \qfor n \geq k. \]
Hence $ |f^{n + \ell + k}(z)| \geq \mt^n(R') $ for $ n \in \N_0, $ and therefore $ V_1(R) \subset V_1(R'). $  It follows that $ V_1(R) = V_1(R') $ and thus that $  V_1(R) $ is independent of the choice of $ R. $

The proof that $ V_1(R) = V_2(r) $ is similar.  Suppose $ r $ and $ R $ satisfy (\ref{mtcond1rep}) and (\ref{mtcondrep}). Then (\ref{mtcondrep}) clearly implies that $ V_2(r) \subset V_1(R) $ and, since by (\ref{mtcond1rep}) there exists $ j \in \N $ such that $ \mt^j(R)\geq r, $ we deduce as above that $ V_1(R) \subset V_2(r). $
\end{proof}

It follows from Theorem \ref{fast} that, if (\ref{minmodprop}) holds, then the set $ V(f) $
is well defined and independent of $ R $, provided $ R>0 $ is such that (\ref{mtcond1rep}) holds, or equivalently that $ \mt(r) > r $ for $ r \geq R $.

Now recall that
\[ \vfp = \bigcup_{\ell \in \N_0} f^{-\ell} (\vfpr), \]
where
\[
\vfpr = \{ z: \exists \,\, (n_j) \textrm{ such that } |f^{n_j}(z)| \geq \mt^{n_j}(R), \textrm{ for } j \in \N\}.
\]
In this definition, $R>0$ satisfies $ \mt(r) > r $ for $ r \geq R $ and $ (n_j) $ is a strictly increasing sequence of positive integers, which in general depends on~$ z $.   Using an argument similar to the proof of Theorem~\ref{fast}, it is easy to see that  if $ f $ is a \tef\ such that condition (\ref{minmodprop}) holds, and $r \geq R > 0$ satisfy \eqref{mtcond1rep} and \eqref{mtcondrep}, then
\begin{equation}\label{Vpluseqn}
\vfp = \bigcup_{\ell \in \N_0} f^{-\ell}(\{ z: \exists \, (n_j) \textrm{ such that } |f^{n_j}(z)| \geq m^{n_j}(r), \textrm{ for } j \in \N\}).
\end{equation}
Thus, the set $ \vfp $ is also well defined and independent of $ R $, provided $ R>0 $ is such that $ \mt(r) > r $ for $ r \geq R $.

We now show that $V(f)$ and $\vfp$ have some of the basic properties of $I(f)$ proved in \cite{E}.  There, Eremenko showed that
\begin{equation}\label{Iprops}
I(f)\neq \emptyset,\;\; I(f)\cap J(f)\neq\emptyset,\;\;J(f)=\partial I(f),
\end{equation}
and that $\overline{I(f)}$ has no bounded components. (Recall that the fast escaping set $A(f)$ also has the properties listed in~\eqref{Iprops}, and in addition $A(f)$ has no bounded components;  see~\cite{BH99},~\cite{RS05} and~\cite{RS10a}.)  For $V(f)$ and $\vfp$, we have the following.

\begin{theorem}\label{Vprops}

\begin{itemize}
\item[(a)] Let $f$ be a {\tef} such that \eqref{minmodprop} holds.  Then
\begin{equation}\label{Vprops1}
V(f)\neq \emptyset,\;\; V(f)\cap J(f)\neq\emptyset,\;\;J(f)=\overline{V(f)\cap J(f)},\;\;J(f)\subset\partial V(f),
\end{equation}
and $\overline{V(f)}$ has no bounded components.
\item[(b)] If, in addition,
\begin{equation}\label{mtg1}
\liminf_{r\to\infty}\frac{\mt(r)}{r} >1,
\end{equation}
then every Fatou component of~$f$ that meets $V(f)$ must lie entirely in $V(f)$, and $J(f)=\partial V(f)$.
\item[(c)] The properties in~(a) and~(b) hold if $V(f)$ is replaced by $\vfp$.
\end{itemize}
\end{theorem}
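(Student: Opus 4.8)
The plan is to follow the classical template that Eremenko used for $I(f)$ and that was adapted for $A(f)$, exploiting the fact that, by Theorem~\ref{fast}, the set $V_R(f)$ can be described using the genuinely escaping sequence $m^n(r)$ rather than just $\mt^n(R)$. First I would prove non-emptiness: part~(d) of Theorem~\ref{fullequiv} gives a point $t$ with $\varphi^n(t)=m^n(r)$ realising the fastest escape, and combining the covering relation $\varphi(E_n)\supset E_{n+1}$ from Lemma~\ref{key}(b) with the complex topological Lemma~\ref{RSlemm} applied to the circles $\{|z|=\wt t_n\}$ (or to closed annuli lying over the intervals $E_n$) produces a point $\zeta$ with $|f^n(\zeta)|\ge\wt\varphi^{\,n}(T)$ for all $n$, hence $\zeta\in V_R(f)\subset V(f)$, so $V(f)\neq\emptyset$. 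For $V(f)\cap J(f)\neq\emptyset$ one argues as usual: $V(f)$ cannot lie entirely in $F(f)$, because a Fatou component on which all iterates escape at least as fast as $\mt^n(R)\to\infty$ would have to be a wandering domain or Baker domain whose escape rate is incompatible with the fact (standard for $I(f)$, via normal-family/Montel arguments, or via the bounded-orbit behaviour near $\partial V(f)$) — more cleanly, since $V(f)$ is completely invariant and $J(f)=\overline{\bigcup_n f^{-n}(z_0)}$ for suitable $z_0$, and $V(f)$ being backward invariant meets every neighbourhood of every Julia point once it meets $J(f)$ at all, it suffices to exhibit one escaping point in $J(f)$; this follows because the point $\zeta\in V_R(f)$ constructed above can be taken on $J(f)$ by instead applying Lemma~\ref{RSlemm} to a nested sequence of annuli shrinking onto $J(f)$, using that $J(f)$ is unbounded and that $f$ maps a suitable boundary annulus over the next.

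Next, for the density and boundary statements: $J(f)=\overline{V(f)\cap J(f)}$ follows because $V(f)\cap J(f)$ is a non-empty backward-invariant subset of $J(f)$, and any such set is dense in $J(f)$ by the blowing-up property of the Julia set (Montel). The inclusion $J(f)\subset\partial V(f)$ then follows: every point of $J(f)$ is a limit of points of $V(f)\cap J(f)\subset V(f)$, and also a limit of points \emph{not} in $V(f)$ — for the latter I would use that $V(f)\subset\overline{I(f)}$... actually more directly, a point $w\in J(f)$ has in every neighbourhood a repelling periodic point (or a point with bounded orbit) which cannot lie in $V(f)$ since $V(f)\subset I(f)\cup\{\text{backward images}\}$; concretely, $V(f)\cap\{$periodic points$\}=\emptyset$ because $\mt^n(R)\to\infty$, and repelling periodic points are dense in $J(f)$, so no neighbourhood of $w$ is contained in $V(f)$. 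That $\overline{V(f)}$ has no bounded components is the one genuinely substantial topological step: I would argue that each $V_R(f)$ meets the circle $\{|z|=\rho\}$ for every large $\rho$, because the annulus-covering construction above can be started at any sufficiently large radius — more precisely, for every large $\rho$ there is $n$ with $\wt t_n\le\rho\le\wt t_{n+1}$, and the point of $V_R(f)$ we built lies in the closed annulus $\{\wt t_n\le|z|\le\wt t_{n+1}\}$, which we can force to have a point on $\{|z|=\rho\}$ by a connectedness argument on the preimage continua — hence every component of $\overline{V(f)}$, being closed and meeting arbitrarily large circles or else trapped, is forced to be unbounded by the standard argument (a bounded component would be separated from $\infty$ by an annulus in the complement, contradicting that $V_R(f)$ reaches every large circle).

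For part~(b), assume $\liminf_{r\to\infty}\mt(r)/r=c>1$. If a Fatou component $U$ meets $V(f)$, pick $z_0\in U\cap V_R(f)$; then $|f^{n}(z_0)|\ge\mt^{n}(R)$ and, because $\mt(r)\ge c\,r$ for $r\ge R$, the orbit of $z_0$ escapes geometrically, so on the hyperbolic metric of $U$ (or by Koebe/Bloch estimates, since $f^n$ maps a definite disc around $z_0$ into $\{|z|\ge\mt^n(R)\}$ and $\mt^{n}(R)/\mt^{n-1}(R)\ge c>1$) every point $z\in U$ satisfies $|f^n(z)|\ge\mt^n(R)/C$ for a constant $C=C(U)$, and then a further shift absorbs the constant exactly as in the $R\to R'$ argument inside the proof of Theorem~\ref{fast}, giving $z\in V_R(f)$ after finitely many iterates, i.e. $U\subset V(f)$. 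Consequently $V(f)$ is open along $J(f)$ from the Fatou side in the sense that $\partial V(f)\subset J(f)$, and combined with $J(f)\subset\partial V(f)$ from~(a) this yields $J(f)=\partial V(f)$. Finally, part~(c): every argument above used only the covering relation $\varphi(E_n)\supset E_{n+1}$ and complete invariance, together with the description via $m^n(r)$ given in~\eqref{Vpluseqn}; since $\vfp\supset V(f)$, non-emptiness and the density/closure statements are inherited or proved identically with subsequences, so the same proofs apply verbatim to $\vfp$.

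\medskip

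I expect the main obstacle to be the claim that $\overline{V(f)}$ has no bounded components: one must show carefully that the annulus-covering construction of points of $V_R(f)$ can be localised so as to produce, for each large radius $\rho$, a point of $\overline{V(f)}$ on or near $\{|z|=\rho\}$ that is connected within $\overline{V(f)}$ to the unbounded part — the cleanest route is probably to build, via Lemma~\ref{RSlemm} applied to a sequence of \emph{closed annuli} rather than intervals, a \emph{continuum} in $\overline{V_R(f)}$ joining a given large circle to circles of arbitrarily large radius, mimicking the corresponding argument for $A(f)$ in \cite{RS10a}, and then quote the standard fact that a closed set meeting all sufficiently large circles has only unbounded components.
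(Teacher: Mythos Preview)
Your approach differs substantially from the paper's, which exploits the inclusion $A(f)\subset V(f)$ far more heavily than you do. The first three properties in~\eqref{Vprops1} already hold for $A(f)$ by~\cite{RS10a}, and therefore hold for $V(f)$ immediately; no direct construction of a point of $V(f)$ or of $V(f)\cap J(f)$ is needed. For $J(f)\subset\partial V(f)$ your repelling-periodic-point argument is correct, though the paper's version (complete invariance gives $J(f)\subset\overline{V(f)}$, and every open subset of $V(f)\subset I(f)$ lies in $F(f)$) is shorter.

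The real problem is the ``no bounded components'' step, where your sketch has a genuine gap. First, a bounded component of a closed set can be separated from~$\infty$ by a Jordan curve (equivalently, an open topological annulus) in the complement, but not in general by a \emph{round} annulus; so even if $V_R(f)$ met every large circle $\{|z|=\rho\}$ you would not obtain a contradiction. Second, you have not established that $V_R(f)$ meets every large circle, and the continuum version you propose in the final paragraph requires a covering relation of the form $f(\overline{A_n})\supset\overline{A_{n+1}}$ for closed annuli, which does not follow from $m(E_n)\supset E_{n+1}$: the image of the annulus $\overline{A_n}$ need not contain the whole annulus $\overline{A_{n+1}}$. The paper sidesteps all of this with a short dynamical argument: a bounded component~$E$ of $\overline{V(f)}$ would be surrounded by an open topological annulus $A\subset\C\setminus\overline{V(f)}$; since $\overline{V(f)}$ is completely invariant and $J(f)\subset\overline{V(f)}$, this forces $A\subset F(f)$, so $A$ lies in a multiply connected Fatou component; but such components are contained in $A(f)\subset V(f)$ by~\cite{RS05}, a contradiction.

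For part~(b), your hyperbolic/Koebe idea is essentially Lemma~\ref{dist}, but that lemma applies only to \emph{simply connected} Fatou components. The paper therefore treats the multiply connected case separately (again via $U\subset A(f)\subset V(f)$), which you omit. The absorption of the constant~$C$ using $\mt^{n+k}(R)\ge C\,\mt^n(R)$, which follows from~\eqref{mtg1}, is exactly as you describe.
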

We make use of the following result, which is part of \cite[Theorem 3]{RS00}.

\begin{lemma}
\label{dist}
Let~$ f $ be a \tef. If~$ U $ is a simply connected component of~$ F(f) $ and $K$ is a compact subset of $U$, then there exists $ C=C(K) \in (1,\infty) $ such that
\[ \dfrac{|f^n(z_2)|}{|f^n(z_1)|+1} \leq C, \qfor z_1, z_2 \in K \textrm{ and } n \in \N. \]
\end{lemma}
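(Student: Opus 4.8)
\textbf{Proof proposal for Lemma~\ref{dist}.}

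The plan is to exploit the fact that $U$ is simply connected so that the iterates $f^n$, restricted to $U$, omit enough values to be controlled by the Koebe/Schwarz machinery, and then to extract a uniform distortion bound on a fixed compact set. First I would fix a domain $V$ with $K\subset V\Subset U$ and $V$ simply connected (for instance a finite union of round discs, or a slightly enlarged neighbourhood of $K$ inside $U$); it suffices to prove the estimate with $V$ in place of $K$. Since $U$ is simply connected and $f^n(U)\subset U\subset\C$, the map $f^n|_U$ omits at least the point at infinity, but more importantly $U$ (being a Fatou component of a transcendental entire function that is not $\C$) omits a value, so $f^n(U)$ omits a value too; hence each $f^n|_U$ is a map from the simply connected domain $U$ into a domain that misses at least one point of $\C$. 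Using a Riemann map $\psi:\D\to U$, the functions $g_n:=f^n\circ\psi:\D\to\C\setminus\{a_n\}$ form a family of functions omitting one value each.

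The key step is to pass to the hyperbolic metric. Because $f^n$ maps $U$ into $U$, the map $f^n|_U$ is non-expanding for the hyperbolic metric $\rho_U$ of $U$ by the Schwarz--Pick lemma: $\rho_U(f^n(z_1),f^n(z_2))\le \rho_U(z_1,z_2)$ for all $z_1,z_2\in U$. In particular, for $z_1,z_2\in V$ we get $\rho_U(f^n(z_1),f^n(z_2))\le \operatorname{diam}_{\rho_U}(V)=:D<\infty$, uniformly in $n$, since $\overline V$ is a compact subset of $U$. So all the points $f^n(z_1),f^n(z_2)$ ($z_1,z_2\in V$, $n\in\N$) lie within hyperbolic distance $D$ of each other inside $U$. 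Now I would compare the hyperbolic metric of $U$ with a more tractable one: since $U$ omits some value $w_0$ (and, enlarging if necessary, we may assume $U\subset\C\setminus\{w_0\}$ is a proper subdomain), we have $\rho_U\ge \rho_{\C\setminus\{w_0\}}$, and for the once-punctured plane there is the standard estimate $\rho_{\C\setminus\{w_0\}}(z,z')\ge c\,\bigl|\log|z-w_0|-\log|z'-w_0|\bigr|$ minus a bounded term, valid when $|z-w_0|$ and $|z'-w_0|$ are large. Translating $w_0$ to the origin (it is harmless, or one carries $w_0$ along), this yields
\[
\bigl|\log|f^n(z_1)| - \log|f^n(z_2)|\bigr| \le C_0 D + C_1,\qfor z_1,z_2\in V,\ n\in\N,
\]
for absolute constants $C_0,C_1$, once $|f^n(z_i)|$ is large; exponentiating gives a ratio bound $|f^n(z_2)|/|f^n(z_1)|\le C$ in the regime where the iterates are large. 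For the finitely many $n$ (or bounded range of values) where the iterates are not large, $|f^n|$ is bounded above on the compact set $\overline V$ and bounded below only trivially, but there the $+1$ in the denominator of the statement does the job: $|f^n(z_2)|/(|f^n(z_1)|+1)\le |f^n(z_2)| \le \max_{\overline V}|f^n|$, and — this is the point of the $+1$ — we don't need a lower bound when the numerator is itself bounded. Combining the two regimes gives a single constant $C=C(K)$.

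The main obstacle I expect is making the two-regime argument clean: one must show that ``$|f^n(z_1)|$ large for one $z_1\in V$'' forces ``$|f^n(z_2)|$ comparably large for all $z_2\in V$'', which is exactly what the hyperbolic estimate above gives, but care is needed because the omitted value $w_0$ is a fixed finite point, so $|f^n(z_i)|$ large is the same as $|f^n(z_i)-w_0|$ large, and the punctured-plane estimate degrades near $w_0$; one should phrase it as: there is $\rho_0>0$ with $|z-w_0|,|z'-w_0|\ge \rho_0$ implying the logarithmic comparison, and separately observe that if some $|f^n(z_1)-w_0|<\rho_0$ then by the hyperbolic bound \emph{all} $|f^n(z_2)-w_0|$ are bounded above (within the hyperbolic ball of radius $D$), so $|f^n(z_2)|$ is bounded and the $+1$ trick applies. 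A secondary point is the existence of an omitted value for $U$: a simply connected Fatou component of a transcendental entire function cannot be all of $\C$, and in fact omits infinitely many values (e.g. by Montel, or because $J(f)\ne\emptyset$ and $J(f)$ is contained in the complement of $U$ and is infinite), so $U$ is hyperbolic and the whole argument runs. Since this lemma is quoted from \cite[Theorem 3]{RS00}, I would in the write-up simply cite it, but the above is the proof I would give if required.
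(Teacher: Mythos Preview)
The paper does not prove this lemma; it is quoted from \cite[Theorem~3]{RS00}. Your outline has the right shape---contract hyperbolic distance via Schwarz--Pick, then convert to a modulus ratio---but two steps are wrong as written.

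First, $f^n$ need not map $U$ into itself: $f^n(U)$ lies in the Fatou component $U_n$, which in general differs from $U$, and the contraction should read $\rho_{U_n}(f^n(z_1),f^n(z_2))\le\rho_U(z_1,z_2)$. Second, and more seriously, $\C\setminus\{w_0\}$ is parabolic, not hyperbolic, so there is no metric $\rho_{\C\setminus\{w_0\}}$ to compare with; and if you repair this by passing to the twice-punctured plane $\C\setminus\{a,b\}$ with $a,b\in J(f)$, its hyperbolic density near $\infty$ is only of order $1/(|z|\log|z|)$, which integrates to give $|f^n(z_2)|\le(|f^n(z_1)|+1)^{C}$---a power bound, not the ratio bound the lemma asserts. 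What actually produces the ratio bound is the Koebe lower estimate $\rho_{\Omega}(z)\ge 1/(4\,\mathrm{dist}(z,\partial\Omega))$ applied with $\Omega=U_n$: fixing any $a\in J(f)$ one has $\mathrm{dist}(z,\partial U_n)=\mathrm{dist}(z,J(f))\le|z|+|a|$ for $z\in U_n$, hence $\rho_{U_n}(z)\ge 1/(4(|z|+|a|))$, and integrating along a path gives exactly the logarithmic comparison you wrote down. But the Koebe bound requires $U_n$ to be simply connected, and that does not follow automatically from $U$ being simply connected; this is where the argument needs care, and is why the lemma is less routine than your sketch suggests.
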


 \begin{proof}[Proof of Theorem \ref{Vprops}]
(a)\;The first three properties of $V(f)$ in \eqref{Vprops1} are immediate since these properties hold for $A(f)$  (see~\cite{RS10a})  and $A(f)\subset V(f)$.

Because $V(f)$ is infinite and completely invariant under~$f$, we have $J(f)\subset \overline{V(f)}$, and this implies that $J(f)\subset \partial V(f)$ since any open subset of $V(f)$ is contained in $F(f)$.

Finally, if $\overline{V(f)}$ has a bounded component, $E$ say, then there is an open topological annulus $A$ which surrounds $E$ and lies in the complement of $\overline{V(f)}$. Since $\overline{V(f)}$ is completely invariant  under $f$ we deduce that $A\subset F(f)$, and since $J(f)\subset\partial V(f)$ it follows  that $A$ is contained in a {\mconn} Fatou component. But any {\mconn} Fatou component of $f$ is contained in $A(f)$  (see~\cite{RS05})  and hence in $V(f)$, so we obtain a contradiction.

(b)\;The first statement of part~(b) is immediate if the Fatou component is multiply connected, since in that case it lies in $A(f)$; see \cite{RS05}. Otherwise this statement follows from Lemma~\ref{dist} and the hypothesis \eqref{mtg1}. Indeed, if~$U$ is a simply connected Fatou component of~$f$  and $z\in U\cap V(f)$, then there exists $\ell\in\N$ such that
\[
|f^{n+\ell}(z)|\ge \mt^n(R),\quad\text{for }n\in\N,
\]
where $ R>0 $ is such that $ \mt(r) > r $ for $ r \geq R $.  We deduce from Lemma~\ref{dist} that for any $z'\in U$ there exists $C(z') > 1$ such that, for each $n \in \N$,
\[
 |f^n(z')| \geq |f^n(z)| / C(z').
\]
The hypothesis \eqref{mtg1} implies that there exists $k=k(C(z'))\in\N$ such that
\[
\mt^{n+k}(R)=\mt^k(\mt^n(R)) > C(z')\mt^n(R),\quad \text{for } n \in \N.
\]
Thus
\[
|f^{n+\ell}(z')| \ge |f^{n+\ell}(z)|/C(z') \ge \mt^n(R)/C(z') \ge \mt^{n-k}(R),\quad \text{for } n > k,
\]
so $z'\in U\cap V(f)$, as required.

This property of Fatou components implies immediately that  $J(f)\supset \partial V(f)$. Since, by part~(a), we also have $J(f)\subset \partial V(f)$, it follows that $J(f)=\partial V(f)$ whenever \eqref{mtg1} holds.

(c)\;  Similar arguments show that the properties in parts (a) and (b) also hold for $\vfp$; we omit the details.
\end{proof}
%{\it Remarks} \;
\begin{remarks*}\normalfont
1.\;Note that, if $ f $ is a \tef\ such that \eqref{minmodprop} holds, then $ \vfp $ is connected by Theorem~\ref{weaksw}, from which it is immediate that $ \vfp $ and $ \overline{\vfp} $ have no bounded components.

2.\;It is natural to ask if the statement that $J(f)\subset \partial V(f)$ in  Theorem \ref{Vprops} part~(a) can be strengthened to $J(f)=\partial V(f)$ for all \tef s that satisfy \eqref{minmodprop}, and similarly for $V^+(f)$.
\end{remarks*}
Finally in this section, we record various relationships involving the connectedness  properties  of $V(f)$, $\vfp$, $I(f)$ and $\kfc$. Recall that a connected set $ E $ is a \sw\ if there exists a sequence $ (G_n) $ of bounded, \sconn\ domains such that
\[
G_n \subset G_{n+1}, \,\, \partial G_n \subset E, \,\, \textrm {for }  n \in \N, \qand \bigcup_{n \in \N} G_n = \C,
\]
and~$E$ is a weak \sw\ if its complement contains no unbounded closed connected sets.
\begin{theorem}
\label{VandI}
Let $f$ be a \tef\ such that \eqref{minmodprop} holds. We have the following implications:
\begin{itemize}
\item[(a)] if $V(f)$ is connected, then $I(f)$ is connected;
\item[(b)] if $V(f)$ is a (weak) spider's web, then $I(f)$ is a (weak) spider's web;
\item[(c)] if $\vfp$ is a spider's web, then $\kfc$ is a spider's web.
\end{itemize}
\end{theorem}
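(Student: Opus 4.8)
The plan is to obtain parts~(b) and~(c) as short consequences of part~(a), together with the inclusions $V(f)\subseteq I(f)$ and $\vfp\subseteq\kfc$ and, for part~(c), the fact (from Theorem~\ref{weaksw}) that $\kfc$ is always connected; the substantive content lies in part~(a).

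For~(b): if $V(f)$ is a weak \sw, then $V(f)$ is connected, so $I(f)$ is connected by~(a); moreover $\C\setminus I(f)\subseteq\C\setminus V(f)$, so the complement of $I(f)$ contains no unbounded closed connected set, and hence $I(f)$ is a weak \sw. If instead $V(f)$ is a \sw, with associated bounded \sconn\ domains $(G_n)$, then $I(f)$ is connected by~(a) and $\partial G_n\subseteq V(f)\subseteq I(f)$ for each~$n$, so the same sequence $(G_n)$ exhibits $I(f)$ as a \sw. For~(c): $\kfc$ is connected by Theorem~\ref{weaksw}, and if $\vfp$ is a \sw\ with domains $(G_n)$ then $\partial G_n\subseteq\vfp\subseteq\kfc$ and $\bigcup_n G_n=\C$, so $(G_n)$ exhibits $\kfc$ as a \sw. (The analogue of~(a) for $\vfp$ and $\kfc$ is vacuous, since $\kfc$ is automatically connected; this is why~(c) concerns only spiders' webs.)

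For part~(a), suppose $V(f)$ is connected, so that $V(f)$ lies in a single component $C_0$ of $I(f)$; I claim $C_0=I(f)$. By Theorem~\ref{Vprops} we have $J(f)\subseteq\partial V(f)\subseteq\overline{V(f)}\subseteq\overline{C_0}$, and since components of $I(f)$ are relatively closed in $I(f)$ this gives $I(f)\cap J(f)\subseteq I(f)\cap\overline{C_0}=C_0$. Consequently any component of $I(f)$ other than $C_0$ is disjoint from $J(f)$, hence open, hence a union of escaping Fatou components, and it suffices to prove that every escaping Fatou component $U$ satisfies $U\subseteq C_0$. If $U$ is \mconn, this is immediate, since then $U\subseteq A(f)\subseteq V(f)\subseteq C_0$. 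If $U$ is \sconn, it is enough to find one point $p\in\partial U\cap I(f)$: since $p\in I(f)\cap J(f)\subseteq C_0$ and $U\cup\{p\}$ is a connected subset of $I(f)$, we would deduce $U\subseteq C_0$.

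The main obstacle is therefore to show, using \eqref{minmodprop}, that no \sconn\ escaping Fatou component $U$ can have $\partial U\cap I(f)=\emptyset$. The easy half is that such a component cannot disconnect the \emph{closure} of the escaping set: writing $\overline{I(f)}=I(f)\cup J(f)=\overline{V(f)}\cup\bigcup_U U$, where $U$ runs over the escaping Fatou components, each $\overline U$ meets the connected, unbounded set $\overline{V(f)}$ in $\partial U$, so $\overline{I(f)}$ is connected; the difficulty is to pass from $\overline{I(f)}$ to $I(f)$, that is, to locate an \emph{escaping} point on $\partial U$. For unbounded $U$ the extra input is Theorem~\ref{fatou}(a), which forces $U\cap\vfp\neq\emptyset$ and identifies $U$ as a Baker domain (or a preimage of one) or a \wand. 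For the remaining, bounded, case I would fix $p\in\partial U\subseteq J(f)=\overline{V(f)\cap J(f)}$ (Theorem~\ref{Vprops}) and exploit the blowing-up property of the Julia set at~$p$: a small disc $D$ about~$p$ meets $U$, so its forward images attain arbitrarily large modulus and eventually cover the whole plane, and combining this with the rate of escape forced by \eqref{minmodprop} and with the long-continua mechanism underlying Theorem~\ref{weaksw} should force $D$, and hence $U$, to meet $C_0$. Making this last step precise — in effect a strengthening, in the presence of \eqref{minmodprop}, of Eremenko's assertion that $\overline{I(f)}$ has no bounded components — is where the real work lies.
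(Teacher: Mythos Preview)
Your arguments for parts~(b) and~(c) are correct and coincide with the paper's: once $I(f)$ (respectively $\kfc$) is known to be connected, any (weak) spider's web contained in it makes it a (weak) spider's web, and the paper also appeals to part~(a) and to \cite[Theorem~1.2]{ORS} for the connectedness of $\kfc$.

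For part~(a), however, there is a genuine gap, which you yourself flag. You correctly reduce to showing that every simply connected escaping Fatou component $U$ meets $C_0$, and propose to do this by finding a point of $\partial U\cap I(f)$; but you do not carry this out. The tools you list do not close the gap: Theorem~\ref{fatou}(a) gives $U\cap\vfp\neq\emptyset$ for unbounded $U$, which says nothing about $\partial U$ (and $\vfp\not\subset I(f)$ in general); and for bounded $U$ the blowing-up property produces large images of small discs near $\partial U$, but not an \emph{escaping} boundary point. The long-continua machinery of Section~\ref{WeakSWRes} yields points of $\vfp\subset\kfc$, not of $I(f)$, so it does not help here either.

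The paper bypasses all of this by quoting Lemma~\ref{EFlemma} (which is \cite[Theorem~1.2]{RS14}): for any set $E$ with $E\subset I(f)$ and $J(f)\subset\overline E$, if $E$ is connected then $I(f)$ is connected. Taking $E=V(f)$ and using $J(f)\subset\overline{V(f)}$ from Theorem~\ref{Vprops} gives part~(a) immediately. Note that this cited lemma does not use~\eqref{minmodprop} at all; the ``real work'' you identify has already been done in \cite{RS14}, and the intended proof here is simply to invoke it.
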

Note that if~\eqref{minmodprop} holds, then both $V^+(f)$ and $I^+(f)$ are weak spiders' webs; see Theorem~\ref{weaksw}.

To prove part~(a) of Theorem~\ref{VandI} we use the following result \cite[Theorem~1.2]{RS14}.
\begin{lemma}\label{EFlemma}
Let~$f$ be a {\tef} and let~$E$ be a set such that $E\subset I(f)$ and $J(f)\subset\overline{E}$. Either $I(f)$ is connected or it has infinitely many components that meet~$E$; in particular, if $E$ is connected, then $I(f)$ is connected.
\end{lemma}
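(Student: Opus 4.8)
The plan is to prove the contrapositive: if only finitely many components of $I(f)$ meet $E$, then $I(f)$ is connected (the final assertion is then immediate, since a connected $E$ can meet only one component). Denote by $G_1,\dots,G_m$ the components of $I(f)$ that meet $E$. Since $E=\bigcup_{j=1}^m(E\cap G_j)$ is a finite union, $\overline E\subset\bigcup_{j=1}^m\overline{G_j}=:Y$, and therefore $J(f)\subset\overline E\subset Y$, where $Y$ is closed and is the union of at most $m$ closed connected pieces. Assuming for a contradiction that $I(f)$ is disconnected, I would fix a component $G'$ of $I(f)$ distinct from all the $G_j$ (so $G'\cap E=\emptyset$) and a point $\zeta\in G'$, and then use the dynamics to link $G'$ to $\bigcup_j G_j$ inside $I(f)$, so forcing infinitely many components of $I(f)$ to meet $E$ unless $I(f)$ is connected.

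Two ingredients would be used repeatedly. The first is that for any Fatou component $W$ either $W\subset I(f)$ or $W\cap I(f)=\emptyset$: for simply connected $W$ this follows from the distortion estimate in Lemma~\ref{dist}, and a multiply connected Fatou component lies entirely in $A(f)\subset I(f)$. Consequently every component $G$ of $I(f)$ satisfies $\partial G\subset J(f)$ and $\overline G\cap F(f)\subset G$, so the part of $I(f)$ lying in the open set $F(f)\setminus Y$ is a union of whole Fatou components, none of which can belong to $G_1,\dots,G_m$. The second ingredient is the blowing-up property of $J(f)$: for any open $U$ meeting $J(f)$ and any $R>0$ there is $N$ with $f^N(U)\supset\{z:|z|<R\}$ apart from at most one exceptional point; since $J(f)$ is unbounded and contained in $Y$, a sufficiently large disk $f^N(U)$ meets $\overline{G_1}$ and $\overline{G'}$ simultaneously.

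The argument would then proceed as follows: using Eremenko's theorem \cite{E} that $\overline{I(f)}$ has no bounded components, separate $G'$ from $\overline{G_1}$ by a Jordan curve $\gamma$ whose trace lies in $F(f)\setminus I(f)$; take $U$ a small disk about a point of $J(f)\cap\gamma$, blow it up so that $f^N(U)$ is a connected set straddling $\gamma$ and meeting both $G_1$ and $G'$, and then build a nested sequence of preimage continua under $f^N$ and apply Lemma~\ref{RSlemm} to realise an itinerary joining $\zeta$'s component to $G_1$ within $I(f)$; this contradicts their being distinct components, and running the construction again on the reconfigured picture yields infinitely many components meeting $E$. \textbf{The main obstacle} is exactly this topological step. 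One must produce the separating curve with trace in $F(f)\setminus I(f)$ even though the components of $I(f)$ — which may include bounded escaping wandering domains buried in $\partial G_1\subset J(f)$ — can be arbitrarily intricately nested; and, since $\overline{I(f)}\cup\{\infty\}$ can be connected while $I(f)$ is disconnected, one must ensure that the blowing-up genuinely transports the density $J(f)\subset\overline E$ into a \emph{new} component of $I(f)$ rather than merely re-finding the $G_j$. The degenerate subcase in which $I(f)\subset\overline{G_1}$, with the extra components of $I(f)$ all lying inside $\partial G_1$, would need separate and delicate handling.
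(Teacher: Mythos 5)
The first thing to note is that the paper does not prove Lemma~\ref{EFlemma} at all: it is quoted verbatim from \cite[Theorem~1.2]{RS14}, so there is no in-paper argument to measure yours against. Judged on its own terms, your outline has genuine gaps, and they sit precisely at the steps you yourself flag as the main obstacle. The pivotal move --- separating $G'$ from $\overline{G_1}$ by a Jordan curve whose trace lies in $F(f)\setminus I(f)$ --- is unjustified and in general impossible: $F(f)$ may be empty, and even when it is not, a closed connected set separating two components of $I(f)$ may be forced to pass through $J(f)\setminus I(f)$. The correct separation tool here is Lemma~\ref{rempe}, which only yields a closed connected set in the complement of $I(f)$ with no control on its position relative to $F(f)$; Eremenko's theorem that $\overline{I(f)}$ has no bounded components gives you nothing of the sort. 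Moreover, even granting the blow-up step, Lemma~\ref{RSlemm} manufactures a single point with a prescribed itinerary; it does not produce a \emph{connected} subset of $I(f)$ meeting both $G_1$ and $G'$, which is what a contradiction with their being distinct components would require. Realising a connected escaping set joining two prescribed regions is essentially the whole difficulty of the theorem, not a routine consequence of covering.

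There are two further structural problems. Your contradiction hypothesis only treats the case where some component of $I(f)$ misses $E$; if $I(f)$ is disconnected with finitely many components all of which meet $E$, then your $G'$ does not exist and the argument is silent, yet this case must also be excluded. And the final claim that rerunning the construction yields \emph{infinitely} many components meeting $E$ is asserted rather than argued --- nothing in the sketch produces even one new component meeting $E$. For comparison, the argument in \cite{RS14} (the same paper that shows $I(f)$ is either connected or has infinitely many unbounded components) is driven by the structure theory of the fast escaping set, in particular the fact that $A_R(f)$ is closed with all components unbounded, so that $A(f)$, which is dense in $J(f)$, is organised into unbounded components of $I(f)$; your sketch mentions $A(f)$ only in passing and never exploits this, which is why the endgame cannot be completed along the lines you describe.
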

\begin{proof}[Proof of Theorem~\ref{VandI}]  Part~(a) follows from Lemma~\ref{EFlemma} by taking $E=V(f)$, since $J(f)\subset \overline{V(f)}$ by Theorem~\ref{Vprops}.

To prove parts~(b) and~(c) we note that, by the definitions, if a connected set contains a (weak) spider's web, then it is a (weak) spider's web. The fact that $I(f)$ is connected follows from part~(a), and the proof that $\kfc$ is connected was given in \cite[Theorem 1.2]{ORS}.
\end{proof}

\section{Functions for which $ V(f)=A(f) $}
\label{VequalsA}
\setcounter{equation}{0}

In this section we consider functions satisfying \eqref{minmodprop} for which $V(f)=A(f)$. We show that there are many classes of functions with this property. The main  focus  of the section is the proof of Theorem~\ref{erembak}.

Part~(b) of Theorem \ref{erembak} says that, for a \tef\ $ f $ satisfying~(\ref{minmodprop}), if $ V(f) = A(f) $ then both $ V(f) $ and $ I(f) $ are spiders' webs, and~$ F(f) $ has no unbounded components.  This proves that Eremenko's conjecture holds for such functions, and that Baker's conjecture holds for such functions of order less than $ 1/2 $, minimal type.

Part~(a) of the theorem gives a useful equivalent condition that we use in the proof of part~(b), namely that $ V(f) = A(f) $ if and only if
\begin{equation}
\begin{aligned}
\label{C3}
& \textrm{ there exist }  r \geq R > 0 \textrm{ such that } m^n(r) \geq M^n(R), \textrm{ for } n \in \N, \\ & \quad \quad \textrm{and} \,\,\, M^n(R) \to \infty \textrm{ as } n \to \infty.
\end{aligned}
\end{equation}

We need  three  further results for the proof.  The first is part of \cite[Theorem~1.4]{RS15}.

\begin{lemma}
\label{annit}
Let $ f $ be a \tef.  Then there exists $ R_0>0 $ with the property that, whenever $ (a_n) $ is a positive sequence such that
\[ a_n \geq R_0 \qand a_{n+1} \leq M(a_n), \qfor n \in \N_0, \]
there exists $ z \in \C $ and a sequence $ (n_j) $ with $ n_j \to \infty $ as $ j \to \infty $ such that
\[ |f^n(z)| \geq a_n, \qfor n \in \N_0 , \]
and
\[ |f^{n_j}(z)| \leq M^2(a_{n_j}), \qfor j \in \N. \]
\end{lemma}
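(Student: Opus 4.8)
Although Lemma~\ref{annit} is quoted from \cite{RS15}, here is how one would prove it directly. The plan is to realise the orbit of the required point~$z$ as the limit of a chain of nested compact annuli, one for each time step, and then to extract~$z$ using the topological covering lemma, Lemma~\ref{RSlemm}. The technical input is the \emph{annular covering estimate} underlying \cite[Theorem~1.4]{RS15}: there exists $R_0>0$ such that, writing $\rho_0=R_0$, $\rho_{k+1}=M(\rho_k)$ and taking fundamental annuli $\mathcal{A}_k=\{z:\rho_k\le|z|\le\rho_{k+1}\}$, one has $f(\mathcal{A}_k)\supset\mathcal{A}_{k+1}$ for every $k\in\N_0$ (so one can always step up one fundamental annulus), and, in addition, $f(\mathcal{A}_k)\supset\mathcal{A}_0\cup\cdots\cup\mathcal{A}_{k+1}$ whenever $\mathcal{A}_k$ contains a circle on which $|f|$ assumes a value less than $R_0$ (so from such an annulus one can also step arbitrarily far down).

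First I would record, via Iversen's theorem exactly as in the discussion following Theorem~\ref{realslow}, that $\liminf_{r\to\infty}m(r)=0$; hence the set of radii $r$ for which $\{z:|z|=r\}$ carries a point with $|f|<R_0$ is unbounded, so along any sequence of fundamental annuli $\mathcal{A}_k$ with $k\to\infty$ the favourable ``downward'' annuli are met infinitely often.

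Given a sequence $(a_n)$ with $a_n\ge R_0$ and $a_{n+1}\le M(a_n)$, each $a_n$ lies in a unique $\mathcal{A}_{s_n}$, and the inequality $a_{n+1}\le M(a_n)$ forces $s_{n+1}\le s_n+1$. I would then construct compact annuli $E_0,E_1,E_2,\dots$ with $f(E_n)\supset E_{n+1}$ for all $n\in\N_0$ and $E_n\subset\{z:|z|\ge a_n\}$ for every~$n$, in which a distinguished subsequence of ``anchor'' annuli $E_{n_j}$ (with $n_j\to\infty$) additionally satisfies $E_{n_j}\subset\{z:|z|\le M^2(a_{n_j})\}$ --- these anchor annuli being chosen to lie within two consecutive fundamental annuli whose inner one contains $a_{n_j}$. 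Between consecutive anchor steps one passes through intermediate annuli, stepping up by at most one fundamental annulus at a time (which preserves the lower bound $|f^n(z)|\ge a_n$, using $a_{n+1}\le M(a_n)$) until an annulus on which the downward covering is available is reached; from there one drops back onto the next anchor annulus. By the Iversen observation such a favourable annulus is reached after finitely many steps on each occasion, so the set of anchor steps is infinite. Finally, applying Lemma~\ref{RSlemm} to $(E_n)$ with all iterate-counts equal to~$1$ produces $z\in E_0$ with $f^n(z)\in E_n$ for every $n\in\N_0$, which gives $|f^n(z)|\ge a_n$ for all $n\in\N_0$ together with $|f^{n_j}(z)|\le M^2(a_{n_j})$ for $j\in\N$.

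\textbf{Main obstacle.} The heart of the matter is the annular covering estimate, and in particular its ``downward'' half for annuli on which the minimum modulus dips below $R_0$. Beyond that the difficulty is purely combinatorial: one must organise the upward excursions and the points of descent so that the lower bound $|f^n(z)|\ge a_n$ survives throughout --- this is exactly where $a_{n+1}\le M(a_n)$ is used --- while simultaneously guaranteeing, through $\liminf_{r\to\infty}m(r)=0$, that the orbit returns to an $M^2(a_n)$-controlled annulus infinitely often, so that the subsequence $(n_j)$ exists.
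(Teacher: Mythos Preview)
The paper does not prove Lemma~\ref{annit} at all; it is simply quoted as ``part of \cite[Theorem~1.4]{RS15}'' and used as a black box in the proof of Theorem~\ref{erembak}. So there is no proof in the paper to compare against.

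Your sketch is essentially the argument given in \cite{RS15}: build an annular itinerary $(E_n)$ for the orbit using a covering lemma for fundamental annuli $\mathcal{A}_k=\overline{A}(M^k(R_0),M^{k+1}(R_0))$, use Iversen's theorem to guarantee infinitely many ``downward'' opportunities, and extract~$z$ via Lemma~\ref{RSlemm}. That overall architecture is correct and is exactly how the result is obtained in \cite{RS15}.

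Two small technical points are worth tightening. First, the blanket claim $f(\mathcal{A}_k)\supset\mathcal{A}_{k+1}$ is a little stronger than what \cite{RS15} actually proves; their covering lemma works with closed annuli and gives covering of a slightly adjusted target (or uses annuli of the form $\overline{A}(r,r^c)$ rather than $\overline{A}(r,M(r))$), so you should either quote their precise statement or be explicit that this is where the real work lies. Second, at the anchor steps you want the lower bound $|f^{n_j}(z)|\ge a_{n_j}$ to survive: if $a_{n_j}\in\mathcal{A}_{s_{n_j}}$ and you place $E_{n_j}$ in $\mathcal{A}_{s_{n_j}}\cup\mathcal{A}_{s_{n_j}+1}$, then the inner radius of $E_{n_j}$ is $\rho_{s_{n_j}}\le a_{n_j}$, which is not quite enough. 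Taking $E_{n_j}=\mathcal{A}_{s_{n_j}+1}$ instead fixes this, since then $\rho_{s_{n_j}+1}\ge a_{n_j}$ and $\rho_{s_{n_j}+2}=M^2(\rho_{s_{n_j}})\le M^2(a_{n_j})$. With these adjustments your outline goes through.
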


 Next,  we need the following result from plane topology, which will also be used several times later in the paper.

\begin{lemma}\cite[page~84]{New}\label{Newman}
If $E_0$ is a continuum in $\hat{\C}$, $E_1$ is a closed subset of $E_0$ and $C$ is a component of $E_0\setminus E_1$, then $\overline{C}$ meets $E_1$.
\end{lemma}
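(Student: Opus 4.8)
The plan is to recognise the statement as the classical \emph{boundary bumping lemma} of continuum theory, so that in the paper it suffices to quote \cite[page~84]{New}; for completeness I indicate the argument one would give. We may assume $E_1\neq\emptyset$, the statement being vacuous otherwise, and argue by contradiction, supposing $\overline{C}\cap E_1=\emptyset$. The first step is to observe that $\overline{C}$ is then a connected subset of the set $E_0\setminus E_1$ (which is relatively open in $E_0$) containing the component $C$, so by maximality $\overline{C}=C$; in particular $C$ is closed in $E_0$, hence a compact subset of $\hat{\C}$, and since $C$ and $E_1$ are disjoint compacta they lie at some positive distance $\eta$ apart. Fixing $\eps\in(0,\eta)$ and putting $N=\{x\in E_0:\mathrm{dist}(x,C)\le\eps\}$ gives a compact neighbourhood of $C$ in $E_0$ with $N\cap E_1=\emptyset$, and one checks that $C$ is still a component of $N$, since any connected subset of $N$ properly containing $C$ would be a connected subset of $E_0\setminus E_1$ properly containing the component $C$.

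The decisive step is then to separate $C$ off by a set that is both open and closed in $E_0$. Since $N$ is compact and Hausdorff, its components coincide with its quasicomponents, so, fixing $p\in C$, the component $C$ of $p$ in $N$ is the intersection of all subsets of $N$ that are relatively clopen and contain $p$. The topological boundary of $N$ in $E_0$ is compact and disjoint from $C$, so a finite-intersection (compactness) argument produces a single set $V$ that is clopen in $N$, contains $C$, and lies inside the $E_0$-interior of $N$. Then $V$ is open in an open subset of $E_0$ and closed in a closed subset of $E_0$, hence clopen in $E_0$; since $E_0$ is connected and $V\neq\emptyset$, this forces $V=E_0$, whence $E_1\subset E_0=V\subset N$, contradicting $N\cap E_1=\emptyset$.

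I expect the only genuinely non-formal ingredient to be the passage from ``$C$ is a component'' to ``$C$ is cut out by a relatively clopen set'', via the coincidence of components and quasicomponents in a compact Hausdorff space together with the compactness argument; the remaining steps are routine manipulation of relative topologies. For this reason the cleanest course in the paper is simply to cite \cite[page~84]{New}, as is done.
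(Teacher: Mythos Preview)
Your proposal is correct. The paper gives no proof of this lemma at all---it simply cites Newman's book and moves on---so by supplying the standard boundary-bumping argument you are doing strictly more than the paper does. The argument you sketch (reduce to $C$ closed, build a compact neighbourhood $N$ missing $E_1$, use that components and quasicomponents coincide in compact Hausdorff spaces to cut $C$ out by a relatively clopen set, and contradict the connectedness of $E_0$) is the classical one and goes through as you describe.

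One small slip: the case $E_1=\emptyset$ is not vacuous but actually a counterexample, since then $C=E_0$ and $\overline{C}\cap E_1=\emptyset$. You should instead say that $E_1\ne\emptyset$ is an implicit hypothesis, which is harmless here because every application of the lemma in the paper has $E_1$ manifestly nonempty.
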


 Finally, we need the following sufficient condition for the set $ A_R(f) $ to be a \sw. Recall that
\[ A_R(f) = \{ z: |f^n(z)| \geq M^n(R), \textrm{ for } n \in \N\}, \]
where $R>0$ is such that $M^n(R) \to \infty$ as $n \to \infty$.

\begin{lemma}\cite[Corollary 8.2]{RS10a}\label{swsuff}
Let $ f $ be a \tef\ and let $R>0$ be such that $M^n(R) \to \infty$ as $n \to \infty$.  Then $ A_R(f) $ is a \sw\ if there exists a sequence $ (s_n)_{n \in \N_0} $ such that
\[ s_n \geq M^n(R) \qand m(s_n) \geq s_{n+1}, \qfor n \in \N_0. \]
\end{lemma}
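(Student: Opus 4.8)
The plan is to show directly that $A_R(f)$ is a \sw\ by building from the sequence $(s_n)$ an exhausting nest of bounded \sconn\ domains whose boundaries lie in $A_R(f)$, and then verifying that $A_R(f)$ is connected. Throughout I would use two elementary facts: the bound $|f^k(z)|\le M^k(|z|)$, valid for all $z\in\C$ and $k\in\N_0$ (immediate from $\max_{|z|\le r}|f|=M(r)$ and monotonicity of $M$), and the strict monotonicity of $M$ (since $f$ is non-constant). Recall that $M^n(R)\to\infty$, and hence $M^j(R)\to\infty$, so I may fix a strictly increasing sequence of non-negative integers $\phi(0)<\phi(1)<\cdots$ with $M^{\phi(n+1)}(R)>s_{\phi(n)}$ for every $n$.

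Next I would set
\[ S_n:=\{z\in\C:|f^k(z)|<s_{\phi(n)+k}\ \text{for all }k\in\N_0\} \]
and let $G_n$ be the component of $S_n$ containing the disk $D_n:=\{|z|<M^{\phi(n)}(R)\}$; this disk lies in $S_n$ since there $|f^k(z)|\le M^k(|z|)<M^{\phi(n)+k}(R)\le s_{\phi(n)+k}$. From $S_n\subseteq\{|z|<s_{\phi(n)}\}$ each $G_n$ is bounded; from $D_n\subseteq G_n$ and $\bigcup_nD_n=\C$ we get $\bigcup_nG_n=\C$; and from $\overline{G_n}\subseteq\{|z|\le s_{\phi(n)}\}\subseteq D_{n+1}\subseteq G_{n+1}$ we get the nesting $\overline{G_n}\subseteq G_{n+1}$.

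The central step is $\partial G_n\subseteq A_R(f)$. Given $z\in\partial G_n$, since a component is relatively closed we have $z\notin S_n$, while $z\in\overline{S_n}$ gives $|f^k(z)|\le s_{\phi(n)+k}$ for all $k$; hence there is a least $k_0$ with $|f^{k_0}(z)|=s_{\phi(n)+k_0}$. Propagating \emph{forward}: if $|f^k(z)|=s_{\phi(n)+k}$ then $|f^{k+1}(z)|\ge m(s_{\phi(n)+k})\ge s_{\phi(n)+k+1}$, which together with $|f^{k+1}(z)|\le s_{\phi(n)+k+1}$ forces equality again, so $|f^k(z)|=s_{\phi(n)+k}\ge M^{\phi(n)+k}(R)\ge M^k(R)$ for all $k\ge k_0$. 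Propagating \emph{backward}: a short downward induction from $|f^{k_0}(z)|\ge M^{\phi(n)+k_0}(R)$, using $|f^j(z)|\le M(|f^{j-1}(z)|)$ and the strict monotonicity of $M$, yields $|f^k(z)|\ge M^{\phi(n)+k}(R)\ge M^k(R)$ for $0\le k\le k_0$. Hence $z\in A_R(f)$. Finally I would replace each $G_n$ by its topological hull $\widehat G_n$ (the complement in $\C$ of the unbounded component of $\C\setminus G_n$), which is a bounded \sconn\ domain with $G_n\subseteq\widehat G_n$ and $\partial\widehat G_n\subseteq\partial G_n\subseteq A_R(f)$; the properties $\overline{\widehat G_n}\subseteq\widehat G_{n+1}$ and $\bigcup_n\widehat G_n=\C$ follow from those of $(G_n)$.

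The remaining task, which I expect to be the main obstacle, is to show $A_R(f)$ is \emph{connected}, as required for a \sw. Here I would use the now-standard topological argument of \cite{RS10a} (see also \cite{RS05}): each $\partial\widehat G_n$ is a continuum in $A_R(f)$, and a boundary-bumping argument inside $\overline{\widehat G_N}$ shows that every point of $A_R(f)$ lying in $\widehat G_N$ is joined within $A_R(f)$ to $\partial\widehat G_N$; letting $N\to\infty$ forces $A_R(f)$ to be connected. The extra ingredient needed is that, for each $k\in\N$ and $c>0$, the superlevel set $\{z:|f^k(z)|\ge c\}$ has no bounded component, which follows from the maximum principle for the subharmonic function $\log|f^k|$ together with the non-constancy of $f$.
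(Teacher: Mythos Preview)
The paper does not actually prove this lemma; it is quoted verbatim from \cite[Corollary~8.2]{RS10a} and used as a black box in the proof of Theorem~\ref{erembak}(b). So there is no ``paper's own proof'' to compare your attempt against.

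That said, your direct argument is essentially the one underlying \cite[Section~7--8]{RS10a}, and the first two steps are sound. Your reindexing via $\phi$ is needed (without it one only has $s_n\ge M^n(R)$, not $s_n< M^{n+1}(R)$, so the nesting $\overline{G_n}\subset G_{n+1}$ could fail). The forward/backward propagation showing $\partial G_n\subset A_R(f)$ is correct; the backward step uses that $M$ is strictly increasing, so $M(|f^{j-1}(z)|)\ge M(M^{\phi(n)+j-1}(R))$ implies $|f^{j-1}(z)|\ge M^{\phi(n)+j-1}(R)$. One tiny wording issue: you say ``a component is relatively closed''---what you mean is that $G_n$ is closed \emph{in} $S_n$, so $\overline{G_n}\cap S_n=G_n$ and hence $\partial G_n\cap S_n=\emptyset$; the conclusion is right.

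The part you flag as ``the main obstacle'' really is where the work lies, and your sketch is a little thin. The boundary-bumping lemma (Lemma~\ref{Newman} here) applies to continua, and $A_R(f)\cap\overline{\widehat G_N}$ is not a continuum, so you cannot apply it directly as stated. What one actually does in \cite{RS10a} is first show that each component of $A_R(f)$ is unbounded (this is where the ``no bounded component'' property of the superlevel sets $\{|f^k|\ge c\}$ enters, combined with a nested-intersection argument over the finite intersections $\bigcap_{k\le N}\{|f^k|\ge M^k(R)\}$), and then observe that any unbounded closed connected set meeting $\widehat G_N$ must cross $\partial\widehat G_N$, which is itself a continuum in $A_R(f)$. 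Once every component of $A_R(f)$ meets the connected ``ladder'' $\bigcup_n\partial\widehat G_n$, connectedness follows. Your ingredients are right; you should just make explicit that the maximum-principle fact is used to prove unboundedness of components of $A_R(f)$, not merely of single superlevel sets, and that this is what feeds into the final connectedness step.
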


We are now in a position to prove Theorem \ref{erembak}.    %We use the notation $ C(r) $ to denote the circle $ \{ z: |z| = r \}. $

\begin{proof}[Proof of Theorem \ref{erembak}]
We first prove part~(a).  Suppose that~$ f $ is a \tef\  satisfying condition (\ref{minmodprop}), and that (\ref{C3}) also holds.  If $ z \in V(f) $, then it  follows from Theorem~\ref{fast} and (\ref{C3}) that, for some $ \ell \in \N_0 $,
\[
|f^{n+\ell}(z)| \ge \mt^n(r) \geq m^n(r) \geq M^n(R), \textrm{ for } n \in \N,
\]
and so $ z \in A(f). $  Thus $ V(f) \subset A(f). $   As  it is always the case that $ A(f) \subset V(f), $ we have shown that (\ref{C3}) implies $ V(f) = A(f) $.

To establish the converse, we prove the contrapositive.  First note that, since (\ref{minmodprop}) holds, it follows from Theorem~\ref{fast} that there exist $ r \geq R > 0 $ such that
\begin{align}
\label{C5}
 m^n(r) \geq \mt^n(R) \textrm{ for } n \in \N, \qand \mt^n(R) \to \infty \textrm{ as } n \to \infty.
\end{align}
It then follows from Lemma~\ref{annit} that there exists $ z \in \C $ and a sequence $ (n_j) $ with $ n_j \to \infty $ as $ j \to \infty $ such that
\begin{align}
\label{C6}
 |f^n(z)| \geq m^{n}(r), \qfor n \in \N_0 ,
\end{align}
and
\begin{align}
\label{C7}
 |f^{n_j}(z)| \leq M^2(m^{n_j}(r)), \qfor j \in \N.
\end{align}
Suppose now that (\ref{C3}) does \emph{not} hold.  Then for every $ \ell \in \N_0 $, there exists $ N_{\ell} \in \N $ such that
\[ m^{N_{\ell} + \ell}(r) < M^{N_{\ell}}(r), \]
and therefore
\begin{align}
\label{C4}
m^{n + \ell}(r) < M^n(r), \qfor n \geq N_\ell.
\end{align}
Now it follows from (\ref{C5}) and (\ref{C6}) that $ z \in V(f). $  However, by (\ref{C4}), we deduce that, for each $ \ell \in \N $, there exists $ j = j(\ell) $ such that
\[
m^{n_j}(r)<M^{n_j-\ell}(r).
\]
Therefore, by (\ref{C7}),
\begin{align*}
|f^{n_j}(z)| & \leq M^2(m^{n_j}(r))\\
& \leq M^{n_j - \ell+2}(r),
\end{align*}
so $ z \notin A(f). $  Thus $ V(f) \neq A(f) $, and this completes the proof of part (a).

%Recall that
%\[ A_R(f) = \{ z: |f^n(z)| \geq M^n(R), \textrm{ for } n \in \N\}, \]
%where $R>0$ is such that $M^n(R) \to \infty$ as $n \to \infty$. Clearly $A_R(f)$ is a closed set.
%It was shown in \cite[Theorem 1.4]{RS10a} that $ A_R(f) $ is a \sw\ whenever $ A_R(f)^c $ has a bounded component.  Hence, if $ A_R(f) $ is \emph{not} a \sw\ and \eqref{C3} holds for $r\ge R>0$, then the component $ U $ of $ A_R(f)^c $ containing the origin is unbounded and so contains a continuum $ \gamma $ that meets both $ C(r) $ and $ C(R) $. Clearly,
%\[ f(C(r)) \subset \{z: |z| \geq m(r) \} \qand f(C(R)) \subset \{z: |z| \leq M(R) \}. \]
%Since $m(r) \ge M(R)$, it follows from Lemma \ref{Newman} that there is a continuum $ \gamma_1 $ in $ f(\gamma) $ that meets $ C(m(r)) $ and $  C(M(R)). $  Clearly we can repeat this process, obtaining a sequence of continua $ \gamma_n \subset f^n(\gamma) $ such that $ \gamma_n $ meets both $ C(m^n(r)) $ and $  C(M^n(R)), $ for $ n \in \N, $ with $m^n(r)\ge M^n(R)$.

%It follows that each of the sets
%\[ A_n := \{ z\in \gamma: |f^n(z)| \geq M^n(R) \}, \qfor n \in \N_0, \]
%is non-empty. Clearly, $ \bigcap_{n \in \N_0} A_n \subset A_R(f) $ and, since each $ A_n $ is compact and $ A_{n+1} \subset A_n, $ for $ n \in \N_0 $, we have $ \bigcap_{n \in \N_0} A_n \neq \emptyset. $  This is a contradiction, since
%\[ \bigcap_{n \in \N_0} A_n \subset \gamma \subset U \subset A_R(f)^c. \]
%Thus (\ref{C3}) implies that $ A_R(f) $ is a \sw.
 To prove part~(b), observe that condition (\ref{C3}) implies that the set $ A_R(f) $ is a \sw, by Lemma \ref{swsuff} with $ s_n = m^n(r). $  It now follows from \cite[Theorems 1.4 and 1.5]{RS10a} that $ A(f) $ and $ I(f) $ are also spiders' webs, and that $ f $ has no unbounded Fatou components. Since \eqref{C3} is equivalent to the condition $V(f)=A(f)$ by part~(a), this completes the proof.
\end{proof}

We conclude this section by discussing when the functions included in Theorem~\ref{funcs} have the property that $V(f) = A(f)$. First it is clear, by Theorem~\ref{erembak} part~(a) and \eqref{MCWDminmod}, that functions with a multiply connected Fatou component have this property. The following result shows that the other functions covered by Theorem~\ref{funcs} have the property that $V(f)=A(f)$ provided they also satisfy the weak regularity condition given in part~(b) below. (Recall that we proved in Section~\ref{Functions} that the functions in parts~(a)--(d) of Theorem~\ref{funcs} all satisfy the condition  in Lemma \ref{suffcon}, which is the same as the condition in part~(a) below.)

\begin{theorem}\label{VAfuncs}
Let $f$ be a {\tef} and let $R>0$ be such that $M(r) > r$ for $r\geq R$. Then $V(f) = A(f)$ if, for some $C>1$,
\begin{itemize}
\item[(a)]there exists $R_0 > 0$ such that, for $r \geq R_0$,
\begin{equation}\label{min}
 \mbox{ there exists } s \in (r,r^C) \mbox{ with } m(s) \geq M(r), \mbox{ and }
\end{equation}

\item[(b)] $f$ has regular growth in the sense that there exists a sequence $(r_n)_{n\in \N_0}$ with
    \begin{equation}\label{reg}
     r_n \ge M^n(R) \mbox{ and } M(r_n) \geq r_{n+1}^C, \mbox{ for } n \in \N_0.
    \end{equation}
\end{itemize}
 \end{theorem}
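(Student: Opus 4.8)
The plan is to reduce the statement to the equivalent condition~\eqref{C3} in Theorem~\ref{erembak} part~(a), and then to verify~\eqref{C3} by producing, with the help of hypotheses~\eqref{min} and~\eqref{reg}, a single point~$r$ along which the iterated minimum modulus dominates the iterated maximum modulus. First I would record the set-up: hypothesis~\eqref{min} is precisely the condition in Lemma~\ref{suffcon}, so~\eqref{minmodprop} holds and $V(f)$ is well defined; moreover the proof of Lemma~\ref{suffcon} yields~\eqref{mtbig}, that is $\mt(r^C)\ge M(r)$ for $r\ge R_0$, and Theorem~\ref{fullequiv} provides some $T_*>0$ with $\mt(t)>t$ for $t\ge T_*$.

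Next I would exploit the regular growth sequence $(r_n)$ from~\eqref{reg}. The point is that~\eqref{min} loses a power $1/C$ in the argument of $M$, while~\eqref{reg} supplies exactly that power back: putting $u_n:=r_n^C$ and choosing $n_0$ large enough that $r_n\ge\max(R_0,1)$ and $u_{n_0}\ge T_*$ for $n\ge n_0$, we obtain
\[
\mt(u_n)=\mt(r_n^C)\ge M(r_n)\ge r_{n+1}^C=u_{n+1},\qfor n\ge n_0.
\]
Since $\mt$ is non-decreasing, an immediate induction gives $\mt^n(u_{n_0})\ge u_{n_0+n}$, and since $u_{n_0+n}=r_{n_0+n}^C\ge r_{n_0+n}\ge M^{n_0+n}(R)\ge M^n(R)$ we conclude that
\[
\mt^n(u_{n_0})\ge M^n(R),\qfor n\in\N_0.
\]

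The final step is to transfer this growth statement for $\mt$ into one for $m$ itself, using the chain-of-intervals argument from the proof of Theorem~\ref{fullequiv}. Taking $T:=u_{n_0}\ge T_*$, Lemma~\ref{key} part~(b) applied to $\varphi=m$ gives $m\bigl([\mt^n(T),\mt^{n+1}(T)]\bigr)\supset[\mt^{n+1}(T),\mt^{n+2}(T)]$ for $n\in\N_0$, so Lemma~\ref{RSlemm} produces a point $r\in[T,\mt(T)]$ with $m^n(r)\ge\mt^n(T)\ge M^n(R)$ for all $n\in\N$; also $r\ge T\ge r_{n_0}\ge M^{n_0}(R)\ge R$. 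As $M^n(R)\to\infty$, this is exactly condition~\eqref{C3}, and Theorem~\ref{erembak} part~(a) then gives $V(f)=A(f)$.

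I expect the one genuinely substantive point to be the \emph{transfer} carried out in the last paragraph: the hypotheses only control $M$ and the auxiliary quantity $\mt$, and there is no monotonicity of $m$ available to iterate directly, so one has to pass through the topological Lemma~\ref{RSlemm} to extract an actual orbit of $m$ lying above $M^n(R)$. The combinatorial matching of the two occurrences of $C$ in~\eqref{min} and~\eqref{reg} is the other place requiring a little care, but once $u_n=r_n^C$ is chosen the remaining estimates are routine.
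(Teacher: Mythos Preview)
Your proof is correct and follows essentially the same route as the paper: build a sequence on which iterates of $\mt$ dominate $M^n(R)$, transfer this to an actual orbit of $m$, and invoke Theorem~\ref{erembak}(a). The only cosmetic differences are that the paper picks the points $s_n\in(r_n,r_n^C)$ directly from~\eqref{min} (rather than passing through $u_n=r_n^C$ and~\eqref{mtbig}) and performs the transfer by citing Theorem~\ref{fast} instead of reproving it via Lemmas~\ref{key} and~\ref{RSlemm}.
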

 \begin{proof}
 Let $R_0 > 0$ be as in part~(a). Then, by part~(b), there exists a sequence $(r_n)_{n\in \N_0}$ satisfying~\eqref{reg} with $r_n > R_0$, for $n \in \N_0$. So, by~\eqref{min}, for each $n \in\N_0$, there exists $s_n \in (r_n, r_n^C)$ with
 \[
  m(s_n) \geq M(r_n) \geq r_{n+1}^C > s_{n+1}.
 \]
 Therefore
 \[
 \mt^n(s_0) \geq s_n \geq r_n \geq M^n(R),\qfor  n \in \N_0,
 \]
 so $ \mt^n(s_0) \to \infty $ as $ n \to \infty. $  Thus, by Theorem~\ref{fast}, there exists $r' \geq s_0$ such that $m^n(r') \geq M^n(R)$, for $ n \in \N $. It now  follows from Theorem~\ref{erembak} part~(a) that $V(f) = A(f)$.
 \end{proof}

\section{The iterated minimum modulus and long continua}
\label{WeakSWRes}
\setcounter{equation}{0}

In \cite{ORS} we showed that, for a \tef\ $ f $, the set of points $ \kfc $ at which the iterates of $ f $ are unbounded is connected whenever (\ref{minmodprop}) holds.  The proof of \cite[Theorem~1.2]{ORS} also shows that the complement of $ \kfc $ contains no unbounded closed connected sets, and thus that $ \kfc $ is a weak \sw. This is one part of Theorem \ref{weaksw}.

In this section we prove the other part of Theorem \ref{weaksw}, which states that, if condition (\ref{minmodprop}) holds, then $ \vfp $ is also a weak \sw. In fact, we show that if \eqref{minmodprop} holds, then very many subsets of $\kfc$ are weak \sw s.

For a \tef\ $ f $ such that $m^n(r) \to \infty$ as $n\to\infty$, for some $r>0$, we put
\[ \ifm: = \{z: \textrm{there exists } (n_j) \textrm{ such that } |f^{n_j}(z)| \geq m^{n_j}(r), \textrm{ for } j \in \N \}, \]
where $ (n_j) $ is a strictly increasing sequence of positive integers, which in general depends on~$z$. We prove the following result.

\begin{theorem}
\label{allweaksw}
Let $ f $ be a \tef\ satisfying \eqref{minmodprop} and let $r>0$ be such that $m^n(r) \to \infty$ as $n\to\infty$.  Then each of the sets
\[
\ifm \quad \text{and} \quad \ifmh
\]
is a weak \sw, and hence $\vfp$ is a weak \sw s.
\end{theorem}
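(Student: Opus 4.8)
The plan is to show that the set $\ifm$ is a weak spider's web, since the other two claims follow easily: $\ifm \subset \ifmh \subset \vfp$, and any connected set containing a weak spider's web is itself a weak spider's web, so once $\ifm$ is established to be a connected set whose complement has no unbounded closed connected subsets, the inclusions upgrade the property to $\ifmh$ and $\vfp$ (connectedness of $\vfp$ having been recorded already, or following from the argument). So the real work is a statement about $\ifm$: it is connected, and no unbounded closed connected set can avoid it.

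The engine for both halves is Lemma~\ref{key} together with Lemma~\ref{RSlemm}, applied this time not to the real function $\varphi = m$ but to $f$ itself acting on annuli. First I would fix $R>0$ with $\mt(r)>r$ for $r\ge R$, set $\wt t_n = \mt^n(R)$, and recall from Lemma~\ref{key}(a) that $\wt t_n \nearrow \infty$. The key geometric observation is that if $|z| = s$ for some $s \in [\wt t_n, \wt t_{n+1}]$ then $|f(z)| \ge m(s) \ge \mt(\wt t_n) = \wt t_{n+1}$ whenever $m(s)$ is attained appropriately; more precisely, using the definition of $\mt$, for each $n$ there is some radius in $[\wt t_n, \wt t_{n+1}]$ on which the minimum modulus is at least $\wt t_{n+1}$, so the image of the corresponding round circle surrounds the disc of radius $\wt t_{n+1}$. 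This is the annular analogue of Lemma~\ref{key}(b): the family of closed annuli (or round circles) at these radii maps forward in a covering fashion. Running Lemma~\ref{RSlemm} on circles $C_n = \{|z| = s_n\}$, where $s_n$ is chosen in $[\wt t_n, \wt t_{n+1}]$ so that $m(s_n) \ge s_{n+1}$ (possible by Lemma~\ref{key} and the equivalent condition (e) of Theorem~\ref{fullequiv}), produces, for any starting circle, a point $z$ with $|f^n(z)| \ge s_n \ge \mt^n(R)$ for all $n$ — hence a point of $V_R(f) \subset \vfpr$, and in fact of $\ifm$-type sets after adjusting constants.

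For the "no unbounded closed connected set in the complement" part, suppose $\Gamma$ is an unbounded closed connected subset of $\C \setminus \ifm$. Since $\Gamma$ is unbounded and connected it meets every sufficiently large circle $\{|z| = s_n\}$; intersecting $\Gamma$ with a large closed disc and taking the unbounded component gives a continuum reaching from a bounded region out to $\infty$, and by Lemma~\ref{Newman} such a continuum meets each circle $C_n$ for $n$ large. One then wants to run the covering argument of Lemma~\ref{RSlemm} using pieces of $\Gamma$ as the sets $E_j$: if $f$ maps a subcontinuum of $\Gamma$ lying near radius $s_n$ over the circle $C_{n+1}$ (which it does, because the image of anything joining $\{|z|=s_n\}$ to a slightly larger circle has modulus bounded below by the relevant minimum modulus value $\ge s_{n+1}$, and winds around), then we can extract a point of $\Gamma$ whose orbit stays in these continua and has $|f^{n_j}(z)| \ge m^{n_j}(r)$ along a subsequence — forcing $z \in \ifm$, contradicting $\Gamma \cap \ifm = \emptyset$. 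The subtlety that only a subsequence is controlled is exactly why the definition of $\ifm$ (and of $\vfpr$) uses a subsequence $(n_j)$: it is enough that the image continua are large infinitely often, which is guaranteed since $\mt(s)\ge m(s_n)$ holds at the chosen radii for all $n$.

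The main obstacle I anticipate is the topological bookkeeping in the contradiction argument: showing that a piece of the putative unbounded continuum $\Gamma$, sitting in an annulus $\{s_n \le |z| \le s_{n+1}\}$ and joining its two boundary circles, has image under $f$ that genuinely "covers" the next circle $C_{n+1}$ in the sense required by Lemma~\ref{RSlemm} (i.e. the image is a continuum meeting both the inside and the outside of $C_{n+1}$, or separating $0$ from $\infty$ appropriately so that a suitable subcontinuum surrounds the relevant disc). This needs the minimum modulus on circle $C_n$ to control $|f|$ from below uniformly on that whole circle, plus an argument that the image also reaches small enough modulus or winds around — here one uses that $f$, restricted to the annulus, has image that must separate points, together with $m(s_n)$ being large. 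Handling the preimage layers $\bigcup_\ell f^{-\ell}$ in $\ifmh$ and $\vfp$, and confirming that these do not destroy the weak-spider's-web property, is routine: taking preimages of a weak spider's web under a continuous surjection of $\C$ preserves the property, but this should be checked carefully against the definition of a weak spider's web as a connected set whose complement omits unbounded closed connected sets.
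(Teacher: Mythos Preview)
Your covering argument has a genuine gap. You propose to take subcontinua $E_j$ of the hypothetical unbounded set $\Gamma\subset\kfm$, each crossing an annulus $\{s_n\le |z|\le s_{n+1}\}$, and to run Lemma~\ref{RSlemm} by showing $f(E_j)\supset E_{j+1}$. But the minimum modulus only gives a \emph{lower} bound: the point of $E_j$ on $\{|z|=s_n\}$ maps to modulus at least $s_{n+1}$, and the point on $\{|z|=s_{n+1}\}$ maps to modulus at least $s_{n+2}$. Nothing prevents $f(E_j)$ from lying entirely outside $\{|z|=s_{n+2}\}$, or even much farther out; there is no mechanism in your sketch that forces $f(E_j)$ to reach back \emph{inside}, which is exactly what is needed to produce the next crossing continuum. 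Your remark that ``the image also reaches small enough modulus or winds around'' has no justification.

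The missing idea is that the hypothesis $\Gamma\subset\kfm$ means every point of $\Gamma$ has its orbit \emph{eventually} in $D_n$ for all large $n$. The paper exploits this through Theorem~\ref{notlong} and its inductive Lemma~\ref{genctm}: at each stage one tracks a continuum $\Gamma_{n_j}\subset f^{n_j}(\kfm)\cap(\C\setminus D_{n_j})$ together with \emph{two} distinguished points --- one on $\partial D_{n_j}$ (forced outward by the minimum modulus) and one whose forward orbit stays in the discs (so the image continuum must eventually re-enter). This pair is what makes the induction close; your single-circle covering picture cannot substitute for it. You should also note that the connectedness of $\ifm$ is a separate argument (via Lemma~\ref{rempe} and a replay of the same induction), not something that falls out of the complement statement. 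Finally, the deduction for $\vfp$ does not use an inclusion $\ifmh\subset\vfp$; for general $r$ that inclusion goes the other way. The paper instead uses the identification~\eqref{Vpluseqn}, which says $\vfp=\ifmh$ for the particular $r$ satisfying \eqref{mtcond}.
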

The fact that $\vfp$ is a weak \sw\ follows, by property \eqref{Vpluseqn}, from the fact that $\ifmh$ is a weak \sw.

As noted above, the result that $ \kfc $ is a weak \sw\ whenever \eqref{minmodprop} holds was proved in \cite{ORS}.  The proofs given here for the sets $ \ifm $ and $\ifmh$ use a similar approach,  but they are significantly more complicated and they yield more information  about the structure of the set $I^+(f)$. Indeed, recall from Theorem~\ref{realslow} that if \eqref{minmodprop} holds, then there exist values of $r>0$ such that $m^n(r) \to \infty$ more slowly than any given rate. For such~$r$ the set $\ifmh$ is correspondingly larger than $\vfp$, which is therefore in this sense the smallest subset of $\kfc$ having this form.

We deduce Theorem~\ref{allweaksw} from a new fundamental result which states that if $m^n(r) \to \infty$ as $n\to\infty$, where $r>0$, then in a precise sense certain long continua must meet $I^+(f,(m^n(r)))$. Here, and in  what follows, we denote the complement of $ \ifm $ by
\[
\kfm = \{z: |f^{n}(z)| < m^{n}(r) \textrm{ for sufficiently large } n \}.
\]
\begin{theorem}
\label{notlong}
Let $ f $ be a \tef\, satisfying \eqref{minmodprop}, let $r>0$ be such that $m^n(r) \to \infty$ as $n\to\infty$, and put
\begin{equation}\label{Dn}
D_n = \{z \in \C: |z| < m^n(r) \}, \qfor n \in \N_0.
\end{equation}
Suppose that $ \alpha \in \kfm $ and let $ N_0=N_0(\alpha) \in \N $ be such that
\[ f^n(\alpha) \in D_n, \qfor n \geq N_0. \]
%If $K \subset \kfm$ is a continuum, and $\alpha \in K$, then
If $ K \subset \kfm $ is a continuum, and $ \alpha \in K, $ then
\[ f^n(K) \subset D_n, \qfor n \geq N_0. \]
Moreover, there exists $N_1=N_1(\alpha)$ such that $ K\subset D_{N_1} $.
\end{theorem}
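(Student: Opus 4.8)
My plan rests on the identity $m^{n+1}(r)=m(m^n(r))$ and the resulting \emph{propagation property}: if $L\subset\C$ is a continuum and some $w\in L$ has $|w|=m^n(r)$, then $|f(w)|\ge m(m^n(r))=m^{n+1}(r)$, so $f(L)\not\subset D_{n+1}$. I would prove both assertions by contradiction: if $f^{n_*}(K)\not\subset D_{n_*}$ for some $n_*\ge N_0$, or if $K$ fails to lie in a disk depending only on $\alpha$, then one can locate a point of $K$ whose orbit meets $\C\setminus D_n$ for infinitely many $n$, contradicting $K\subset\kfm$.

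\textbf{The assertion $f^n(K)\subset D_n$ for $n\ge N_0$.} Suppose instead $f^{n_*}(K)\not\subset D_{n_*}$ for some $n_*\ge N_0$. Since $f^n(K)$ is a continuum containing $f^n(\alpha)\in D_n$ for $n\ge N_0$, the propagation property gives by induction that $f^n(K)\not\subset D_n$, and hence (by connectedness) that $f^n(K)$ meets $\partial D_n$, for every $n\ge n_*$. Equivalently, the compact sets $B_n:=\{z\in K:|f^n(z)|\ge m^n(r)\}$ are non-empty for $n\ge n_*$, and, applying Lemma~\ref{Newman} with $E_0=K$ and $E_1=\{z\in K:|f^n(z)|\le m^n(r)\}$ (non-empty since $\alpha\in E_1$), every component of $B_n$ contains a point $z$ with $|f^n(z)|=m^n(r)$, which therefore also lies in $B_{n+1}$. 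I would then show, by induction on $k$, that the nested compact sets $\Omega_k:=B_{n_*}\cap\cdots\cap B_{n_*+k}$ are all non-empty: recursively select a component $P_k$ of $\Omega_k$ for which $f^{n_*+k}(P_k)$ meets $\partial D_{n_*+k}$; the crossing point then lies in $\Omega_{k+1}$, and a second application of Lemma~\ref{Newman}, this time inside the continuum $P_k$ and using the continuity of $|f^{n_*+k+1}|$ on it, re-establishes the boundary-contact property for the relevant component $P_{k+1}$ of $\Omega_{k+1}$. Any $\zeta\in\bigcap_k\Omega_k$ then satisfies $|f^n(\zeta)|\ge m^n(r)$ for all $n\ge n_*$, so $\zeta\in K\setminus\kfm$, the required contradiction.

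\textbf{The uniform bound $K\subset D_{N_1}$.} Since $m^n(r)\to\infty$, Theorem~\ref{fullequiv} gives $\mt(t)>t$ for large $t$, so $\mt(t)\to\infty$. Hence, for each $t>0$, every component of the closed set $f^{-1}(\{|w|\le t\})$ is bounded: an unbounded component would be an unbounded continuum meeting every sufficiently large circle, forcing $m(s)\le t$ for all large $s$, and hence $\mt$ bounded. By induction on $k$ (a component of $(f^{k+1})^{-1}(\{|w|\le t\})$ maps under $f$ into a single, necessarily bounded, component of $(f^k)^{-1}(\{|w|\le t\})$, say into $\{|w|\le t'\}$, and so lies in one bounded component of $f^{-1}(\{|w|\le t'\})$), every component of $(f^k)^{-1}(\{|w|\le t\})$ is bounded. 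By the first assertion, $f^{N_0}(K)\subset\overline{D_{N_0}}=\{|w|\le m^{N_0}(r)\}$, so $K$ lies in the component $\Omega_\alpha$ of $(f^{N_0})^{-1}(\overline{D_{N_0}})$ containing $\alpha$; this component is bounded and determined by $\alpha$ (via $N_0=N_0(\alpha)$), so choosing $N_1=N_1(\alpha)$ with $m^{N_1}(r)>\sup_{z\in\Omega_\alpha}|z|$ yields $K\subset D_{N_1}$.

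\textbf{Main obstacle.} The delicate point is the inductive construction of the components $P_k$: after intersecting with $B_{n_*+k+1}$, one must show the new component still reaches the moving circle $\partial D_{n_*+k+1}$, even though the forward image of the part of $P_k$ lying outside $D_{n_*+k}$ need not return towards the origin. Handling this carefully — balancing the propagation property against the anchoring of $f^n(K)$ near $\partial D_n$ provided by $\alpha$ — is the heart of the argument and the source of the extra complication over the case of $\kfc$ treated in \cite{ORS}.
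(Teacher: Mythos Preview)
Your argument for the uniform bound $K\subset D_{N_1}$ is sound and gives a nice alternative to the paper's more direct minimum--modulus iteration: once the first assertion is available, your observation that every component of $(f^{N_0})^{-1}(\overline{D_{N_0}})$ is bounded (because $\mt(t)\to\infty$) immediately pins down a set $\Omega_\alpha$ depending only on $\alpha$.

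The first assertion, however, has a genuine gap at exactly the point you flag as the ``main obstacle''. Your inductive hypothesis is that $f^{n_*+k}(P_k)$ meets $\partial D_{n_*+k}$; taking the crossing point $z$ you correctly get $z\in\Omega_{k+1}$. But the Newman step ``inside $P_k$'' that is supposed to produce a boundary--contact point at level $n_*+k+1$ needs a point $w\in P_k$ with $|f^{n_*+k+1}(w)|\le m^{n_*+k+1}(r)$, and there is no such point in general: it is perfectly possible that $|f^{n_*+k+1}(w)|>m^{n_*+k+1}(r)$ for \emph{every} $w\in P_k$. In that case $P_{k+1}=P_k$, $f^{n_*+k+1}(P_{k+1})\cap\partial D_{n_*+k+1}=\emptyset$, and your induction stalls. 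The anchor $\alpha$ does not help here, since $\alpha\notin B_{n_*}\supset P_k$. You identify the danger but do not supply a mechanism to overcome it.

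The paper circumvents this by working on the image side and carrying \emph{two} distinguished points through the induction rather than one. At stage $j$ one has a continuum $\Gamma_{n_j}\subset f^{n_j}(K)\cap(\C\setminus D_{n_j})$ together with a boundary point $z_{n_j}\in\Gamma_{n_j}\cap\partial D_{n_j}$ and an anchor $z'_{n_j}\in\Gamma_{n_j}$ whose entire forward orbit stays inside the shrinking discs. Since $z_{n_j}$ inherits membership in $\kfm$, there is a \emph{minimal} $N\ge 1$ with $f^n(z_{n_j})\in D_{n_j+n}$ for $n>N$; one sets $n_{j+1}=n_j+N$. Then $f^{n_{j+1}-n_j}(z_{n_j})$ lies strictly outside $\overline{D_{n_{j+1}}}$ (this is the new anchor $z'_{n_{j+1}}$), while $f^{n_{j+1}-n_j}(z'_{n_j})$ lies inside $D_{n_{j+1}}$; Newman on $f^{n_{j+1}-n_j}(\Gamma_{n_j})$ now genuinely yields a new boundary point $z_{n_{j+1}}$. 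The variable step $n_{j+1}-n_j$ and the swap of roles between boundary point and anchor are precisely what your single--point, unit--step scheme lacks. One then concludes via Lemma~\ref{RSlemm} applied to the sequence $(\Gamma_{n_j})$. If you wish to keep your preimage--component framework, you would need to incorporate an analogous variable jump and a second tracked point; as written the induction does not close.
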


The following lemma contains the induction step used in the proof of Theorem~\ref{notlong}. The idea of the proof of the lemma is similar to that of \cite[Lemma~3.2]{ORS}, though the details are different.

\begin{lemma}
\label{genctm}
Let $ f $ be a \tef\ such that (\ref{minmodprop}) holds and let $D_n$, $n\in \N_0$, be defined as in \eqref{Dn}. Suppose that,  for some $ j \in \N_0 $, there exists $n_j\in \N_0$ and a continuum $ \Gamma_{n_j} $ with the following properties:
\begin{enumerate}[(i)]
\item $ \Gamma_{n_j} \subset f^{n_j}(\kfm) \cap (\C\setminus D_{n_j})$;
\item there is a point $ z_{n_j} \in \Gamma_{n_j} \cap \partial D_{n_j}$;
\item there is a point $ z'_{n_j} \in \Gamma_{n_j} $ such that $ f^n(z'_{n_j}) \in D_{n_j+n} $, for $ n \in \N$.
\end{enumerate}
Then there exists $\, n_{j+1}>n_j $ and a continuum $ \Gamma_{n_{j+1}} \subset f^{n_{j+1} - n_j}(\Gamma_{n_j}) $ such that properties~(i),~(ii) and~(iii) hold with $n_j$ replaced by $n_{j+1}$ throughout.
\end{lemma}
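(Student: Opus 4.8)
The plan is to apply a single suitable power of~$f$ to $\Gamma_{n_j}$, where the exponent is the last time the orbit of the boundary point $z_{n_j}$ leaves its disc, and then to cut out the required sub-continuum using the plane-topology lemma (Lemma~\ref{Newman}). The only estimate I need is the minimum-modulus inequality in the form $f(\partial D_k)\subset \C\setminus D_{k+1}$, which holds because $|z|=m^k(r)$ forces $|f(z)|\ge m(m^k(r))=m^{k+1}(r)$; applied along an orbit it says that once an iterate sits on the circle $\partial D_k$, the next iterate has left $D_{k+1}$.

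First I would observe that, by~(i), $z_{n_j}\in f^{n_j}(\kfm)$, so $f^m(z_{n_j})\in D_{n_j+m}$ for all large~$m$; hence $S:=\{k\ge 0: f^k(z_{n_j})\notin D_{n_j+k}\}$ is finite, and by~(ii) it contains $k=1$ (since $|z_{n_j}|=m^{n_j}(r)$ gives $|f(z_{n_j})|\ge m^{n_j+1}(r)$). Put $q:=\max S\ge 1$ and $n_{j+1}:=n_j+q$. Maximality gives $f^m(z_{n_j})\in D_{n_j+m}$ for every $m>q$, and, crucially, $f^q(z_{n_j})$ lies \emph{strictly} outside $\overline{D_{n_{j+1}}}$: if $|f^q(z_{n_j})|=m^{n_j+q}(r)$ then the minimum-modulus estimate would give $|f^{q+1}(z_{n_j})|\ge m^{n_j+q+1}(r)$, contradicting $q=\max S$. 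Next, $f^q(\Gamma_{n_j})$ is a continuum contained in $f^{n_{j+1}}(\kfm)$ (by~(i)) which contains $f^q(z_{n_j})$, lying outside $\overline{D_{n_{j+1}}}$, and $f^q(z'_{n_j})$, lying inside $D_{n_{j+1}}$ by~(iii) and $q\ge 1$. Let $C$ be the component of $f^q(\Gamma_{n_j})\setminus\overline{D_{n_{j+1}}}$ containing $f^q(z_{n_j})$ and put $\Gamma_{n_{j+1}}:=\overline C$. Applying Lemma~\ref{Newman} with $E_0=f^q(\Gamma_{n_j})$ and $E_1=f^q(\Gamma_{n_j})\cap\overline{D_{n_{j+1}}}$ (closed and nonempty) shows $\overline C$ meets $\overline{D_{n_{j+1}}}$; since $\overline C\subset\C\setminus D_{n_{j+1}}$, it meets $\partial D_{n_{j+1}}$. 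Thus $\Gamma_{n_{j+1}}\subset f^{\,n_{j+1}-n_j}(\Gamma_{n_j})\cap(\C\setminus D_{n_{j+1}})$ satisfies~(i) and~(ii), and, taking $z'_{n_{j+1}}:=f^q(z_{n_j})\in\Gamma_{n_{j+1}}$, we get $f^n(z'_{n_{j+1}})=f^{q+n}(z_{n_j})\in D_{n_j+q+n}=D_{n_{j+1}+n}$ for all $n\ge 1$, which is~(iii).

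The point requiring most care — and the one that dictates the choice of~$q$ — is re-establishing~(iii): one must exhibit a point of the new continuum whose orbit is trapped in the shrinking discs $D_{n_{j+1}+n}$ for \emph{every} $n\ge 1$, not merely eventually. The images $f^n(z'_{n_j})$ of the old good point are useless, since they lie \emph{inside} $D_{n_j+n}$, whereas $z'_{n_{j+1}}$ must lie in $\Gamma_{n_{j+1}}\subset\C\setminus D_{n_{j+1}}$; running instead the orbit of the boundary point $z_{n_j}$ out to its last exit time~$q$ produces a point that is simultaneously outside the disc (forced by the minimum-modulus estimate) and has the correct tail orbit (forced by maximality of~$q$), and the sub-continuum joining it to the interior point $f^q(z'_{n_j})$ supplies the boundary contact via Lemma~\ref{Newman}. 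One should also note that $\Gamma_{n_j}$ is automatically non-degenerate — a single point cannot satisfy both~(ii) and~(iii), since $|z_{n_j}|=m^{n_j}(r)$ gives $|f(z_{n_j})|\ge m^{n_j+1}(r)$ — so that all continua produced are non-degenerate and the induction can proceed.
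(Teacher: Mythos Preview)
Your proof is correct and follows essentially the same approach as the paper's: both define $n_{j+1}-n_j$ as the last time the orbit of the boundary point $z_{n_j}$ lies outside its disc (your $q=\max S$ equals the paper's minimal $N$ with $f^n(z_{n_j})\in D_{n_j+n}$ for $n>N$), both use the minimum-modulus estimate to force $f^q(z_{n_j})$ strictly outside $\overline{D_{n_{j+1}}}$, and both cut out the new continuum via Lemma~\ref{Newman} and take $z'_{n_{j+1}}=f^q(z_{n_j})$. The only cosmetic differences are that the paper takes $\Gamma_{n_{j+1}}$ to be the component of the \emph{closed} set $f^q(\Gamma_{n_j})\cap(\C\setminus D_{n_{j+1}})$ containing $f^q(z_{n_j})$ (and applies Lemma~\ref{Newman} with $E_0=f^q(\Gamma_{n_j})\cup\overline{D}_{n_{j+1}}$, $E_1=\overline{D}_{n_{j+1}}$), whereas you take the closure of the component of the open set $f^q(\Gamma_{n_j})\setminus\overline{D_{n_{j+1}}}$; your continuum is a subset of the paper's, and either choice works.
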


\begin{proof}
Since $z_{n_j}\in \Gamma_{n_j}$, it follows from property~(i) that there exists a minimal integer $ N = N(z_{n_j}) \in \N_0 $ such that
\begin{equation}\label{kfc-cond1}
f^n(z_{n_j})\in D_{n_j+n},\quad\text{for } n > N.
\end{equation}
On the other hand, the properties of the minimum modulus function imply that
\begin{equation}
\label{surr}
f(\partial D_n) \subset \C \setminus D_{n+1}, \text{ for } n \in \N_0,
\end{equation}
so by property~(ii) we have
\begin{equation*}
f(z_{n_j})\in \C\setminus D_{n_j+1}.
\end{equation*}
Hence $N\ge 1$. Now define $n_{j+1}=n_j+N$. Then, by \eqref{kfc-cond1} and the minimality of~$ N $,
\begin{equation}\label{kfc-cond2}
f^n(z_{n_j})\in D_{n_j+n},\quad\text{for } n>n_{j+1}-n_j,
\end{equation}
and
\[
f^{n_{j+1}-n_j}(z_{n_j})\in \C\setminus D_{n_{j+1}}.
\]
Moreover, $f^{n_{j+1}-n_j}(z_{n_j})\notin \partial D_{n_{j+1}}$, by  (\ref{surr})  and \eqref{kfc-cond2}, so
\begin{equation}\label{cond2}
f^{n_{j+1}-n_j}(z_{n_j})\in \C\setminus \overline{D}_{n_{j+1}}.
\end{equation}
Also, by property~(iii),
\begin{equation}\label{cond3}
f^{n_{j+1}-n_j}(z'_{n_j})\in D_{n_{j+1}}.
\end{equation}
It follows from~\eqref{cond2} and~\eqref{cond3} that the continuum $f^{n_{j+1}-n_j}(\Gamma_{n_j})$ includes points from both $D_{n_{j+1}}$ and $\C\setminus \overline{D}_{n_{j+1}}$ (see Figure \ref{LemFig}).

%\begin{figure}[h]
%    \centering
%     \setlength\fboxsep{10pt}
%     \setlength\fboxrule{0.5pt}
%     \def\svgwidth{300pt}
%     \input{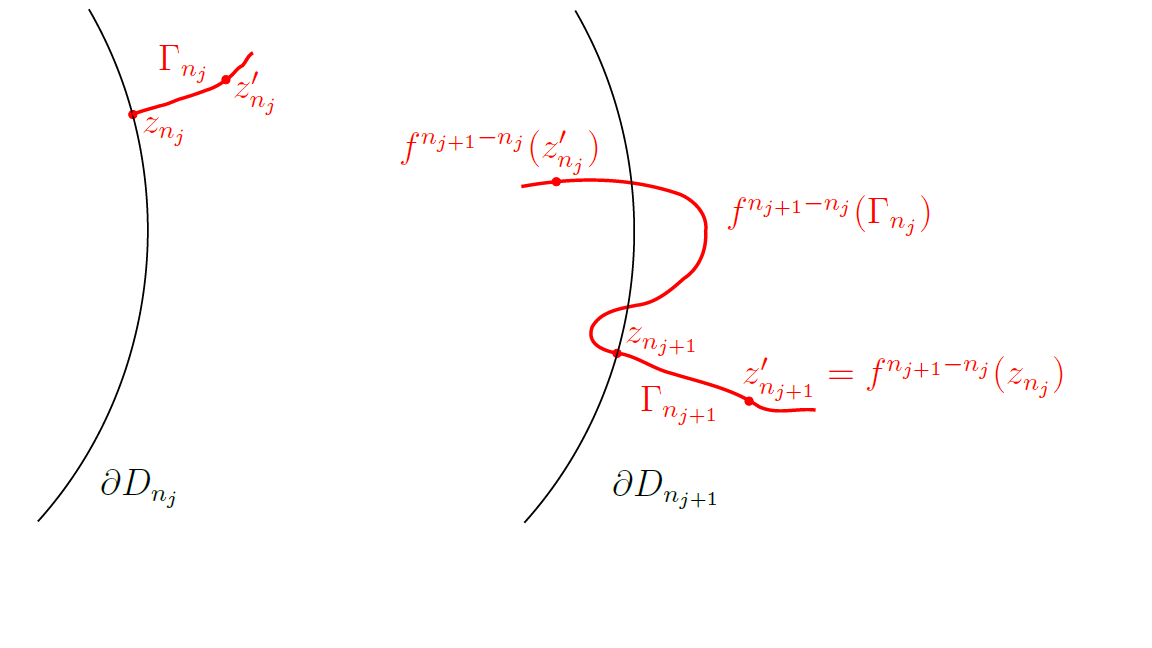}
%     \vspace{.2cm}
%     \caption{Proof of Lemma~\ref{genctm}. }
%      \label{LemFig}
%\end{figure}

\begin{figure}[htb]
\begin{center}
\includegraphics[width=13cm]{figure2.png}
\vspace{-1.5cm}
\caption{Proof of Lemma~\ref{genctm}}
\label{LemFig}
\end{center}
\end{figure}

Now let $\Gamma_{n_{j+1}}$ be the component of the closed set
\[
f^{n_{j+1}-n_j}(\Gamma_{n_j})\cap (\C\setminus D_{n_{j+1}})
\]
that contains the point
\begin{equation}\label{z'}
z'_{n_{j+1}}:=f^{n_{j+1}-n_j}(z_{n_j}).
\end{equation}
Then
\[ \Gamma_{n_{j+1}} \subset f^{n_{j+1}}(\kfm) \cap (\C\setminus D_{n_{j+1}}), \]
and we deduce that $\Gamma_{n_{j+1}}$ meets $\partial D_{n_{j+1}}$ by applying Lemma~\ref{Newman} with
\[
E_0=f^{n_{j+1}-n_j}(\Gamma_{n_j})\cup \overline{D}_{n_{j+1}} \quad \text{and} \quad E_1 = \overline{D}_{n_{j+1}}.
\]
Thus there exists $z_{n_{j+1}}\in \Gamma_{n_{j+1}}\cap\partial D_{n_{j+1}}$. Therefore, properties~(i) and~(ii) hold with $n_j$ replaced by $n_{j+1}$, and property~(iii) also holds, since
\[
f^n(z'_{n_{j+1}})=f^{n+n_{j+1}-n_j}(z_{n_j})\in D_{n_{j+1}+n},\quad\text{for } n\in\N,
\]
by \eqref{kfc-cond2} and \eqref{z'}.
\end{proof}

Next, we use Lemma \ref{genctm} to prove Theorem~\ref{notlong}.

\begin{proof}[Proof of Theorem~\ref{notlong}]
Let $ \alpha \in \kfm $ and let $N_0 \in \N$ be such that $f^n(\alpha) \in D_n$ for $n \geq N_0$. Also, let $K\subset \kfm$ be a  continuum such that $\alpha \in K$.

We assume for a contradiction that the conclusion of the theorem does not hold; that is, there exists $ N \geq N_0 $ such that $ f^N(K) \cap (\C \setminus D_N) \neq \emptyset. $ We show that this assumption implies that we can select a certain continuum~$ \Gamma $ to act as the starting point for the construction of a sequence $ (\Gamma_{n_j}) $ of continua with the properties stated in Lemma~\ref{genctm}.  This then enables us to obtain the required contradiction.

{\it Step 1: Selection of the continuum $\Gamma$}.  Let $\alpha':=f^N(\alpha)$ and $\Gamma:=f^N(K)$. Then $\Gamma$ is a continuum containing $\alpha'$, and $\alpha'\in D_N$, by hypothesis. Also, $ \Gamma \cap (\C \setminus D_N) \neq \emptyset $, by assumption. Hence, we have
\[
\Gamma \subset f^N(\kfm),\;\; \Gamma \cap \partial D_N\ne \emptyset,\;\; \alpha'\in \Gamma \;\;\text{and } f^n(\alpha')\in D_{N+n},\qfor n\in \N_0.
\]

{\it Step 2: Construction of a sequence of continua $\Gamma_{n_j}$}. We now relabel  $ D_n $ as $ D_{n - N} $ for $ n \geq N $, and put $ r' = m^N(r) $, giving
\[
\Gamma \subset \kfmd,\;\; \Gamma \cap \partial D_0\ne \emptyset,\;\; \alpha'\in \Gamma\;\;\text{and } f^n(\alpha')\in D_{n},\qfor n\in \N_0.
\]
%$ \alpha' \in \Gamma $ with $f^n(\alpha') \in D_n$, for $n \in \N_0$,  $\Gamma \cap \partial D_0 \ne \emptyset $ and $ \Gamma \subset \kfmd $,
Let $ z_0 \in \Gamma \cap \partial D_0 $.  Then, since $\Gamma \subset \kfmd$, there exists $ n_0 \in \N $ such that
\begin{equation}
\label{alp}
f^{n_0}(z_0) \in \C\setminus D_{n_0} \quad\text{and}\quad f^n(z_0) \in D_n,  \, \text{for } n > n_0.
\end{equation}
Note that $ f^{n_0}(\Gamma) $ meets $ \C\setminus \overline{D}_{n_0} $, since  if $ z_{n_0}' : = f^{n_0}(z_0) $ lay in $ \partial D_{n_0} $ we would have $ f^{n_0+1}(z_0) \in \C \setminus D_{n_0 + 1} $, contradicting (\ref{alp}). Since $ f^{n_0}(\alpha') \in D_{n_0} $,  it follows that $ f^{n_0}(\Gamma) $ meets $ \partial D_{n_0} $.

Now let $ \Gamma_{n_0} $ be the component of $ f^{n_0}(\Gamma) \setminus D_{n_0} $ that contains $ z_{n_0}' $.  Then
\begin{equation}\label{Gamman0}
\Gamma_{n_0}\subset f^{n_0}(\Gamma),
\end{equation}
and $ \Gamma_{n_0} $ meets $ \partial D_{n_0} $, by Lemma \ref{Newman}, applied with
\[
E_0=f^{n_{0}}(\Gamma)\cup \overline{D}_{n_{0}} \quad \text{and} \quad E_1 = \overline{D}_{n_{0}}.
\]
Hence $ \Gamma_{n_0} $ satisfies
\begin{enumerate}[(i)]
\item $ \Gamma_{n_0} \subset f^{n_0}(\kfmd) \cap (\C\setminus D_{n_0})$;
\item there is a point $ z_{n_0} \in \Gamma_{n_0} \cap \partial D_{n_0}$;
\item there is a point $ z'_{n_0} \in \Gamma_{n_0} $ such that $ f^n(z'_{n_0}) \in D_{n_0+n} $, for $ n \in \N$.
\end{enumerate}
Thus by Lemma~\ref{genctm} with $ r $ replaced by $ r', $ there exist a strictly increasing sequence $ (n_j)_{j \in \N_0} $ and a sequence of continua $(\Gamma_{n_j})_{j\in \N_0}$ such that, for each $j\in \N_0$,
\begin{enumerate}[(i)]
\item $ \Gamma_{n_j} \subset f^{n_j}(\kfmd) \cap (\C\setminus D_{n_j})$;
\item there is a point $ z_{n_j} \in \Gamma_{n_j} \cap \partial D_{n_j}; $
\item there is a point $ z'_{n_j} \in \Gamma_{n_j} $ such that  $ f^n(z'_{n_j}) \in D_{n_j+n} $, for $ n \in \N; $
\item $f^{n_{j+1}-n_j}(\Gamma_{n_j})\supset \Gamma_{n_{j+1}}$.
\end{enumerate}

{\it Step 3: Construction of a point in $\Gamma \cap \ifmd$ }.
We now apply Lemma~\ref{RSlemm} with
\[
E_j=\Gamma_{n_j}\quad \text{and} \quad m_j=n_{j+1}-n_j,\qfor j\in\N_0.
\]
By property~(iv) above,
\[
f^{m_j}(E_j)\supset E_{j+1},\qfor j\in \N_0,
\]
and we deduce from Lemma~\ref{RSlemm} that there exists $\zeta\in E_0=\Gamma_{n_0}$ such that
\[
f^{m_0+\cdots +m_k}(\zeta)\in E_{k+1}, \qfor k\in \N_0;
\]
that is,
\[
f^{n_{k+1}-n_0}(\zeta)\in \Gamma_{n_{k+1}}, \qfor k\in \N_0.
\]
Thus, by property~(i) of the sequence of continua $(\Gamma_{n_j})$,
\[
f^{n_{k+1}-n_0}(\zeta) \in \C \setminus D_{n_{k+1}}, \qfor k \in \N_0.
\]
It follows from \eqref{Gamman0} that there exists $ \zeta' \in \Gamma $ such that $ f^{n_0}(\zeta') = \zeta $, and hence
\[
f^{n_{k+1}}(\zeta') \in \C \setminus D_{n_{k+1}}, \qfor k \in \N_0.
\]
Therefore $\zeta' \in \ifmd$, which contradicts the fact that $\zeta'\in \Gamma \subset \kfmd$.

This completes the proof that $ f^n(K) \subset D_n, $ for $ n \geq N_0. $

Finally, if we choose $N_1=N_1(\alpha)$ such that
\[
\{f^k(\alpha):0\le k <N_0\}\subset D_{N_1}\;\;\text{and}\;\;D_{N_0}\subset D_{N_1}\subset D_{N_1+1},
\]
then $K\subset D_{N_1}$, for otherwise we can deduce, by repeatedly applying the minimum modulus property \eqref{minmodprop} to the continua $f^k(K)$, $k=0, \ldots, N_0-1$, that $f^{N_0}(K)$ meets $\partial D_{N_1}$, and hence meets $\partial D_{N_0}$.
%It follows that $ f^n(K) $ is bounded for such $ n $, and since $ f $ has no finite asymptotic values by (\ref{minmodprop}), $ K $ must also be bounded.
\end{proof}

We are now in a position to complete the proof of Theorem \ref{allweaksw}.  To do this, we make use of  the following characterisation of a disconnected subset of the plane.

\begin{lemma}\cite[Lemma 3.1]{R11}
\label{rempe}
A subset $ S $ of $ \C $ is disconnected if and only if there exists a closed, connected set $ \Omega \subset \C $ such that $ S \cap \Omega = \emptyset $ and at least two different components of $ \Omega^c $ intersect $ S. $
\end{lemma}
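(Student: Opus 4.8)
The plan is to prove the two implications separately; the ``if'' direction is elementary, so I dispose of it first. Suppose $\Omega$ is as described, and let $U\ne V$ be two components of $\C\setminus\Omega$, each meeting $S$. Since $\C$ is locally connected, the components of the open set $\C\setminus\Omega$ are open, so $U$ and $(\C\setminus\Omega)\setminus U$ are disjoint open sets whose union $\C\setminus\Omega$ contains $S$; the first meets $S$ by hypothesis, and the second meets $S$ because it contains $V$. Hence $S$ is disconnected.

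For the ``only if'' direction, suppose $S$ is disconnected, say $S=A\cup B$ with $A,B$ nonempty and $\overline A\cap B=A\cap\overline B=\emptyset$. First I would enclose $A$ and $B$ in the disjoint open sets $P:=\{z:d(z,A)<d(z,B)\}$ and $Q:=\{z:d(z,B)<d(z,A)\}$, so that $S\subset P\cup Q$ with each of $P,Q$ meeting $S$ (as $A\subset P$ and $B\subset Q$); fix $a\in P\cap S$. Let $W$ be the component of the open set $P\cup Q$ containing $a$. Then $W$ is open and connected, $W\subset P$, and $\partial W\cap S=\emptyset$: a point of $\partial W$ cannot lie in $P$, since $W$ is relatively closed in $P\cup Q$ (being a component of it), nor in the open set $Q$, which is disjoint from $W$, hence it lies outside $S\subset P\cup Q$. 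Since $W\subset P$ and $\partial W\cap S=\emptyset$, the nonempty set $S\cap Q$ is disjoint from $\overline W$; fix $c$ in $S\setminus\overline W$.

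The obvious candidate $\partial W$ for $\Omega$ need not be connected, so I would first ``round off'' $W$. Passing to $\hat{\C}$, let $V$ be the component of $\hat{\C}\setminus\overline W$ containing $c$, and let $\widehat W$ be the component of $\hat{\C}\setminus\overline V$ containing $a$ (here $a\notin\overline V$, since $\overline V\subset(\hat{\C}\setminus\overline W)\cup\partial W$ while $a\in W$, and $c\in V\subset\overline V$). A routine boundary chase gives $\partial\widehat W\subset\partial\overline V\subset\partial V\subset\partial W$, so that $\partial\widehat W\cap S=\emptyset$; similarly $\overline{\widehat W}\subset\hat{\C}\setminus V$, so $c\notin\overline{\widehat W}$. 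Now $\widehat W$ is a complementary component of the continuum $\overline V$ in the sphere, so $\partial\widehat W$ is a continuum --- this is the classical fact (a form of the unicoherence of $\hat{\C}$; see, e.g., Newman's book) that a complementary domain of a plane continuum has connected boundary. Putting $\Omega:=\partial\widehat W\cap\C$, the set $\Omega$ is closed and disjoint from $S$, and from the splitting $\hat{\C}\setminus\partial\widehat W=\widehat W\cup(\hat{\C}\setminus\overline{\widehat W})$ into disjoint open sets, with $a\in\widehat W$ and $c\in\hat{\C}\setminus\overline{\widehat W}$, it follows that $a$ and $c$ lie in distinct components of $\C\setminus\Omega$, each of which meets $S$.

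The one delicate point --- and the part I expect to be the main obstacle --- is the role of $\infty$: the argument above yields $\Omega\subset\C$ with $\Omega$ connected only when $\infty\notin\partial\widehat W$. In general I would need to verify that deleting $\infty$ from the separating continuum $\partial\widehat W$ neither destroys the separation of $a$ from $c$ within $\C$ (using the standard correspondence between complementary components in $\C$ and in $\hat{\C}$) nor prevents one from extracting a \emph{connected} closed subset of $\C$ still separating them; when $\overline S\ne\C$ the tidiest device is to precede the construction by an inversion that makes $S$ bounded, placing $\infty$ safely inside a complementary region so that the construction lands directly in $\C$. This point-at-infinity bookkeeping, together with pinning down the exact reference for the boundary-connectedness theorem, should be the only real work; everything else is routine point-set topology.
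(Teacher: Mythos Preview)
The paper does not prove this lemma at all: it is quoted verbatim from Rempe's paper \cite{R11} and used as a black box in the proof of Theorem~\ref{allweaksw}. There is therefore no ``paper's own proof'' to compare your attempt to.

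That said, your sketch is in the right spirit. The ``if'' direction is correct as written. For the ``only if'' direction, the construction of $P,Q,W$ and the passage to $\hat\C$ via the double-complement $\widehat W$ is the standard route, and the appeal to unicoherence of the sphere (so that $\partial\widehat W$ is a continuum) is the right key fact. However, the gap you flag at the end is genuine and not merely bookkeeping: your inversion device assumes $\overline S\neq\C$, and you give no argument for the case $\overline S=\C$, which can certainly occur (e.g.\ $S=\{z:\operatorname{Re}z\neq 0\}$). In that case one cannot move $\infty$ away from $\overline S$, and $\infty$ may well lie on $\partial\widehat W$; you then need to argue either that $\partial\widehat W\setminus\{\infty\}$ is still connected, or that some connected closed subset of it in~$\C$ still separates $a$ from $c$. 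Neither is automatic from what you have written. A cleaner way to organise the argument is to carry out the whole construction in $\hat\C$ and then, once you have the separating continuum $K\subset\hat\C\setminus S$, show that its intersection with $\C$ still does the job by taking the component of $\C\setminus K$ containing $a$ and checking its boundary in~$\C$ is connected (using that its closure in $\hat\C$ is a continuum whose complementary component containing $c$ again has connected boundary). Alternatively, consult Rempe's original proof in \cite{R11}, which handles exactly this technicality.
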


\begin{proof}[Proof of Theorem~\ref{allweaksw}]
It follows from Theorem~\ref{notlong} and another application of Lemma~\ref{Newman} that $ \kfm $ contains no unbounded closed connected set.  Therefore, to complete the proof that $ \ifm $ is a weak \sw, we must show that this set is connected.

Suppose that $ \ifm $ is disconnected.  Then, by Lemma~\ref{rempe}, there is a closed connected set $ E \subset \kfm $ such that at least two different components of $ E^c $ intersect $ \ifm $. Now $ E $ is bounded by Theorem~\ref{notlong}, so at least one such component of $E^c$, say $ G$, is bounded. Clearly, $G$ is simply connected, so $ \partial G $ is a continuum.

We now show that, as in the proof of Theorem~\ref{notlong}, we can select a certain continuum~$ \Gamma $ to act as the starting point for the construction of a sequence $ (\Gamma_{n_j}) $ of continua with the properties stated in Lemma \ref{genctm}, which leads to a contradiction.  The argument is identical to the proof of Theorem~\ref{notlong} except for the selection of this initial continuum $ \Gamma, $ which we now describe.

By the choice of $ G $ we have
\[ \partial G \subset \kfm \quad \textrm{and} \quad G \cap \ifm \neq \emptyset, \]
so there exist $ \alpha \in \partial G, \, \beta \in G $ and $ N = N(\alpha,\beta) \in \N $ such that
\[ f^n(\alpha) \in D_n, \qfor n \geq N,\]
and
\[ f^N(\beta) \in \C \setminus D_N. \]
Since $ f^N(\alpha) \in f^N(\partial G) $, whereas $ f^N(\beta) $ lies in a bounded complementary component of $ f^N(\partial G) $, it follows that $ f^N(\partial G) $ meets $ \partial D_N. $  We deduce, by applying Lemma~\ref{Newman} and using the fact that $\partial G \subset \kfm$, that the component $ \Gamma $ of $ f^N(\partial G) \cap \overline{D}_N $ that contains $\alpha':=f^N(\alpha) $ is a continuum satisfying
\[
\Gamma \subset f^N(\kfm),\;\; \Gamma \cap \partial D_N\ne \emptyset,\;\; \alpha'\in \Gamma \;\;\text{and } f^n(\alpha')\in D_{N+n},\qfor n\in \N_0.
\]
The proof now proceeds as from Step 2 in the proof of Theorem~\ref{notlong}, and leads to the conclusion that $ \Gamma \cap \ifmd \neq \emptyset $, which is a contradiction.  Thus we have shown that $ \ifm $ is connected, and hence that it is a weak \sw.

The proof that $\ifmh $ is also a weak \sw\ now follows fairly easily. First, this set contains $\ifm$, so there is no unbounded closed connected set in its complement. Thus we just need to show that $\ifmh$ is connected. However, for each $\ell\in\N_0$, the set $f^{-\ell}(\ifm)$ can be shown to be connected by minor modifications  of the above arguments  that $\ifm$ is connected, and so the connectedness of the nested union $\ifmh$ follows.

This completes the proof of Theorem~\ref{allweaksw}.
\end{proof}

In \cite[Theorem 3.1]{ORS}, we showed that $ \kfc $ is a weak \sw\ for functions satisfying more general conditions than (\ref{minmodprop}).  We remark that Theorem \ref{allweaksw} can be generalised in a similar way.  Suppose that $ f $ is a \tef\ and that there exists a sequence of bounded, \sconn\ domains  $ (D_n)_{n \in \N_0} $ such that
\begin{equation}
\label{conditiona}
f(\partial D_n) \text{ surrounds } D_{n+1}, \text{ for } n \in \N_0,
\end{equation}
and
\begin{equation}
\label{conditionb}
\text{every disc centred at } 0 \text{ is contained in } D_n \text{ for sufficiently large  } n.
\end{equation}
These domains $D_n$ generalise the discs given by \eqref{Dn}, which were used in the proof of Theorem~\ref{allweaksw}. Using these domains, we define the set
\[ I^{\!+\!}(f, (D_n)) = \{z: \textrm{there exists } (n_j) \textrm{ such that } f^{n_j}(z) \in \C \setminus D_{n_j}, \textrm{ for } j \in \N \}, \]
where $ (n_j) $ is a strictly increasing sequence of positive integers that, in general, depends on~$ z $.  Then the following result can be proved by making only slight changes to the proof of Theorem \ref{allweaksw}; we omit the details.

\begin{theorem}
\label{fullresult}
Suppose the \tef\ $ f $ and the sequence $ (D_n)_{n \in \N_0} $ of bounded, \sconn\ domains satisfy (\ref{conditiona}) and (\ref{conditionb}).  Then each of the sets
\[
I^{\!+\!}(f, (D_n)) \quad \text{and} \quad  \bigcup_{\ell \in \N_0} f^{-\ell}(I^{\!+\!}(f, (D_n)))
\]
is a weak \sw.
\end{theorem}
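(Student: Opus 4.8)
The plan is to re-run the entire argument of Theorem~\ref{allweaksw} with the discs $D_n=\{z:|z|<m^n(r)\}$ replaced throughout by the given bounded, simply connected domains $D_n$, and to check that every place where the special structure of the $D_n$ was used only needed the two abstract properties \eqref{conditiona} and \eqref{conditionb}. First I would record the abstract analogue of Theorem~\ref{notlong}: defining $K(f,(D_n))$ to be the complement of $I^{\!+\!}(f,(D_n))$, that is, the set of $z$ with $f^n(z)\in D_n$ for all large $n$, I claim that if $\alpha\in K(f,(D_n))$ with $f^n(\alpha)\in D_n$ for $n\ge N_0$, and $K\subset K(f,(D_n))$ is a continuum containing $\alpha$, then $f^n(K)\subset D_n$ for $n\ge N_0$ and $K$ is bounded. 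The proof is verbatim that of Theorem~\ref{notlong}: the only properties of the old discs that entered were (i) the ``surrounding'' relation $f(\partial D_n)\subset\C\setminus D_{n+1}$, which was equation \eqref{surr} and is now exactly hypothesis \eqref{conditiona}; (ii) that $f^n(K)\cap\partial D_n\ne\emptyset$ whenever $f^n(K)$ meets both $D_n$ and its exterior, which follows from Lemma~\ref{Newman} and needs only that $D_n$ is open and bounded; and (iii) the final paragraph producing $N_1$, where ``every disc centred at $0$ lies in $D_n$ for large $n$'' replaces the old observation that $D_{N_0}\subset D_{N_1}$ for $N_1$ large — this is hypothesis \eqref{conditionb}. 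The inductive engine, Lemma~\ref{genctm}, likewise goes through unchanged, since it too used only \eqref{surr} (now \eqref{conditiona}), the fact that the $D_n$ are open and bounded, and Lemma~\ref{Newman}.

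Next I would prove that $I^{\!+\!}(f,(D_n))$ is a weak spider's web. That its complement $K(f,(D_n))$ contains no unbounded closed connected set is immediate from the boundedness assertion in the abstract analogue of Theorem~\ref{notlong} (together with Lemma~\ref{Newman}, exactly as at the start of the proof of Theorem~\ref{allweaksw}). For connectedness I argue by contradiction using Lemma~\ref{rempe}: a disconnection yields a bounded, simply connected complementary component $G$ with $\partial G\subset K(f,(D_n))$ and $G\cap I^{\!+\!}(f,(D_n))\ne\emptyset$, so there are $\alpha\in\partial G$, $\beta\in G$ and $N$ with $f^n(\alpha)\in D_n$ for $n\ge N$ and $f^N(\beta)\in\C\setminus D_N$. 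Since $f^N(\beta)$ lies in a bounded complementary component of the continuum $f^N(\partial G)$ while $f^N(\alpha)$ lies on it, the continuum $f^N(\partial G)$ meets $\partial D_N$; taking $\Gamma$ to be the component of $f^N(\partial G)\cap\overline{D}_N$ through $f^N(\alpha)$ gives the initial continuum, and from here the proof is word-for-word Steps 2 and 3 of the proof of Theorem~\ref{notlong} (with the discs relabelled $D_{n-N}$ and the new domains in the role of the old discs), producing a point of $\Gamma\cap I^{\!+\!}(f,(D_n))$ — a contradiction. Finally, that $\bigcup_{\ell\in\N_0}f^{-\ell}(I^{\!+\!}(f,(D_n)))$ is a weak spider's web follows as in the last paragraph of the proof of Theorem~\ref{allweaksw}: it contains $I^{\!+\!}(f,(D_n))$, so its complement has no unbounded closed connected set, and each $f^{-\ell}(I^{\!+\!}(f,(D_n)))$ is connected by the same Lemma~\ref{rempe} argument with $\Gamma$ chosen from $f^{N+\ell}(\partial G)$, whence the nested union is connected.

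The one point requiring genuine care — and what I expect to be the main obstacle — is verifying that the relabelling step (Step 2 of the proof of Theorem~\ref{notlong}) is legitimate in the abstract setting. There, after truncating at level $N$, one writes $D_{n-N}$ for the disc $D_n$ and $r'=m^N(r)$, and the whole machinery is applied to the shifted sequence. For general domains one must instead simply apply Lemma~\ref{genctm} to the tail sequence $(D_{N+n})_{n\in\N_0}$ directly; this is harmless because \eqref{conditiona} is preserved under shifting the index, and \eqref{conditionb} is only needed at the very end (for the boundedness statement), where ``every disc centred at $0$ lies in $D_n$ for large $n$'' already accounts for any finite shift. Once one checks that the hypotheses of Lemma~\ref{genctm} are closed under such shifts — which is immediate — every other step is a mechanical transcription, so I would state Theorem~\ref{fullresult} with the remark, as the authors do, that ``only slight changes'' to the proof of Theorem~\ref{allweaksw} are needed, the substantive changes being precisely the replacement of \eqref{surr} by \eqref{conditiona} and of the disc-nesting observation by \eqref{conditionb}.
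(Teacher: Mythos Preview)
Your proposal is correct and is exactly the approach the paper intends: the paper itself merely states that Theorem~\ref{fullresult} ``can be proved by making only slight changes to the proof of Theorem~\ref{allweaksw}; we omit the details,'' and you have supplied precisely those details. One small imprecision: condition~\eqref{conditiona} (that $f(\partial D_n)$ \emph{surrounds} $D_{n+1}$) is formally stronger than the inclusion $f(\partial D_n)\subset\C\setminus D_{n+1}$ that replaced~\eqref{surr}, but as you implicitly use, only this weaker consequence is needed in Lemma~\ref{genctm} and in the analogue of Theorem~\ref{notlong}, so no harm is done.
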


We conclude this section by using Theorem \ref{fullequiv} to show that if \eqref{minmodprop} holds and there exists an invariant curve under~$f$, which tends to $\infty$, then this curve must contain a point of $I(f)$.

\begin{theorem}\label{inv}
Let $ f $ be a \tef\ such that condition (\ref{minmodprop}) holds and let~$ \Gamma $ be a simple curve tending to $\infty$ and invariant under $ f $. Then $ \Gamma \cap I(f) \neq \emptyset. $
\end{theorem}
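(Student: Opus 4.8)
The plan is to push the problem onto the half\-line and then invoke Theorem~\ref{fullequiv}. Fix a parametrisation $\gamma\colon[0,\infty)\to\C$ of $\Gamma$ with $\gamma(t)\to\infty$ as $t\to\infty$ (if $\Gamma$ is naturally parametrised by $\R$ rather than $[0,\infty)$, a minor variant of what follows applies, or one restricts to a forward\-invariant sub\-arc). Since $\gamma$ is a proper continuous injection into $\C$, it is a homeomorphism onto $\Gamma$, so $\gamma^{-1}\colon\Gamma\to[0,\infty)$ is continuous; and since $\Gamma$ is invariant we have $f(\Gamma)\subset\Gamma$, so the map $g:=\gamma^{-1}\circ f\circ\gamma\colon[0,\infty)\to[0,\infty)$ is well defined, continuous, and satisfies $\gamma(g^n(t))=f^n(\gamma(t))$ for all $n\in\N_0$ and $t\ge0$. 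As $\gamma(s)\to\infty$ when $s\to\infty$, any $t_*>0$ with $g^n(t_*)\to\infty$ yields $f^n(\gamma(t_*))=\gamma(g^n(t_*))\to\infty$, hence $\gamma(t_*)\in\Gamma\cap I(f)$. Thus it suffices to produce an escaping point of $g$, and for this I would verify condition~(c) of Theorem~\ref{fullequiv} for $g$, namely that $\widetilde{g}(t):=\max_{0\le s\le t}g(s)$ exceeds $t$ for all large $t$.

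Put $\rho_t:=\max_{0\le s\le t}|\gamma(s)|$, which is continuous, non\-decreasing and tends to $\infty$; by the intermediate value theorem $|\gamma|$ attains on $[0,t]$ every value in $[\,|\gamma(0)|,\rho_t\,]$. By \eqref{minmodprop} and Theorem~\ref{fullequiv} there is $R>0$ with $\mt(r)>r$ for $r\ge R$, and $\mt(\rho_t)=m(\sigma)$ for some argmax $\sigma\in[0,\rho_t]$. Since $\mt(\rho_t)>\rho_t\to\infty$ while $m$ is bounded on bounded sets, once $t$ is large enough (so that $\rho_t$ exceeds $\max_{0\le\sigma\le|\gamma(0)|}m(\sigma)$) every such argmax $\sigma$ satisfies $\sigma\ge|\gamma(0)|$; fix such a $\sigma=\sigma_t$ and choose $s_t\in[0,t]$ with $|\gamma(s_t)|=\sigma_t$. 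Then
\[
|f(\gamma(s_t))|\ \ge\ m(|\gamma(s_t)|)\ =\ \mt(\rho_t)\ >\ \rho_t\ =\ \max_{0\le s\le t}|\gamma(s)|,
\]
so, writing $f(\gamma(s_t))=\gamma(u_t)$ (legitimate because $f(\Gamma)\subset\Gamma$), the point $\gamma(u_t)$ has modulus larger than that of every $\gamma(s)$ with $s\le t$, which forces $u_t>t$. Hence $\widetilde{g}(t)\ge g(s_t)=u_t>t$ for all sufficiently large $t$, which is condition~(c). Theorem~\ref{fullequiv} then gives $t_*>0$ with $g^n(t_*)\to\infty$, and therefore $\gamma(t_*)\in\Gamma\cap I(f)$, as required.

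The substance of the argument — and the only place the minimum modulus enters — is the displayed inequality: it is essential to bound $|f(\gamma(s_t))|$ below not merely by a large quantity but by the \emph{maximum} modulus $\rho_t$ on the whole initial arc $\gamma([0,t])$, since the curve may wind so that a point of large modulus sits at a small parameter value. The running maximum $\mt$ supplied by \eqref{minmodprop} is exactly what delivers this. The only point requiring care is ensuring that the relevant argmax of $m$ on $[0,\rho_t]$ lies in the range of $|\gamma|$ on $[0,t]$, i.e.\ is at least $|\gamma(0)|$; this is where one uses that $\mt(\rho_t)>\rho_t\to\infty$ prevents that argmax from remaining in a bounded set.
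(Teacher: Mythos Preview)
Your proof is correct and follows essentially the same route as the paper: conjugate $f|_\Gamma$ to a continuous self-map of $[0,\infty)$ via a parametrisation and verify condition~(c) of Theorem~\ref{fullequiv} for that map. The only organisational difference is that the paper discretises via the ladder $\{m^n(r)\}$ (choosing, for each $t$, the maximal $N$ with a point of parameter $\le t$ on the circle $|z|=m^N(r)$), whereas you work directly with the running maximum $\rho_t$ and the argmax of $m$ on $[0,\rho_t]$; both devices serve the same purpose of locating a point on the initial arc whose image is forced, by the minimum modulus, to lie beyond that arc.
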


\begin{proof}
Let $ \psi: \Gamma \to [0, \infty) $ be a homeomorphism and define
\[
\varphi(t) = \psi \circ f \circ \psi^{-1} (t) \qand \widetilde{\varphi}(t) := \max_{0 \leq s \leq t} \varphi(s), \qfor t \in [0, \infty).
\]
We claim that there exists $t_0$ such that $\varphi^n(t_0) \to \infty$ as $n \to \infty$.  The theorem follows from the claim since, if $ z_0 = \psi^{-1}(t_0) $, then
\[f^n(z_0) = f^n(\psi^{-1}(t_0)) = \psi^{-1}(\varphi^n(t_0)) \to \infty  \qas n \to \infty, \]
so $ z_0 \in  \Gamma \cap I(f). $

To prove the claim we use Theorem~\ref{fullequiv} to choose $ r \geq R > 0 $ such that
\[ m^n(r) \textrm{ and } \mt^n(R) \textrm{ increase strictly with } n \textrm{ to } \infty, \]
and
\[ m^n(r) \in [ \, \mt^n(R) , \mt^{n+1}(R) \, ], \qfor n \in \N_0. \]
Without loss of generality, we can assume that $ \Gamma $ meets each of the circles $ \{z: |z| = m^n(r) \} $, for $ n \in \N_0. $ Then there exists $T \geq 0 $ such that, for each $ t \geq T, $
\begin{align*}
m^n(r) \leq | \psi^{-1}(t)| < m^{n+1}(r), \qfor \textrm{some } n \in \N_0.
\end{align*}
Moreover, for $t\ge T$, there exists $ z_t \in \Gamma $ and a maximal $ N \in \N_0 $ such that
\begin{align}
\label{maxn}
|z_t| = m^N(r) \qand \psi(z_t) \leq t.
\end{align}
Then
\[ |f(z_t)| \geq m^{N+1}(r) \]
and also, because $\Gamma$ is invariant under~$f$ and~$N$ is maximal for \eqref{maxn},
\[ \widetilde{\varphi}(t) \geq \varphi(\psi(z_t)) = \psi(f(z_t)) > t. \]
Thus we have shown that $ \widetilde{\varphi}(t) > t $ for $ t \geq T $. Hence, by Theorem~\ref{fullequiv} there exists $ t_0>0 $ such that $ \varphi^n(t_0) \to \infty $ as $ n \to \infty $.  This completes the proof.
\end{proof}
\begin{remark*}
It follows from the result of Short and Sixsmith \cite[Theorem 1.6]{SS}, mentioned in Section~2, that the conclusion of Theorem~\ref{inv} can be strengthened to state that $\Gamma$ contains uncountably many points in $I(f)$.
\end{remark*}

\section{Unbounded Fatou components}
\label{unbFat}
\setcounter{equation}{0}
In this section we prove Theorems~\ref{fatou} and~\ref{BD}  which, for a \tef\ $f$ for which the condition (\ref{minmodprop}) holds, concern the relationship between {\it unbounded} Fatou components and the sets $\vfp$ and $V(f)$.  We begin by noting that if a point $z$ belongs to a Fatou component of $f$ that lies in an attracting or parabolic basin, or a Siegel disc (or a preimage of a Siegel disc), then the orbit of~$z$ must be bounded. Thus any Fatou component of $f$ that meets $\vfp$ must be a Baker domain (or a preimage of a Baker domain) or a wandering domain.

Recall that, if $ U = U_0 $ is a Fatou component, then $ f^n(U) \subset U_n $ for some Fatou component $ U_n $, for each $ n \in \N $.  A Fatou component of a \tef\ is called a \textit{\wand} if it is not eventually periodic, that is, if $ U_m \neq U_n $ for $ m \neq n $, and it is called a {\it Baker domain} if it is periodic and $f^n(z) \to \infty$ as $n \to \infty$ for $z \in U$. Note that Baker domains of {\tef}s are always unbounded whereas wandering domains can be either bounded or unbounded. (For a full description of the possible types of Fatou components of a \tef\ see, for example,~\cite{wB93}.)

\begin{proof}[Proof of Theorem \ref{fatou}]
(a) Let $f$ be a \tef\ satisfying the condition \eqref{minmodprop} and let~$U$ be an unbounded Fatou component of~$f$. By  Theorem~\ref{fast}  we can take $r\ge R>0$ such that
\[
m^n(r)\ge \mt^n(R),\qfor n\in\N_0,\quad\text{and}\quad \mt^n(R)\to\infty \;\;\text{as } n\to \infty.
\]
Then we can use Theorem~\ref{notlong} to show that $U \cap \vfp \neq \emptyset$. Indeed, if this is false, then  $U\subset K(f,(m^n(r)))$ and, by Theorem~\ref{notlong}, for any $\alpha\in U$ there exists $N_1=N_1(\alpha)$ such that any continuum $K$ in $U$ that contains~$\alpha$ must lie in $D_{N_1}$, which is a contradiction.  Therefore, we must have $U \cap \vfp \neq \emptyset$. As noted above, this implies that~$U$ is either a Baker domain (or a preimage of a Baker domain) or a wandering domain.

The second statement of part~(a)  follows from Theorem~\ref{Vprops} part~(c).

(b) Zheng's result~(b) before the statement of Theorem~\ref{fatou} states that if
\[
\limsup_{r \to \infty} \frac{m(r)}{r} = \infty,
\]
then any unbounded component $ U $ of $ F(f) $ must be a \wand. This condition is clearly satisfied if $\lim_{r \to \infty} \mt(r) / r = \infty$. The fact that such a wandering domain must be in $\vfp$ follows from part~(a).

(c) The fact that $U$ is a wandering domain in $\vfp$ follows from part~(b), and $\vfp\subset Z^+(f)$ because the hypothesis about $\mt(r)$ implies that,  for sufficiently large $ r, $
\[
\frac{\log^+ \log^+ \mt^n(r)}{n}\to \infty\;\;\text{as}\;\; n\to \infty,
\]
by Lemma \ref{suffcon}.
\end{proof}

Finally in this section, we prove our result about the rate of escape in certain Baker domains.

\begin{proof}[Proof of Theorem \ref{BD}]
(a) Suppose that \eqref{minmodprop} holds and $U$ is an invariant Baker domain of the \tef\ $f$. By Theorem~\ref{fullequiv}, we can take $ r \ge R > 0 $ such that
\begin{equation}\label{mandmt}
m^n(r)\ge \mt^n(R),\qfor n\in\N_0,\quad\text{and}\quad \mt^n(R)\to\infty \;\;\text{as } n\to \infty.
\end{equation}
and
\[ m^n(r) \textrm{ increases strictly with } n \;(\textrm{to } \infty). \]
We now take $\alpha\in U$ and let $\Gamma$ be a compact curve in $U$ joining $\alpha$ to $f(\alpha)$. Moreover, we assume without loss of generality (by choosing $\alpha$ suitably and replacing~$r$ by some iterate $m^n(r)$ if necessary) that
\[
 |\alpha| \leq r < |f(\alpha)|.
\]
Now let $n_0$ denote the largest value of $n \in \N_0$ such that
\begin{equation}
 \label{n0}
 \Gamma \cap \{ z: |z| = m^{n}(r) \} \neq \emptyset.
 \end{equation}
 Then
 \begin{equation}
 \label{f1}
f(\Gamma) \cap \{z: |z| \geq m^{n_0+1}(r)\} \neq \emptyset.
 \end{equation}
 Also, $f(\alpha) \in \Gamma \cap f(\Gamma)$, so  by the definition of $n_0$ and the fact that $(m^n(r))$ is strictly increasing, we have
 \begin{equation}
 \label{f2}
f(\Gamma) \cap \{z: |z| < m^{n_0+1}(r)\} \neq \emptyset.
 \end{equation}
It follows from \eqref{f1} and \eqref{f2} that, if we let $n_1$ denote the largest value of $n \in \N_0$ such that
 \begin{equation}
 \label{n1}
f(\Gamma) \cap \{ z: |z| = m^{n}(r) \} \neq \emptyset,
 \end{equation}
 then $n_1 \geq n_0 + 1 \geq 1$.

By repeating this process we find that, for each $k \in \N_0$, there exists $n_k \geq k$ such that
\begin{equation}
 \label{nk}
 f^k(\Gamma) \cap \{ z: |z| = m^{n_k}(r) \} \neq \emptyset.
\end{equation}

It now follows from Lemma~\ref{dist} that there exists $C=C_{\Gamma} > 1$ such that, for each $z \in \Gamma$ and each $k \in \N_0$,
\[
 |f^k(z)| \geq m^k(r) / C.
\]
Finally, by applying Lemma~\ref{dist} again, we deduce from \eqref{mandmt} that for any $z \in U$ there exists $C(z) > 1$ such that, for each $n \in \N$,
\[
 |f^n(z)| \geq m^n(r) / C(z)\ge \mt^n(R)/C(z).
\]
(b) If we also have
\[
\liminf_{r\to\infty} \frac{\mt(r)}{r} >1,
\]
then, for any $z\in U$ there exists $k=k(C(z))\in \N$ such that
\[
\mt^{n+k}(R)\ge C(z)\mt^n(R),\qfor n\in \N_0,
\]
where $R$ was defined in part~(a),
and hence
\[
|f^{n+k}(z)| \ge \mt^n(R),\qfor n\in \N_0;
\]
that is, $z\in V(f)$.
\end{proof}

\section{Examples}
\label{examples}
\setcounter{equation}{0}

In this section we illustrate our results with examples of fairly simple entire functions, some of which satisfy \eqref{minmodprop} and some of which do not.

Our first example is a function $ f $ of order $ 1 $ which does not belong to the classes of functions covered by Theorem \ref{funcs}.   We show that this function satisfies condition (\ref{minmodprop}) and has an invariant Baker domain that lies in~$ V(f) $.

\begin{example}
\label{Exfirst}
Let $ f $ be the function \[ f(z) = 2z(1+e^{-z}). \]  Then
\begin{enumerate}[(a)]
\item there exists $ r>0 $ such that $ m^n(r) \to \infty $ as $ n \to \infty; $
\item $ f $ has an invariant Baker domain that lies in $ V(f). $
\end{enumerate}
\end{example}

\begin{proof}
Put $ z = re^{i\theta}, $ so
\[
f(re^{i\theta}) = 2r e^{i\theta} ( 1 + e^{-r \cos \theta} e^{-ir \sin \theta} ),\]
and therefore
\begin{align*}
\mu(r, \theta) : = | f(re^{i\theta}) |^2 & = 4r^2 \left( (1 + e^{-r \cos \theta}\cos(r \sin \theta))^2 + (e^{-r \cos \theta}\sin(r \sin \theta))^2 \right)\\
& = 4r^2 \left( 1 + 2e^{-r \cos \theta}\cos(r \sin \theta) + e^{-2r \cos \theta} \right)\\
& = 4r^2 (1 - e^{-r \cos \theta})^2 + 8r^2e^{-r \cos \theta}(1 + \cos(r \sin \theta)).
\end{align*}
Note that both terms in the last line are non-negative for all values of~$ r $ and~$ \theta. $

Now let $ r_n = 2n \pi, $ for $ n \geq 2, $ so $ r_n \to \infty $ as $ n \to \infty $, and $ r_{n+1} \leq \dfrac{3r_n}{2}.$  We claim that
\begin{equation*}
\mu(r_n, \theta) \geq \left( \dfrac{3r_n}{2} \right)^2 \geq r_{n+1}^2, \qfor n \geq 2,
\end{equation*}
from which it follows that $ m(r_n) \geq 3r_n/2 \geq r_{n+1} $ for $ n \geq 2 $, and this proves part~(a) by Theorem \ref{fullequiv}.

To prove the claim, first observe that, if $ e^{-r \cos \theta } \leq \frac{1}{4} $ or $ e^{-r \cos \theta } \geq \frac{7}{4} $, then
\begin{align*}
\mu(r, \theta) \geq 4r^2(1 - e^{-r \cos \theta})^2 \geq 4r^2 \left( \dfrac{3}{4} \right)^2 = \left( \dfrac{3r}{2} \right)^2.
\end{align*}
Suppose therefore that $\frac{1}{4} < e^{-r \cos \theta } < \frac{7}{4}. $  Then
\[ - \dfrac{1}{r}\log \frac{7}{4} < \cos \theta < - \dfrac{1}{r}\log \frac{1}{4}, \]
so we can put $\theta = \tfrac{1}{2}\pi + \varepsilon(r)$,  where $ |\varepsilon(r)| \leq C/r $ for some positive absolute constant $ C. $
Thus, for $ n \geq 2 $ we have
\begin{align*}
r_n \sin \theta & = r_n \left( 1 - \dfrac{\textit{O}(1)}{r_n^2} \right ) = 2n\pi - \dfrac{\textit{O}(1)}{2n\pi},
\end{align*}
and hence
\[ \cos (r_n \sin \theta) = 1 - \dfrac{\textit{O}(1)}{(2n\pi)^2}. \]
It follows that, for large $ n $,
\begin{align*}
\mu(r_n, \theta) \geq 8r_n^2e^{-r_n \cos \theta}(1 + \cos(r_n \sin \theta))
\geq 2r_n^2\left( 2 - \dfrac{\textit{O}(1)}{(2n\pi)^2} \right) \geq \left( \dfrac{3r_n}{2} \right)^2,
\end{align*}
and this completes the proof of the claim, and of part (a).

That $ f $ has an invariant Baker domain $ U $ follows from \cite[Theorem 2]{RS99}, which describes a large family of entire functions with Baker domains, including this  function.   Since we have just shown that for the sequence $ r_n=2n\pi $ we have $ m(r_n) \geq 3r_n/2 $, it follows that $ \liminf_{r \to \infty} \mt(r)/r > 1 $ and thus that $ U \subset V(f) $ by Theorem~\ref{BD} part~(b).
\end{proof}

Our next example concerns the \tef\ $ f(z) = z + 1 + e^{-z} $, first investigated by Fatou \cite{F} and often named after him.  For this function, it is known that $ F(f) $ is a completely invariant Baker domain, that $ I(f) $ is a \sw\ but $ A(f) $ is not, and that $ f $ is \spl; see \cite[Theorem~1.1]{VE} and \cite[Example~5.4]{O12a} for these results and an explanation of the terminology.  We use Theorem~\ref{fullequiv} to show that, nevertheless, condition (\ref{minmodprop}) does not hold for this function~$ f. $

\begin{example}
Let $ f $ be the Fatou function, \[ f(z) = z + 1 + e^{-z}. \]  Then there does not exist $ r>0 $ such that $ m^n(r) \to \infty $ as $ n \to \infty.$
\end{example}

\begin{proof}
We will show that $ \mt(r) < r $ for arbitrarily large $ r $, from which the result follows by Theorem \ref{fullequiv}.

Consider the images under $ f $ of points $ ir, \, r>0. $ As $ r $ increases, the image points travel clockwise around a circle of radius $ 1 $, whose centre is at the same time moving up the line $ \textrm{Re } z = 1.$   Clearly
\[ |f(ir)| = r, \qfor r = (2k+1)\pi, \quad k \in \N, \]
and it is easy to see that there exists $ \varepsilon_k > 0 $, with $ \varepsilon_k \to 0 $ as $ r \to \infty, $ such that
\begin{align}
\label{fat1}
m(r) \leq |f(ir)| \leq r, \qfor 2k\pi + \varepsilon_k \leq r \leq (2k+1)\pi, \quad k \in \N.
\end{align}
Moreover, for $ r>0, $ we have
\begin{align}
\label{fat2}
|f(ir)| \leq r + 1 + |e^{-ir}| \leq r + 2.
\end{align}
Now let $ r_k = (2k+1)\pi - \delta_k $, where $ \delta_k>0 $.  Then for $ \delta_k $ sufficiently small we have
\begin{align*}
m(s) \leq r_k, \qfor 2k\pi + \varepsilon_k \leq s \leq r_k,
\end{align*}
by (\ref{fat1}), and
\begin{align*}
\max {\{m(s): 0 \leq s \leq 2k\pi + \varepsilon_k \}} \, \leq \, 2k\pi + \varepsilon_k + 2 \, < \, r_k,
\end{align*}
by (\ref{fat2}). Thus we have shown that
\[ \mt(r_k) = \max {\{m(s): 0 \leq s \leq r_k \}} < r_k, \qfor  k \in \N, \]
and this completes the proof.
\end{proof}
We remark that a similar argument shows that \eqref{minmodprop} fails for every function of the form $f(z)=z+a+be^{-z},$ where $a,b>0$.

The following example shows that the condition \eqref{minmodprop} can hold even if we have
\begin{equation}\label{limit1}
\lim_{r\to\infty}\frac{\mt(r)}{r}=1.
\end{equation}

\begin{example}
\label{z+bsinz}
Condition \eqref{minmodprop} holds for any function of the form
\[
f(z)=z+b\sin z,\quad\text{where } b>2\pi.
\]
\end{example}

\begin{proof} We claim that such a function~$f$ has the property that if $r_n=2n\pi+\pi/2$, where $n\in \N$, then
\[
m(r_n)\ge r_n+2\pi = r_{n+1}, \quad \text{for sufficiently large } n,
\]
from which it follows that condition (\ref{minmodprop}) holds, by Theorem~\ref{fullequiv}.  It is also clear that \eqref{limit1} holds for functions of this form.

To prove the claim, suppose first that $z=x+iy$, where $x,y \ge 0$, $|z|=r_n$, for some $n\in\N$, and $y\ge \log (3r_n)$. Then  clearly
\[
\sin^2 x+\sinh^2y \ge \sinh^2y \ge r_n^2,
\]
and so
\[
|\sin z|\ge |z|.
\]
Hence,  for such $z$ we have
\begin{equation}\label{est1}
|f(z)|\ge b|\sin z|-|z| \ge 3|\sin z|-|z| \ge 2|z| \ge |z|+2\pi.
\end{equation}

Next suppose that $z=x+iy$, where $x,y \ge 0$ and $|z|=r_n$, for some  $n\in\N$, and $y\le \log (3r_n)$. Then
\[
x^2=r_n^2-y^2 \ge r_n^2-(\log(3r_n))^2 = r_n^2\left(1-\left(\frac{\log(3r_n)}{r_n}\right)^2\right),
\]
so
\[
r_n\ge x\ge r_n\left(1-\left(\frac{\log(3r_n)}{r_n}\right)^2\right)^{1/2}\ge r_n-\frac{(\log(3r_n))^2}{r_n}.
\]
Thus, for such $z$, we have (since $r_n=2n\pi+\pi/2$)
\[
\textrm{Re\,}(\sin z) = \sin x\cosh y \ge \sin x \ge 1- \frac{(\log(3r_n))^2}{r_n},
\]
from which it follows that
\begin{equation}\label{est2}
|f(z)|=|z+b\sin z|\ge r_n+b-o(1) \quad\text{as } n\to \infty,
\end{equation}
uniformly for such $z$.

The claim about $m(r_n)$ now follows from \eqref{est1} and \eqref{est2}, together with the symmetry properties of the sine function.
\end{proof}

It follows from Theorem \ref{funcs} that condition (\ref{minmodprop}) always holds for \tef s of order less than $ 1/2 $. Our final example shows that this condition may or may not hold for \tef s of order $ 1/2 $.

\begin{example}
\label{orderhalf}
\begin{enumerate}[(a)]
\item Condition (\ref{minmodprop}) does not hold for the function $ f(z) = \cos \sqrt z $.
\item Condition (\ref{minmodprop}) holds for the function $ g(z) = 2z \cos \sqrt z $.
\end{enumerate}
\end{example}

\begin{proof}
(a) Since $ f $ is bounded on the positive real axis, it follows that $ m(r) $ is bounded and thus that there is no $ r>0 $ such that $ m^n(r) \to \infty $ as $ n \to \infty. $

(b) First note that the minimum modulus of $ g $ is attained on the positive real axis for every $ r>0 $. This can be deduced, for example, from the fact that~$g$ is an entire function of order $1/2$ with its zeros on the positive real axis, and so (see \cite{Ti}) can be written in the form
\[
g(z)=2z\prod_{n=0}^{\infty}\left(1-\frac{z}{(n+1/2)^2\pi^2}\right).
\]
Now suppose that $ r \geq 9\pi^2. $ Then  $ r \in [n^2\pi^2, (n+1)^2\pi^2 ) $  for some integer $ n \geq 3 $, and it follows that
\[ \mt(r) \geq m(n^2\pi^2) = 2n^2\pi^2 > (n+1)^2\pi^2  > r.  \]
Thus we have shown that $ \mt(r) > r $ for $ r \geq 9\pi^2, $ so it follows from Theorem~ \ref{fullequiv} that condition (\ref{minmodprop}) is satisfied.
\end{proof}

\begin{remarks*}\normalfont
1. Example~\ref{orderhalf} can be modified to apply to \tef s of any positive integer order $ p $, giving that condition (\ref{minmodprop}) is not satisfied for the function $ f(z) = \cos z^p $, but is satisfied for the function $ g(z) = 2z \cos z^p $.

2. The case of functions of order $ 1/2 $, \emph{minimal type}, mentioned in the statement of Baker's conjecture, is rather delicate.  In forthcoming work we will show, using a method pioneered by Kjellberg \cite[page~821]{wH89}, that there are examples of such functions that do satisfy condition (\ref{minmodprop}) and other examples that do not.
\end{remarks*}

\end{document}